\DeclareMathSymbol{\shortminus}{\mathbin}{AMSa}{"39}
\tikzset{
dot/.style={draw,fill,circle,inner sep = 0pt,minimum size = 3pt},
bigdot/.style={dot,minimum size = 4pt},
terminal/.style={draw,circle, inner sep=2.5pt},
vcolour/.style={draw,inner sep=1.5pt,font=\scriptsize,label distance=2pt},
22box/.style={draw,minimum width=2cm,minimum height=1.5cm,font=\LARGE,node contents=\( \Downarrow \)},
22boxsmall/.style={22box,minimum width=1.5cm, minimum height=1cm},
Vset/.style={draw=black!30,ellipse,minimum width=1.5cm,minimum height=2.5cm},
petal/.style={
    decoration = {markings, mark=at position 1.0 with {
\coordinate (centre) at (0,0);
\path (centre)+(120:2) node(v)[bigdot]{};
\path (centre)+(240:2) node(u)[bigdot]{};
\path (centre)+(0:2) node(w)[bigdot]{};
\draw (v)--(u)--(w);\draw (v) arc (120:-120:2);
\path (v)--(u) node(v1)[pos=0.4][bigdot]{} node(u-v)[pos=0.6][bigdot]{};
\path (u)--(w) node(u-w)[pos=0.4][bigdot]{} node(w-u)[pos=0.6][bigdot]{};
\draw (w)--(v) node(w-v)[pos=0.4][bigdot]{} node(x1)[pos=0.6][bigdot]{};
\draw (u-w)--(u-v)--node[pos=0.3][bigdot]{} (w-v)--(w-u)--node[pos=0.3][bigdot]{} (v1)--(x1)--node[pos=0.3][bigdot]{} (u-w);
}},
    postaction = {decorate}
    },
}
\newtheorem{theorem}{Theorem}\newtheorem{corollary}{Corollary}\newtheorem{lemma}{Lemma}\newtheorem{observation}{Observation}\newtheorem{conjecture}{Conjecture}\newtheorem{problem}{Problem}
\newtheoremstyle{freethm}{3pt}{3pt}{}{}{\bfseries}{.\\}{.5em}{\thmname{#1}\thmnumber{ #2}\thmnote{ (#3)}}
\theoremstyle{freethm}
\newtheorem{construct}{Construction}
\newtheoremstyle{thmpart}{3pt}{3pt}{}{}{\bfseries}{:}{.5em}{\thmname{#1}\thmnumber{ #2}\thmnote{ (#3)}}
\theoremstyle{thmpart}
\newtheorem{claim}{Claim}\theoremstyle{plain}
\theoremstyle{freethm}
\newcommand{\pagetarget}[2]{\phantomsection \label{#1}\hypertarget{#1}{#2}}
\title{Hardness Transitions and Uniqueness of Acyclic Colouring}
\author{Shalu M.A., and Cyriac Antony\\
Indian Institute of Information Technology, Design \& Manufacturing\\ (IIITDM) Kancheepuram, Chennai, India}
\date{}
\begin{document}

\maketitle

\providetoggle{forThesis}

\providetoggle{extended}

\providetoggle{iiitdmV} 

\begin{abstract}
For \( k\in \mathbb{N} \), a \( k\)-acyclic colouring of a graph \( G \) is a function \( f\colon V(G)\to \{0,1,\dots,k-1\} \) such that (i)~\( f(u)\neq f(v) \) for every edge \( uv \) of \( G \), and (ii)~there is no cycle in \( G \) bicoloured by \( f \). 
For \( k\in \mathbb{N} \), the problem \textsc{\( k \)\nobreakdash-Acyclic Colourability} takes a graph \( G \) as input and asks whether \( G \) admits a \( k \)-acyclic colouring. 
Ochem (EuroComb 2005) proved that \textsc{3-Acyclic Colourability} is NP-complete for bipartite graphs of maximum degree~4. 
Mondal et al.\ (J.~Discrete Algorithms, 2013) proved that \textsc{4-Acyclic Colourability} is NP-complete for graphs of maximum degree five. 
We prove that for \( k\geq 3 \), \textsc{\( k \)-Acyclic Colourability} is NP-complete for bipartite graphs of maximum degree \( k+1 \), thereby generalising the NP-completeness result of Ochem, and adding bipartiteness to the NP-completeness result of Mondal et al. 
In contrast, \textsc{\( k \)-Acyclic Colourability} is polynomial-time solvable for graphs of maximum degree at most \( 0.38\, k^{\,3/4} \). 
Hence, for \( k\geq 3 \), the least integer \( d \) such that \textsc{\( k \)-Acyclic Colourability} in graphs of maximum degree \( d \) is NP-complete, denoted by \( L_a^{(k)} \), satisfies \( 0.38\, k^{\,3/4}<L_a^{(k)}\leq k+1 \). 
We prove that for \( k\geq 4 \), \textsc{\( k \)-Acyclic Colourability} in \( d \)-regular graphs is NP-complete if and only if \( L_a^{(k)}\leq d\leq 2k-3 \). 
We also show that it is coNP-hard to check whether an input graph \( G \) admits a unique \( k \)-acyclic colouring up to colour swaps (resp.\ up to colour swaps and automorphisms). 
\end{abstract}

\section{Introduction and Definitions}\label{sec:acyclic colouring intro}
Acyclic colouring is a variant of graph colouring introduced by Gr\"{u}nbaum~\cite{grunbaum} and widely studied for the class of planar graphs \cite{grunbaum,borodin1979} and its superclasses, such as 1-planar graphs \cite{borodin2001,yang2020} and graphs embeddable on surfaces \cite{albertson_berman,alon,kawarabayashi_mohar}. 
An acyclic colouring of a graph \( G \) is a (vertex) colouring of \( G \) without bicoloured cycles. 
It is used in the estimation of sparse Hessian matrices \cite{gebremedhin2005}. 
The algorithmic complexity of acyclic colouring is studied in various graph classes \cite{angelini_frati,alon_zaks,wang_zhang,shi_wang,lyons,fertin2003,bokPreprint}. 
Brause et al.~\cite{brause} investigated the complexity of 3-acyclic colouring with respect to the graph diameter. 
For \( k\geq 3 \), we study the complexity of \( k \)-acyclic colouring with respect to the maximum degree of the graph focusing on graphs of maximum degree \( d \) and \( d \)-regular graphs. 
Our interest is in the values of \( d \) for which the complexity of \( k \)-acyclic colouring in graphs of maximum degree \( d \) (resp.\  \( d \)-regular graphs) differ drastically from that in graphs of maximum degree \( d-1 \) (resp.\  \( (d-1) \)-regular graphs); we call such values of \( d \) as \emph{hardness transitions} of \( k \)-acyclic colouring (with respect to the maximum degree) in the class of graphs of maximum degree \( d \) (resp.\  \( d \)-regular graphs); see Section~\ref{sec:intro hardness transitions} for details. 
We also prove computational hardness results on unique acyclic colouring (see Section~\ref{sec:intro unique solution problems}). 

The paper is organised as follows. 
See Subsection~\ref{sec:def} for basic definitions. 
Subsection~\ref{sec:acyclic coloring known} discusses known results on the algorithmic complexity of acyclic colouring, and Subsections~\ref{sec:intro hardness transitions} and \ref{sec:intro unique solution problems} introduce the conventions and notations we use related to hardness transitions and unique solution problems, respectively. 
Subsection~\ref{sec:our results} lists the major contributions of this paper. 
Section~\ref{sec:hardness transitions of acyclic colouring} presents our results on the hardness transitions of acyclic colouring (with respect to the maximum degree). 
Section~\ref{sec:unique soln} discusses our results on unique acyclic colouring. 
We conclude with Section~\ref{sec:acyclic open} on open problems.

\iftoggle{forThesis}
{}{\subsection{Basic Definitions}\label{sec:def}
All graphs considered in this paper are finite, simple and undirected. 
We follow West~\cite{west} for graph theory terminology and notation. 
When the graph is clear from the context, we denote the number of edges of the graph by \( m \) and the number of vertices by \( n \). 
For a graph \( G \), we denote the maximum degree of \( G \) by \( \Delta(G) \). 
For a subset \( S \) of the vertex set of \( G \), the \emph{subgraph of \( G \) induced by \( S \)} is denoted by \( G[S] \). 
The \emph{girth} of a graph with a cycle is the length of its shortest cycle. 
A graph \( G \) is \emph{\( 2 \)-degenerate} if there exists a left-to-right ordering of its vertices such that every vertex has at most two neighbours to its left.
The \emph{maximum average degree} \( \text{mad}(G) \) of a graph \( G \) is the maximum over average degrees of subgraphs of~\( G \). That is, \( \text{mad}(G)=\max \{2|E(H)|/|V(H)|\colon H \text{ is a subgraph of } G\} \). 
The treewidth of \( G \) is denoted as \( \text{tw}(G) \). 
A 3-regular graph is also called a \emph{cubic graph}, and a graph of maximum degree~3 is called a \emph{subcubic graph}. 

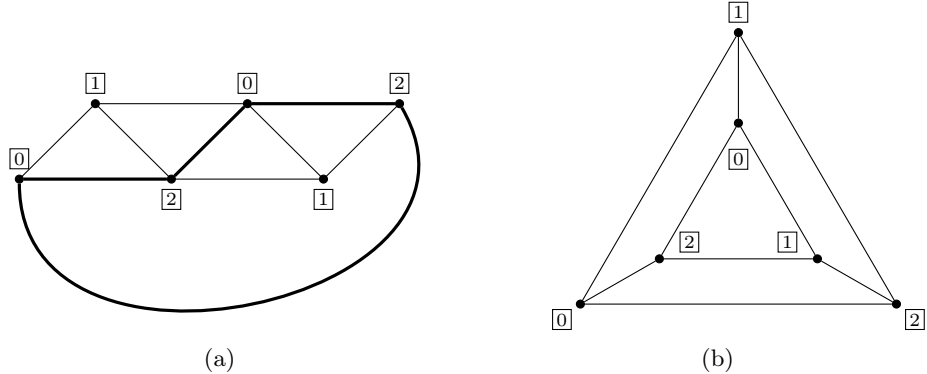
\begin{figure}[hbt]
\centering
\begin{subfigure}[b]{0.55\textwidth}
\centering
\iftoggle{iiitdmV}
{\scalebox{0.8}
}{\scalebox{1.0}
}{
\begin{tikzpicture}
\clip (-0.2,-2) rectangle (5.5,1.5);
\draw (0,0) node (00)[dot][label={[vcolour]0}]{} --++(1,1) node (01)[dot][label={[vcolour]1}]{} --++(1,-1) node (02)[dot][label={[vcolour]below:2}]{} --++(1,1) node (10)[dot][label={[vcolour]0}]{} --++(1,-1) node (11)[dot][label={[vcolour]below:1}]{} --++(1,1) node (12)[dot][label={[vcolour]2}]{};
\draw
(00)--(02)--(11)
(01)--(10)--(12);
\draw [very thick] (00) to[out=-90,in=-60,distance=3cm,looseness=0.8] (12);
\draw [very thick] (00)--(02)--(10)--(12);
\end{tikzpicture}
}
\caption{}
\end{subfigure}\begin{subfigure}[b]{0.3\textwidth}
\centering
\iftoggle{iiitdmV}
{\scalebox{0.9}
}{\scalebox{1.0}
}{
\begin{tikzpicture}[scale=1.2]
\path
(  90:1) node (u0)[dot][label={[vcolour,label distance=8pt]-90:0}]{}
( -30:1) node (u1)[dot][label={[vcolour,label distance=7pt,yshift=-2pt]150:1}]{}
(-150:1) node (u2)[dot][label={[vcolour,label distance=7pt,yshift=-2pt]30:2}]{}
(  90:2) node (v0)[dot][label={[vcolour]90:1}]{}
( -30:2) node (v1)[dot][label={[vcolour]-30:2}]{}
(-150:2) node (v2)[dot][label={[vcolour]-150:0}]{};
\draw
(u0)--(v0)
(u1)--(v1)
(u2)--(v2);
\draw
(u0)--(u1)--(u2)--(u0)
(v0)--(v1)--(v2)--(v0);
\end{tikzpicture}
}
\caption{}
\end{subfigure}\caption[Example of a 3-acyclic colouring]{(a) a 3-colouring of a graph, which is not a 3-acyclic colouring (bicoloured cycle highlighted), and (b) a 3-acyclic colouring of the circular ladder graph \( CL_3 \) (which is not 3-star colourable).}
\label{fig:acyclic colouring eg}
\end{figure}
A \( k \)-colouring of a graph \( G \) is a function \( f \) from the vertex set of \( G \) to a set of \( k \) colours, say \( \{0,1,\dots,k-1\} \), such that \( f \) maps every pair of adjacent vertices to different colours. Let us denote the \( i \)th colour class \( f^{-1}(i) \) by \( V_i \). 
A \( k \)-acyclic colouring of \( G \) is a \( k \)-colouring \( f \) of \( G \) such that every pair of colour classes induces an acyclic subgraph (i.e., \( G[V_i\cup V_j] \) is a forest for every pair of colour classes \( V_i \) and \( V_j \)). 
See Figure~\ref{fig:acyclic colouring eg} for an example. 
The \emph{acyclic chromatic number} \( \chi_a(G) \) of a graph \( G \) is the least integer \( k \) such that \( G \) is \( k \)-acyclic colourable. 
The problem \textsc{Acyclic Colourability} takes a graph \( G \) and a positive integer \( k \) as input and asks whether \( G \) is \( k \)-acyclic colourable. 
For \( k\in \mathbb{N} \), the decision problem \textsc{\( k \)-Colourability} takes a graph \( G \) as input and asks whether \( G \) is \( k \)-colourable. 
Similarly, for \( k\in \mathbb{N} \), the problem \textsc{\( k \)\nobreakdash-Acyclic Colourability} takes a graph \( G \) as input and asks whether \( G \) is \( k \)-acyclic colourable. 
To denote the restriction of a decision problem, we write the conditions in parenthesis. 
For instance, \textsc{\( 4 \)-Acyclic Colourability}\( (\text{bipartite}, \Delta=5) \) denotes the problem \textsc{\( 4 \)-Acyclic Colourability} restricted to the class of bipartite graphs \( G \) with \( \Delta(G)=5 \). 
The Exponential Time Hypothesis (ETH) asserts that \textsc{3-Sat} cannot be solved in \( 2^{o(n)} \) time, where \( n \) is the number of variables in the \textsc{3-Sat} formula \cite{niedermeier}.

An \emph{automorphism} of a graph \( G \) is a bijective function \( \psi \colon V(G)\to V(G) \) such that \( xy\in E(G) \) if and only if \( \psi(x)\psi(y)\in E(G) \). 
We say that two colourings \( f_1 \) and \( f_2 \) of \( G \) are the same \emph{up to colour swaps} if \( f_2 \) can be obtained from \( f_1 \) by merely swapping colours (that is, there exists a permutation \( \sigma \) of colours such that \( f_2(v)=\sigma(f_1(v)) \) for every vertex \( v \) of \( G \)). We say that two colourings \( f_1 \) and \( f_2 \) of \( G \) are the same \emph{up to colour swaps and automorphisms} if there exists a permutation \( \sigma \) of colours and an automorphism \( \psi \) of \( G \) such that \( f_2(\psi(v))=\sigma(f_1(v)) \) for every vertex \( v \) of \( G \).

}

\subsection{Acyclic colouring: Literature Survey}\label{sec:acyclic coloring known}
\iftoggle{forThesis}
{ Due to the vast literature on acyclic colouring, we focus on the algorithmic complexity aspect. 
We do not claim to mention all results on its complexity here; 
for surveys, see \cite[Section~9]{borodin2013} and \cite[Section~3.11]{jensen_toft}. 
} { Gr\"{u}nbaum~\cite{grunbaum} conjectured that every planar graph is 5-acyclic colourable. 
This conjecture and its proof by Borodin~\cite{borodin1979} attracted the attention of many, and as a result, acyclic colouring is widely studied for the class of planar graphs \cite[Section~9]{borodin2013} and its superclasses including 1-planar graphs \cite{borodin2001,yang2020} and graphs embeddable on surfaces \cite{albertson_berman,alon,kawarabayashi_mohar}. 
It is also studied for other graph classes such as regular graphs \cite{angelini_frati}, line graphs \cite{alon_zaks,wang_zhang,shi_wang}, \( H \)-free graphs for fixed \( H \)~\cite{bokPreprint}, co-bipartite graphs~\cite{bokPreprint}\footnote{See also \url{http://www.math.tau.ac.il/~nogaa/PDFS/multitaskadd.pdf}}, cographs \cite{lyons}, and grid graphs \cite{fertin2003}. 
Besides, acyclic colouring is studied for classes of graphs obtained by imposing bounds on parameters such as maximum degree, girth, maximum average degree and degeneracy \cite{alon1991,kostochka_stocker,wang2022,cai,muthu2007,borodin2010,basavaraju_chandran}. 
Gr\"{u}nbaum~\cite{grunbaum} proved that every graph of maximum degree 3 is 4-acyclic colourable. 
Burstein~\cite{burstein} proved that every graph of maximum degree 4 is 5-acyclic colourable. 
Due to the vast literature, we focus on the algorithmic complexity aspect of acyclic colouring. 
For surveys, see \cite[Section~3.11]{jensen_toft} and \cite[Section~9]{borodin2013}.

Fertin et al.~\cite{fertin2003} proved that \( \chi_a(G)>1+\frac{m}{n} \) for every non-empty graph \( G \). 
Alon et al.~\cite{alon1991} proved that \( \chi_a(G)\leq 50\, d^{4/3} \) for every graph \( G \) of maximum degree \( d \). 
This bound was improved by Ndreca et al.~\cite{ndreca} to \( 6.59\,d^{4/3}+3.3\,d \), and by Sereni and Volec~\cite{sereni_volec} to \( 2.835\,d^{4/3}+d \). 
See \cite{goncalves,kirousis_livieratos,alves} for related work.

}

\subsubsection*{Algorithms for Acyclic Colouring}
Every \( k \)-colouring of a chordal graph \( G \) is a \( k \)-acyclic colouring of \( G \), and hence the acyclic chromatic number of \( G \) can be computed in polynomial time \cite{gebremedhin2009}. 
Lyons~\cite{lyons} proved that the acyclic chromatic number of a cograph can be computed in linear time (i.e., \( O(m+n) \) time). 
Linhares-Sales et al.~\cite{linhares-sales} designed a linear-time algorithm to compute the acyclic chromatic number for two superclasses of cographs called \( P_4 \)-tidy graphs and \( (q,q-4) \)-graphs (for each fixed \( q \)). 
Skulrattanakulchai~\cite{skulrattanakulchai} designed a linear-time algorithm to acyclic colour a graph of maximum degree 3 with 4 colours. 
Cheng et al.~\cite{cheng} designed a polynomial-time algorithm to obtain optimal acyclic colourings of claw-free graphs of maximum degree 3.

\subsubsection*{Hardness Results on Acyclic Colouring}Kostochka~\cite{kostochka} and Coleman and Cai~\cite{coleman_cai} independently produced constructions that proved the NP-completeness of \textsc{Acyclic Colourability}. 
We restate the construction of Coleman and Cai~\cite{coleman_cai} in Section~\ref{sec:fpt acyclic} (see Construction~\ref{make:acyclic and star 2-degenerate bipartite}). 
From Construction~\ref{make:acyclic and star 2-degenerate bipartite}, it follows that (i)~for all \( k\geq 3 \), \textsc{\( k \)\nobreakdash-Acyclic Colourability} is NP-complete for 2\nobreakdash-degenerate bipartite graphs and (ii)~\textsc{\( 3 \)-Acyclic Colourability} is NP-complete for 2\nobreakdash-degenerate planar bipartite graphs. 
Borrowing ideas from Construction~\ref{make:acyclic and star 2-degenerate bipartite}, Gebremedhin et al.~\cite{gebremedhin2007} proved that for all \( \epsilon>0 \), it is NP-hard to approximate the acyclic chromatic number of a 2-degenerate bipartite graph within \( n^{\frac{1}{3}-\epsilon} \). 
In contrast, every 2-degenerate graph admits an acyclic colouring with \( n^{\frac{1}{2}} \) colours \cite[Theorem~6.2]{karpas} (note that every unique superior colouring is an acyclic colouring \cite{karpas}); hence, the acyclic chromatic number of a 2-degenerate graph is approximable within \( n^{\frac{1}{2}} \). 
Bok et al.~\cite{bok} studied the complexity of \textsc{Acyclic Colourability} and \textsc{\( k \)-Acyclic Colourability} in \( H \)-free graphs. 
They also proved that for \( k\geq 3 \), \textsc{\( k \)-Acyclic Colourability} is NP-complete for line graphs and graphs of arbitrarily large girth.
 
As mentioned, \textsc{3-Acyclic Colourability} is NP-complete for 2-degenerate planar bipartite graphs~\cite{coleman_cai}. 
Ochem~\cite{ochem} proved that the problem remains NP-complete when further restricted to graphs of maximum degree four. 
Alon and Zaks~\cite{alon_zaks} proved that \textsc{3-Acyclic Colourability} is NP-complete for line graphs of subcubic graphs. 
Mondal et al.~\cite{mondal} proved that \textsc{4-Acyclic Colourability} is NP-complete for graphs of maximum degree five. 
The problem \textsc{4-Acyclic Colourability} is also NP-complete for planar graphs of maximum degree seven \cite{mondal} and 2-degenerate planar bipartite graphs of maximum degree eight \cite{ochem}.

Consider a fixed \( k\geq 3 \). 
Emden-Weinert et al.~\cite{emden-weinert} proved that \textsc{\( k \)-Colourability} is NP-complete for graphs of maximum degree \( k-1+\raisebox{1.5pt}{\big\lceil}\sqrt{k}\raisebox{1.5pt}{\big\rceil} \). 
Since the Construction of Coleman and Cai~\cite{coleman_cai} (restated below as Construction~\ref{make:acyclic and star 2-degenerate bipartite}) establishes a reduction from \textsc{\( k \)-Colourability} in graphs of maximum degree \( k-1+\raisebox{1.5pt}{\big\lceil}\sqrt{k}\raisebox{1.5pt}{\big\rceil} \) to \textsc{\( k \)-Acyclic Colourability} in graphs of maximum degree \( k(k-1+\raisebox{1.5pt}{\big\lceil}\sqrt{k}\raisebox{1.5pt}{\big\rceil}) \), the latter problem is NP-complete as well. 
\begin{observation}\label{obs: almost k sq star acyclic npc}
For \( k\geq 3 \), \textsc{\( k \)-Acyclic Colourability} is NP-complete for graphs of maximum degree \( k(k-1+\raisebox{1.5pt}{\big\lceil}\sqrt{k}\raisebox{1.5pt}{\big\rceil}) \). 
\qed
\end{observation}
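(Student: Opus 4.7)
The plan is to chain together two known facts already mentioned in the preceding paragraph: Emden\nobreakdash-Weinert et al.'s NP\nobreakdash-completeness of \textsc{$k$\nobreakdash-Colourability} for graphs of maximum degree $k-1+\lceil\sqrt{k}\rceil$, and the Coleman--Cai reduction which will be restated as Construction~\ref{make:acyclic and star 2-degenerate bipartite}. The goal is simply to verify that (i)~the Coleman--Cai construction multiplies the maximum degree by exactly a factor of $k$, and (ii)~it preserves the equivalence between $k$-colourability of the input and $k$-acyclic colourability of the output. Combining these two observations with the Emden--Weinert starting point immediately yields the claimed bound of $k(k-1+\lceil\sqrt{k}\rceil)$.

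Concretely, I would first recall that membership of \textsc{$k$-Acyclic Colourability} in NP is routine: a certificate is a colouring $f\colon V(G)\to\{0,\dots,k-1\}$, and verifying properness together with the acyclicity of each of the $\binom{k}{2}$ induced bichromatic subgraphs takes polynomial time. Second, I would fix an arbitrary instance $G$ of \textsc{$k$-Colourability} with $\Delta(G)\leq k-1+\lceil\sqrt{k}\rceil$, feed it into Construction~\ref{make:acyclic and star 2-degenerate bipartite}, and invoke the correctness of that construction to conclude that the resulting graph $G'$ is $k$-acyclic colourable if and only if $G$ is $k$-colourable. The final piece is a degree calculation showing $\Delta(G')\leq k\cdot \Delta(G)$, which the preceding paragraph asserts is a feature of the construction; once Construction~\ref{make:acyclic and star 2-degenerate bipartite} is written down explicitly, this will be transparent.

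Since the two ingredients are cited black boxes and the arithmetic $k\cdot(k-1+\lceil\sqrt{k}\rceil)$ is immediate, there is no real obstacle. The only mild care needed is to confirm that the Coleman--Cai construction indeed blows up the maximum degree by a factor of exactly $k$ (and not by more, e.g., by $k+1$), but this will be a direct inspection of Construction~\ref{make:acyclic and star 2-degenerate bipartite} once it is stated. In fact, because the observation is stated without a separate proof block (it ends with \qed immediately), the above one-paragraph justification is essentially all that is required to appear in the manuscript.
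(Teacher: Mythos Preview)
Your proposal is correct and matches the paper's approach exactly: the paragraph immediately preceding the observation already spells out the argument as combining Emden--Weinert et al.'s hardness of \textsc{$k$-Colourability} at maximum degree $k-1+\lceil\sqrt{k}\rceil$ with the Coleman--Cai construction (Construction~\ref{make:acyclic and star 2-degenerate bipartite}), whose replacement of each edge by a $K_{2,k}$ multiplies the degree of every original vertex by $k$. The observation is indeed closed with an immediate \qed, so your one-paragraph justification is precisely what the paper does.
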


\subsubsection*{Fixed-Parameter Tractability of Acyclic Colouring}\label{sec:fpt acyclic}
For every positive integer \( k \), \textsc{\( k \)-Acyclic Colourability} can be expressed in Monadic Second Order (MSO) logic without edge set quantifiers (i.e., MSO\( _1 \)) as follows.
\[
 \exists V_0\ \exists V_1\ \dots \exists V_{k-1}\ k\text{-Colouring}(V_0,V_1,\dots,V_{k-1})\ \wedge
\]
\iftoggle{iiitdmV}
{\vspace*{-1.00cm}
}{}\[ \neg \text{ContainCycle}(V_0,V_1)\ \wedge \ \dots \wedge\ \neg \text{ContainCycle}(V_{k-2},V_{k-1})
\]
Here, ContainCycle\( (V_i,V_j) \) stands for \( \exists W\ \forall w\in W\ (w\in V_i \vee w\in V_j)\ \wedge \ (\exists w'\ \exists w''\ \allowbreak\mbox{\( w'\in W \)}\wedge \mbox{\( w''\in W \)} \wedge w'\neq w''\wedge \text{adj}(w,w')\wedge \text{adj}(w,w''))  \) (that is, there exists a set \( W\subseteq V_i\cup V_j \) such that every vertex \( w \) in \( W \) has two neighbours \( w' \) and \( w'' \) in \( W \)). 
Also, \( k \)-Colouring(\( V_0,V_1,\dots,V_{k-1} \)) denotes the MSO formula that says \( \{V_0,V_1,\dots,V_{k-1}\} \) is a partition of the vertex set of the graph into independent sets (see Section~1 in the supplementary material for the MSO formula).

Therefore, for each \( k \), the problem \textsc{\( k \)-Acyclic Colourability} admits FPT algorithms with parameter either treewidth or cliquewidth by Courcelle's theorem \cite{borie,courcelle}.
Ganian and Hlin\v{e}n\'{y}~\cite{ganian_hlineny} obtained an FPT algorithm for \textsc{\( k \)-Acyclic Colourability} with parameter rankwidth.

On the negative side, the following construction shows that unless coNP \( \subseteq \) NP/poly, \textsc{\( k \)-Acyclic Colourability} parameterized by treewidth does not admit a polynomial kernel, provided \( k\geq 3 \). 
\begin{construct}[Coleman and Cai~\cite{coleman_cai}]\label{make:acyclic and star 2-degenerate bipartite}
\emph{Parameter:} An integer \( k\geq 3 \).\\
\emph{Input:} A graph \( G \).\\
\emph{Output:} A 2-degenerate bipartite graph \( G' \).\\
\emph{Guarantee 1} \cite{coleman_cai}\,: \( G \) is \( k \)-colourable if and only if \( G' \) is \( k \)-acyclic colourable.\\
\emph{Guarantee 2:} \( \text{tw}(G')\leq \text{tw}(G)+1 \).\\ \emph{Steps:}\\
Replace each edge \( e=uv \) of \( G \) by a copy of the complete bipartite graph \( K_{2,k} \) with parts \( \{u,v\} \) and \( \{e_1,e_2,\dots,e_k\} \), where \( e_1,e_2,\dots,e_k \) are newly introduced vertices. 
To obtain a 2-degenerate ordering of \( V(G') \), list members of \( V(G) \) followed by the new vertices \( e_i \). 
\end{construct}
Guarantee~2 is easy to prove, especially using the game-theoretic definition of treewidth (see Section~2 in the supplementary material for details). 
By Guarantee~2, the transformation from \textsc{\( k \)-Colourability} to \textsc{\( k \)-Acyclic Colourability} established by Construction~\ref{make:acyclic and star 2-degenerate bipartite} is a Polynomial Parameter Transformation (PPT) \cite{fomin2019} when both problems are parameterized by treewidth. 
Thus, we have the following theorem since \textsc{\( k \)-Colourability} with parameter treewidth does not admit a polynomial kernel for \( k\geq 3 \). 

\begin{theorem}\label{thm:acyclic intractable w.r.t. treewidth}
For all \( k\geq 3 \), \textsc{\( k \)-Acyclic Colourability} parameterized by treewidth does not admit a polynomial kernel unless coNP \( \subseteq \) NP/poly. 
\qed
\end{theorem}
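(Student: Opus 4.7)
The plan is to obtain the kernel lower bound by a straight polynomial parameter transformation (PPT) from \textsc{$k$-Colourability} parameterized by treewidth, using Construction~\ref{make:acyclic and star 2-degenerate bipartite} itself. Concretely, I would first recall that for every $k\geq 3$, \textsc{$k$-Colourability} parameterized by treewidth is known to have no polynomial kernel unless $\text{coNP}\subseteq \text{NP/poly}$ (this follows, for instance, from Bodlaender, Jansen, and Kratsch's OR-cross-composition framework, since one can OR-compose instances sharing the treewidth bound). Then I would verify that Construction~\ref{make:acyclic and star 2-degenerate bipartite} is a PPT from \textsc{$k$-Colourability} (parameter $\text{tw}(G)$) to \textsc{$k$-Acyclic Colourability} (parameter $\text{tw}(G')$), so the non-existence of a polynomial kernel transfers.

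A PPT requires three things: polynomial-time computability, a correctness equivalence, and a polynomial relationship between the parameter values. Polynomial-time computability is immediate because Construction~\ref{make:acyclic and star 2-degenerate bipartite} replaces each of the $|E(G)|$ edges by a $K_{2,k}$ for a fixed constant $k$. The yes/no equivalence is exactly Guarantee~1. The parameter bound is Guarantee~2, namely $\text{tw}(G')\leq \text{tw}(G)+1$. The argument for Guarantee~2 is the part I would need to actually write out, and I expect it to be the only mildly non-trivial step: using the cops-and-robber characterisation of treewidth, if $\text{tw}(G)\leq t$ then $t+1$ cops can catch the robber on $G$, and in $G'$ one extra cop can be used to cover the two endpoints $\{u,v\}$ of the currently ``active'' edge $e=uv$ so that the $k$ new vertices $e_1,\dots,e_k$ (which are only adjacent to $\{u,v\}$) can be cleaned one edge at a time; alternatively, one takes any tree decomposition of $G$ of width $t$ and, for each edge $e=uv\in E(G)$, attaches $k$ leaf bags $\{u,v,e_i\}$ to some bag containing both $u$ and $v$, giving width $\max(t,2)=t+1$ (since $t\geq 1$ whenever $G$ has an edge).

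With the PPT in hand, the theorem follows from the standard lifting lemma for polynomial kernels under PPTs (see, e.g., Fomin et al.~\cite{fomin2019}): if \textsc{$k$-Acyclic Colourability} parameterized by treewidth admitted a polynomial kernel, then composing the PPT with that kernel would produce a polynomial kernel for \textsc{$k$-Colourability} parameterized by treewidth, contradicting the assumption on $\text{coNP}$. Hence no such kernel exists for \textsc{$k$-Acyclic Colourability} unless $\text{coNP}\subseteq \text{NP/poly}$, as required. The only writing work is the treewidth bookkeeping in Guarantee~2; everything else is an invocation of known machinery.
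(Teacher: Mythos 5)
Your proposal matches the paper's proof: both establish the result by observing that Construction~\ref{make:acyclic and star 2-degenerate bipartite}, together with Guarantee~1 and the treewidth bound of Guarantee~2 (which the paper likewise justifies via the game-theoretic characterisation of treewidth), is a polynomial parameter transformation from \textsc{\( k \)-Colourability} parameterized by treewidth, whose kernel lower bound then transfers. The extra details you supply (the explicit leaf-bag augmentation of a tree decomposition and the citation for the source problem's lower bound) are correct but do not change the argument.
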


\subsection{Hardness Transitions}\label{sec:intro hardness transitions}
Analysing the boundary between easy (i.e., polynomial-time solvable) and hard (e.g., NP-complete) problems is a common theme in complexity theory \cite{garey_johnson}. 
Studying the change in the complexity of a problem in response to a change in a single parameter falls in this category. 
Brause et al.~\cite{brause} studied the complexity of \textsc{3-Acyclic Colourability} with the diameter of the graph as the parameter. 
For \( k\geq 3 \), we study the complexity of \( k \)-acyclic colouring with the maximum degree of the graph as the parameter. 
Recall that we write the conditions in parenthesis to denote the restriction of a decision problem; 
e.g.: \textsc{\( 4 \)-Acyclic Colourability}\( (\text{bipartite}, \Delta=5) \) denotes the problem \textsc{\( 4 \)-Acyclic Colourability} restricted to the class of bipartite graphs \( G \) with \( \Delta(G)=5 \).
We assume \mbox{P \( \neq \) NP} throughout this paper; thus, NP is partitioned into three classes: P, NPC and NPI~\cite{ladner}.
We emphasise that our interest is in the classification of NP-problems with respect to the P vs.\ NPC vs.\ NPI trichotomy: that is, the complexity classes dealt with in this paper are only P, NPC and NPI.

A decision problem \( \Pi \) in NP has a \emph{hardness transition} with respect to a discrete parameter \( d \) at a point \( d=x \) if \( \Pi(d=x) \) and \( \Pi(d=x-1) \) belong to different complexity classes among P, NPC and NPI (e.g.: \( \Pi(d=x)\in \)\,NPC whereas \( \Pi(d=x-1)\in \)\,P; see \cite{mikero} for a discussion). 
For example, \textsc{3-Colourability} of a graph of maximum degree \( d \) is polynomial-time solvable for \( d=3 \) (due to Brook's theorem) and NP-complete for \( d=4 \) \cite{garey_johnson}. 
That is, \textsc{3-Colourability}\( (\Delta=3)\in \) P and \textsc{3-Colourability}\( (\Delta=4)\in \) NPC. 
Hence, \textsc{3-Colourability} has a hardness transition with respect to the maximum degree \( d \) at the point \( d=4 \). 
Note that each hardness transition presumably deals with the P vs.\ NPC boundary since no `natural' problem is known to be NP-intermediate~\cite{arora_barak}.

The number of hardness transitions depends on the problem as well as the parameter under consideration. 
Interestingly, a decision problem can have infinitely many hardness transitions. 
Cseh and Kavitha~\cite{cseh_kavitha} proved that the popular matching problem on complete graph \( K_n \) is in P for odd \( n \) whereas it is NP-complete for even \( n \). 
Therefore, the popular matching problem on complete graph with respect to the number of vertices \( n \) has infinitely many hardness transitions. 

Let us consider the complexity of \( k \)-colouring and \( k \)-acyclic colouring in bounded degree graphs for fixed \( k\geq 3 \). 
Emden-Weinert et al.~\cite{emden-weinert} proved that \textsc{\( k \)-Colourability} is NP-complete for graphs of maximum degree \( k-1+\raisebox{1.5pt}{\big\lceil}\sqrt{k}\raisebox{1.5pt}{\big\rceil} \). 
By Observation~\ref{obs: almost k sq star acyclic npc}, \textsc{\( k \)-Acyclic Colourability} is NP-complete for graphs of maximum degree \( k(k-1+\raisebox{1.5pt}{\big\lceil}\sqrt{k}\raisebox{1.5pt}{\big\rceil}) \). 
Hence, \textsc{\( k \)\nobreakdash-Colourability} (resp.\ \textsc{\( k \)\nobreakdash-Acyclic Colourability}) in graphs of maximum degree \( d \) is NP-complete when \( d \) is sufficiently large. 
Observe that if \textsc{\( k \)\nobreakdash-Colourability} is NP-complete for graphs of maximum degree \( d \), then it is NP-complete for graphs of maximum degree \( d+1 \) (to produce a reduction, it suffices to add a disjoint copy of \( K_{1,d+1} \)). 
This suggests the following problem. 
\begin{problem}\label{prob:what is Lk}
For \( k\geq 3 \), what is the least integer \( d \) such that \textsc{\( k \)-Colourability} is NP-complete for graphs of maximum degree \( d \)?
\end{problem}
\noindent Observe that Problem~\ref{prob:what is Lk} deals with locating a point of hardness transition. 
By the same argument as in \textsc{\( k \)-Colourability}, if \textsc{\( k \)-Acyclic Colourability} is NP-complete for graphs of maximum degree~\( d \), then \textsc{\( k \)-Acyclic Colourability} is NP-complete for graphs of maximum degree \( d+1 \). 
Therefore, for each \( k\geq 3 \), there exists a unique integer \( d^* \) such that \textsc{\( k \)\nobreakdash-Colourability} (resp.\ \textsc{\( k \)\nobreakdash-Acyclic Colourability}) in graphs of maximum degree \( d \) is NP-complete if and only if \( d\geq d^* \). 
Thus, one can ask the counterpart of Problem~\ref{prob:what is Lk} for acyclic colouring. 
Let \( L^{(k)} \) and \( L_a^{(k)} \) denote the answers to Problem~\ref{prob:what is Lk} and its counterpart for acyclic colouring; that is, \( L^{(k)} \) (resp.\  \( L_a^{(k)} \)) is the least integer \( d \) such that \textsc{\( k \)\nobreakdash-Colourability} (resp.\ \textsc{\( k \)\nobreakdash-Acyclic Colourability}) is NP-complete for graphs of maximum degree \( d \).

Due to Brook's theorem, \textsc{\( k \)-Colourability} is polynomial-time solvable for graphs of maximum degree \( k \), and thus \( L^{(k)}\geq k+1 \). 
For \( k\geq 3 \), \textsc{\( k \)\nobreakdash-Colourability} is NP-complete for graphs of maximum degree \( k-1+\raisebox{1.5pt}{\big\lceil}\sqrt{k}\raisebox{1.5pt}{\big\rceil} \) \cite{emden-weinert}, and thus \( k+1\leq L^{(k)}\leq k-1+\raisebox{1.5pt}{\big\lceil}\sqrt{k}\raisebox{1.5pt}{\big\rceil} \). 
Hence, \( L^{(3)}=4 \), \( L^{(4)}=5 \), \( 6\leq L^{(5)}\leq 7 \), and so on. 
For sufficiently large \( k \) and \( d<k-1+\raisebox{1.5pt}{\big\lceil}\sqrt{k}\raisebox{1.5pt}{\big\rceil} \), the problem \textsc{\( k \)\nobreakdash-Colourability} is in P for graphs of maximum degree \( d \) \cite[Theorem~43]{molloy_reed}. 
Therefore, \( L^{(k)}=k-1+\raisebox{1.5pt}{\big\lceil}\sqrt{k}\raisebox{1.5pt}{\big\rceil} \) for sufficiently large \( k \). 
Yet, the exact value of \( L^{(k)} \) is unknown for small values of \( k \) such as \( k=5 \) eventhough we know that \( L^{(5)}\in \{6,7\} \) (the complexity of \textsc{5\nobreakdash-Colourability} in graphs of maximum degree~6 is open \cite{paulusma}).

\subsection{Unique Solution Problems}\label{sec:intro unique solution problems}
For a decision problem \( X \), the \emph{unique solution problem} associated with \( X \) takes the same input as \( X \) and asks whether problem \( X \) with the given input has exactly one solution. 
For instance, the unique solution problem associated with \textsc{Sat} takes a boolean formula \( B \) as input and asks whether \( B \) has exactly one satisfying truth assignment. 
For some decision problems, it is more natural to consider an equivalence relation and ask whether there is only one equivalence class in it. 
For example, for a graph \( G \), it is customary to call a \( k \)-colouring \( f \) of \( G \) as the unique \( k \)-colouring of \( G \) if each \( k \)-colouring of \( G \) can be obtained from \( f \) by merely swapping colours. 
That is, instead of the number of \( k \)-colourings of \( G \), we are interested in the number of equivalence classes under \( \mathcal{R}_{\text{swap}}(G,k) \) where \( \mathcal{R}_{\text{swap}}(G,k) \) is an equivalence relation on the set of \( k \)-colourings of \( G \) defined as \( (f_1,f_2)\in\mathcal{R}_{\text{swap}}(G,k) \) if \( f_1 \) and \( f_2 \) are the same up to colour swaps. 
Similarly, we define an equivalence relation \( \mathcal{R}_{\text{swap+auto}}(G,k) \) on the set of \( k \)-colourings of \( G \) as \( (f_1,f_2)\in \mathcal{R}_{\text{swap+auto}}(G,k) \) if \( f_1 \) and \( f_2 \) are the same up to colour swaps and automorphisms. 
Thus, we have two unique solution problems associated with \( k \)-colouring.\\

\noindent \textsc{Unique \( k \)-Colouring} [\( \mathcal{R}_{\text{swap}} \)]\\
\emph{Instance:} A graph \( G \).\\
\emph{Question:} Is the number of equivalence classes under \( \mathcal{R}_{\text{swap}}(G,k) \) exactly one?\\
\phantom{\emph{Question:}} (i.e., Is the number of \( k \)-colourings of \( G \) up to colour swaps exactly one?)\\

\noindent \textsc{Unique \( k \)-Colouring} [\( \mathcal{R}_{\text{swap+auto}} \)]\\
\emph{Instance:} A graph \( G \).\\
\emph{Question:} Is the number of equivalence classes under \( \mathcal{R}_{\text{swap+auto}}(G,k) \) exactly one?\\
\iftoggle{forThesis}
{\phantom{\emph{Question:}} (Is the number of \( k \)-colourings of \( G \) up to colour swaps and automorphisms\\
\phantom{\emph{Question:}} exactly one?)\\
}{\phantom{\emph{Question:}} (Is the number of \( k \)-colourings of \( G \) up to colour swaps and automorphisms exactly one?)\\
}

An \emph{another solution problem} is closely related to the unique solution problem. The another solution problem associated with \textsc{Sat} takes a boolean formula \( B \) and a satisfying truth assignment of \( B \) as input and asks whether \( B \) has another satisfying truth assignment. Similar to the unique solution problem, there are two another solution problems associated with \( k \)-colouring, namely \textsc{Another \( k \)-Colouring} [\( \mathcal{R}_{\text{swap}} \)] and \textsc{Another \( k \)-Colouring} [\( \mathcal{R}_{\text{swap+auto}} \)]. The former is defined below, and the latter is defined likewise.\\

\noindent \textsc{Another \( k \)-Colouring} [\( \mathcal{R}_{\text{swap}} \)]\\
\emph{Instance:} A graph \( G \), and a \( k \)-colouring \( f \) of \( G \).\\
\emph{Question:} Is there another \( k \)-colouring of \( G \) up to colour swaps?\\\phantom{\emph{Question:}} (i.e., a \( k \)-colouring \( f^* \) of \( G \) such that \( (f,f^*)\notin \mathcal{R}_{\text{swap}}(G,k) \))\\

Dailey~\cite{dailey} proved that for all \( k\geq 3 \), \textsc{Another \( k \)-Colouring}\,[\( \mathcal{R}_{\text{swap}} \)] is NP-hard. 
Hence, given a \( k \)-colourable graph \( G \), it is coNP-hard to check whether \( G \) is uniquely \( k \)-colourable up to colour swaps. 
That is, \textsc{Unique \( k \)-Colouring}\,[\( \mathcal{R}_{\text{swap}} \)] is coNP-hard even when restricted to the class of \( k \)-colourable graphs.
Dailey~\cite{dailey} produced a reduction from \textsc{\( k \)-Colourability} to \textsc{Another \( k \)-Colouring}\,[\( \mathcal{R}_{\text{swap}} \)]. 
A close look reveals that the same construction also establishes a reduction from \textsc{\( k \)\nobreakdash-Colourability} to \textsc{Another \( k \)-Colouring}\,[\( \mathcal{R}_{\text{swap+auto}} \)]. 
Thus, for all \( k\geq 3 \), problems \textsc{Another \( k \)-Colouring}\,[\( \mathcal{R}_{\text{swap}} \)] and \textsc{Another \( k \)-Colouring}\,[\( \mathcal{R}_{\text{swap+auto}} \)] are NP-complete. 
As a result, the problems \textsc{Unique \( k \)-Colouring}\,[\( \mathcal{R}_{\text{swap}} \)] and \textsc{Unique \( k \)-Colouring}\,[\( \mathcal{R}_{\text{swap+auto}} \)] are coNP-hard for \( k\geq 3 \).

It is easy to observe that Dailey's construction provides reductions from \textsc{\( k \)-Coloura\allowbreak bility}(\( \Delta=k-1+\raisebox{1.5pt}{\big\lceil}\sqrt{k}\raisebox{1.5pt}{\big\rceil} \)) to \textsc{Another \( k \)\nobreakdash-Colouring}\,[\( \mathcal{R}_{\text{swap}} \)]\,(\( \Delta=(k-1)(k-1+\raisebox{1.5pt}{\big\lceil}\sqrt{k}\raisebox{1.5pt}{\big\rceil}) \)) and to \textsc{Another \( k \)\nobreakdash-Colouring}\,[\( \mathcal{R}_{\text{swap+auto}} \)]\,(\( \Delta=(k-1)(k-1+\raisebox{1.5pt}{\big\lceil}\sqrt{k}\raisebox{1.5pt}{\big\rceil}) \)). 
In particular, \pagetarget{lnk:another 3-colouring}{\textsc{Another 3-Colouring}\,[\( \mathcal{R}_{\text{swap}} \)] and \textsc{Another 3-Colouring}\,[\( \mathcal{R}_{\text{swap+auto}} \)] are NP-complete for graphs of maximum degree~8}.

We establish hardness results for unique solution problems and another solution problems associated with acyclic colouring. 
These problems are defined in the obvious way. 
For instance, \textsc{Unique \( k \)\nobreakdash-Acyclic Colouring}\,[\( \mathcal{R}_{\text{swap}} \)] is defined like \textsc{Unique \( k \)-Colouring}\,[\( \mathcal{R}_{\text{swap}} \)]. 

\subsection{Our Results}\label{sec:our results}
Recall that for \( k\geq 3 \), \( L_a^{(k)} \) is the the least integer \( d \) such that \textsc{\( k \)-Acyclic Colourability} is NP-complete for graphs of maximum degree \( d \).
For \( k\geq 3 \), \textsc{\( k \)\nobreakdash-Acyclic Colourability} is NP-complete for graphs of maximum degree \( k(k-1+\raisebox{1.5pt}{\big\lceil}\sqrt{k}\raisebox{1.5pt}{\big\rceil}) \) by Observation~\ref{obs: almost k sq star acyclic npc}, and thus \( L_a^{(k)}\leq k(k-1+\raisebox{1.5pt}{\big\lceil}\sqrt{k}\raisebox{1.5pt}{\big\rceil}) \). 
Ochem~\cite{ochem} proved that \textsc{3\nobreakdash-Acyclic Colourability} is NP-complete for bipartite graphs of maximum degree~4; thus, \( L_a^{(3)}\leq 4 \). 
We generalise this result: for \mbox{\( k\geq 3 \)}, \textsc{\( k \)\nobreakdash-Acyclic Colourability} is NP-complete for bipartite graphs of maximum degree \( k+1 \) (and thus \( L_a^{(k)}\leq k+1 \)). 
Hence, \textsc{4-Acyclic Colourability} is NP-complete not only for graphs of maximum degree~5 \cite{mondal}, but also for bipartite graphs of maximum degree~5. 

For each \( k\geq 3 \), we prove the following.
\begin{enumerate}\item \textsc{\( k \)\nobreakdash-Acyclic Colourability} is NP-complete for bipartite graphs of maximum degree \( k+1 \), and the problem does not admit a \( 2^{o(n)} \)-time algorithm unless ETH fails.
\item \( 0.38\, k^{\,3/4}<L_a^{(k)}\leq k+1 \). 
\item Provided \( k\neq 3 \), \textsc{\( k \)-Acyclic Colourability} is NP-complete for \( d \)-regular graphs if and only if \( L_a^{(k)}\leq d\leq 2k-3 \). 
\item It is coNP-hard to check whether an input graph \( G \) admits a unique \( k \)-acyclic colouring up to colour swaps (resp.\ up to colour swaps and automorphisms). 
\end{enumerate}

\section{Hardness Transitions of Acyclic Colouring}\label{sec:hardness transitions of acyclic colouring}
In this section, we discuss hardness transitions of acyclic colouring with respect to the maximum degree. 
First, we show that (i)~for \( k\geq 3 \), \textsc{\( k \)-Acyclic Colourability} is NP-complete for graphs of maximum degree \( k+1 \), and (ii)~for \( k\geq 4 \) and \( d\leq 2k-3 \), \textsc{\( k \)-Acyclic Colourability} in graphs of maximum degree \( d \) is NP-complete if and only if \textsc{\( k \)-Acyclic Colourability} in \( d \)-regular graphs is NP-complete. 
The consequences of these results on the value of \( L_a^{(k)} \) are discussed later in Section~\ref{sec:acyclic colouring points of hardness transition}.

Let us start with a simple result due to Fertin et al.~\cite{fertin2003}, which has direct consequences for hardness transitions. 
We present a shorter proof, adapted from \cite{isaacg}, below. 
\begin{theorem}[\cite{fertin2003}]\label{thm:lb chi_a avg deg}
\( \chi_a(G)>1+\frac{m}{n} \) for every non-empty graph \( G \). \end{theorem}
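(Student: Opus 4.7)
The plan is to bound the total number of edges by using the forest condition on each pair of colour classes. Let $k=\chi_a(G)$ and fix an optimal acyclic colouring with colour classes $V_0,V_1,\dots,V_{k-1}$, which we may assume are all non-empty (else $k$ is not minimum). Since every pair of colour classes induces a forest, first I would use the elementary fact that a forest on $v$ vertices has at most $v-1$ edges to write
\[
\bigl|E(G[V_i\cup V_j])\bigr|\ \leq\ |V_i|+|V_j|-1
\]
for every pair $i<j$.

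Next, because the colouring is proper, every edge of $G$ lies between exactly two colour classes, so each edge is counted exactly once in the sum $\sum_{i<j}|E(G[V_i\cup V_j])|$. Summing the above inequality and using $\sum_{i<j}(|V_i|+|V_j|)=(k-1)\sum_i|V_i|=(k-1)n$ and $\sum_{i<j}1=\binom{k}{2}$, I would obtain
\[
m\ =\ \sum_{i<j}\bigl|E(G[V_i\cup V_j])\bigr|\ \leq\ (k-1)n-\binom{k}{2}.
\]

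Since $G$ is non-empty we have $k\geq 2$, so $\binom{k}{2}\geq 1$, and therefore $m<(k-1)n$. Dividing by $n$ and rearranging gives $k>1+m/n$, i.e.\ $\chi_a(G)>1+m/n$, as required. There is no real obstacle here; the only point worth being careful about is ensuring that the strict inequality is preserved, which is precisely what the $-\binom{k}{2}$ slack term provides (and is why the hypothesis that $G$ is non-empty is needed).
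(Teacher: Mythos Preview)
Your proof is correct and follows essentially the same approach as the paper: bound the edges in each bichromatic forest, sum over all pairs, and obtain $m<(k-1)n$. The only cosmetic difference is how the strict inequality is secured—you use the sharper per-pair bound $|V_i|+|V_j|-1$ (justified by taking an optimal colouring so that every class is non-empty), whereas the paper uses $|V_i|+|V_j|$ and argues that equality forces both classes to be empty, which cannot happen for every pair since $G$ is non-empty.
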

\begin{proof}[Proof \( ( \)adapted from \cite{isaacg}\,\( ) \).]
Let \( f\colon V(G)\to\{0,1,\dots,k-1\} \) be a \( k \)-acyclic colouring of \( G \). 
Recall that we denote the \( i \)th colour class \( f^{-1}(i) \) by \( V_i \) for \( 0\leq i\leq k-1 \). 
For every pair of colour classes \( V_i \) and \( V_j \), let \( m_{ij} \) denote the number of edges from \( V_i \) to \( V_j \). 
Since \( G[V_i\cup V_j] \) is a forest, we have \( m_{ij}\leq |V_i|+|V_j| \), and equality holds only when \( |V_i|=|V_j|=0 \). 
We know that some colour class is non-empty because \( G \) is a non-empty graph. 
Summing up over all \( i \) and \( j \), we get \( m<(k-1)n \) (for instance, \( |V_0| \) appears exactly \( k-1 \) times on the right side).
Hence, \( k>1+(m/n) \). Since \( f \) is an arbitrary \( k \)-acyclic colouring of \( G \), we have \( \chi_a(G)>1+(m/n) \).
\end{proof}
Since \( \frac{m}{n}=\frac{d}{2} \) for every \( d \)-regular graph \( G \), we have the following corollary. 
\begin{corollary}\label{cor:lb chi_a}
\( \chi_a(G)\geq \raisebox{1pt}{\big\lceil}\frac{d+3}{2}\raisebox{1pt}{\big\rceil} \) for every \( d \)-regular graph \( G \) (provided \( d\neq 0 \)). 
\qed
\end{corollary}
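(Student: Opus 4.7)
The plan is to apply Theorem~\ref{thm:lb chi_a avg deg} directly and then round up correctly according to the parity of $d$. For a non-empty $d$-regular graph with $d\geq 1$, the handshake lemma gives $m/n = d/2$, so Theorem~\ref{thm:lb chi_a avg deg} immediately yields the strict inequality $\chi_a(G) > 1 + d/2 = (d+2)/2$.

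The only remaining work is to convert this strict inequality into the claimed ceiling bound, which requires a small case split on the parity of $d$. If $d$ is odd, then $(d+2)/2$ is not an integer, and since $\chi_a(G)$ is an integer we get $\chi_a(G)\geq \lceil (d+2)/2\rceil = (d+3)/2 = \lceil (d+3)/2\rceil$. If $d$ is even, then $(d+2)/2$ is itself an integer, and the strict inequality forces $\chi_a(G)\geq (d+2)/2 + 1 = (d+4)/2 = \lceil (d+3)/2\rceil$. In both cases the desired bound $\chi_a(G)\geq \lceil (d+3)/2\rceil$ holds.

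There is no real obstacle here: the content is entirely in Theorem~\ref{thm:lb chi_a avg deg}, and the corollary is just the cleanest integer-valued consequence for regular graphs. The only thing to be careful about is not to weaken the strict inequality to a non-strict one before rounding, since that would lose one unit in the even case. The restriction $d\neq 0$ is needed precisely so that the graph is non-empty (so that Theorem~\ref{thm:lb chi_a avg deg} applies) and so that $m/n$ is well defined.
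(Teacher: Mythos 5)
Your proposal is correct and follows exactly the paper's (very brief) argument: substitute \( m/n = d/2 \) into Theorem~\ref{thm:lb chi_a avg deg} and round the resulting strict inequality up to the ceiling. Your explicit parity check justifying \( \chi_a(G) > (d+2)/2 \Rightarrow \chi_a(G)\geq \lceil (d+3)/2\rceil \) is just a more careful spelling-out of the step the paper leaves implicit.
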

\begin{corollary}
For every non-empty graph \( G^* \), \( \chi_a(G^*)>1+\frac{\textup{mad}(G^*)}{2} \).
\end{corollary}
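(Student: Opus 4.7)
The plan is to deduce this corollary from the preceding Theorem (Fertin et al., \( \chi_a(G) > 1 + m/n \) for every non-empty \( G \)) by applying that theorem not to \( G^* \) itself but to a subgraph of \( G^* \) that witnesses the maximum average degree. Since \( G^* \) is non-empty, it has at least one edge, so \( 2|E(G^*)|/|V(G^*)| > 0 \), and therefore the maximum in the definition of \( \text{mad}(G^*) = \max\{2|E(H)|/|V(H)| \colon H \subseteq G^*\} \) is attained by some subgraph \( H^* \) with \( |E(H^*)| > 0 \); in particular, \( H^* \) is non-empty and \( |E(H^*)|/|V(H^*)| = \text{mad}(G^*)/2 \).

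Next I would observe the monotonicity of \( \chi_a \) under taking subgraphs: if \( f \) is a \( \chi_a(G^*) \)-acyclic colouring of \( G^* \), then its restriction \( f|_{V(H^*)} \) is a proper colouring of \( H^* \), and every cycle of \( H^* \) is also a cycle of \( G^* \), so no pair of colour classes induces a cycle in \( H^* \) either. Hence \( f|_{V(H^*)} \) is an acyclic colouring of \( H^* \), giving \( \chi_a(H^*) \leq \chi_a(G^*) \).

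Finally, applying the preceding theorem to the non-empty graph \( H^* \) yields
\[
\chi_a(H^*) > 1 + \frac{|E(H^*)|}{|V(H^*)|} = 1 + \frac{\text{mad}(G^*)}{2},
\]
and combining this with \( \chi_a(G^*) \geq \chi_a(H^*) \) gives the desired strict inequality. There is no real obstacle here; the only thing to be a touch careful about is ensuring the witnessing subgraph \( H^* \) is itself non-empty so that the preceding theorem actually applies, which is handled by the non-emptiness hypothesis on \( G^* \).
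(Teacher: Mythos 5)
Your proposal is correct and follows essentially the same route as the paper: pick a subgraph attaining the maximum average degree, apply the preceding theorem to it, and use the monotonicity \( \chi_a(G^*)\geq \chi_a(H^*) \). The extra care you take in verifying that the witnessing subgraph is non-empty (so the theorem applies) is a detail the paper leaves implicit, but the argument is the same.
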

\begin{proof}
Let \( G \) be a subgraph of \( G^* \) such that the average degree of \( G \) is equal to the maximum average degree of \( G^* \). 
Applying Theorem~\ref{thm:lb chi_a avg deg} on \( G \) proves the corollary since \( \chi_a(G^*)\geq \chi_a(G) \) and \( |E(G)|/|V(G)|=\text{mad}(G^*)/2 \).
\end{proof}

Next, we show that the lower bound in Corollary~\ref{cor:lb chi_a} is attained by a vertex-transitive \( d \)-regular graph for each \( d \) (the graphs defined in the proof are used later in a construction). 
\begin{theorem}\label{thm:chi_a attained}
For all \( d\geq 1 \), there exists a \( d \)-regular vertex-transitive graph \( G_d \) with \( \chi_a(G_d)=\raisebox{1pt}{\big\lceil}\frac{d+3}{2}\raisebox{1pt}{\big\rceil} \).
\end{theorem}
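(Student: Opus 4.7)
The lower bound $\chi_a(G_d) \geq \lceil (d+3)/2 \rceil$ for any $d$-regular graph $G_d$ follows immediately from Corollary~\ref{cor:lb chi_a}, so the task reduces to constructing, for each $d \geq 1$, a $d$-regular vertex-transitive graph $G_d$ that admits a $k$-acyclic colouring with $k := \lceil (d+3)/2 \rceil$. The small cases I would dispose of directly: take $G_1 := K_2$, $G_2 := C_n$ for any $n \geq 3$, and $G_3 := CL_3$; the first two are trivial, and a $3$-acyclic colouring of $CL_3$ is already exhibited in Figure~\ref{fig:acyclic colouring eg}(b).

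For $d \geq 4$, my plan is to take $G_d$ to be a circulant $C_n(S) := \mathrm{Cay}(\mathbb{Z}_n, S)$ with $n$ a sufficiently large multiple of $k$ (say $n \geq k^3$), $S = -S \subset \mathbb{Z}_n \setminus \{0\}$, and $|S| = d$. Natural candidate connection sets are $S = \{\pm 1, \pm 2, \ldots, \pm t\}$ when $d = 2t$ is even, and $S = \{\pm 1, \ldots, \pm t, n/2\}$ (with $n$ even) when $d = 2t+1$ is odd; either choice makes $G_d$ a $d$-regular Cayley graph, hence vertex-transitive. The $k$-acyclic colouring $f : \mathbb{Z}_n \to \mathbb{Z}_k$ would be defined block-periodically: partition $\mathbb{Z}_n$ into $n/k$ consecutive blocks of size $k$, and within each block $b$ assign colours by a permutation $\pi_b$ of $\{0, 1, \ldots, k-1\}$. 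The sequence $(\pi_b)$ is tailored so that (i) every $s \in S$ always changes colour (propriety), and (ii) for every ordered pair of distinct colours $(i, j)$, each vertex of colour $i$ has at most one forward neighbour and at most one backward neighbour of colour $j$ along the circulant, so that $G_d[V_i \cup V_j]$ is a disjoint union of paths, hence a forest.

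The main obstacle is step (ii). The bound in Corollary~\ref{cor:lb chi_a} forces the colouring to be \emph{edge-tight}: with $k = \lceil (d+3)/2 \rceil$, the total edge count $dn/2$ almost saturates the forest capacity $\sum_{\{i,j\}}(|V_i|+|V_j|-1) = (k-1)n - \binom{k}{2}$, leaving a slack of only $n - \binom{k}{2}$ edges across all $\binom{k}{2}$ bichromatic pairs. Consequently, almost every colour pair must induce a near-spanning tree on $V_i \cup V_j$, and the colouring cannot be chosen carelessly. I would carry out step (ii) via a case analysis on the residues of the generators in $S$ modulo $k$, exploiting the size of $n$ to secure enough room for the permutations $\pi_b$ to be consistent across blocks. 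Should the pure circulant construction become unwieldy for some parity of $d$ or for some small $d$, a natural fallback is to use Cayley graphs of $\mathbb{Z}_k \times \mathbb{Z}_m$ with an appropriate generating set and colour the vertex $(a,b)$ by a function that mixes both coordinates to avoid bichromatic cycles, retaining the same overall strategy.
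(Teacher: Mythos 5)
Your lower bound and your small cases ($d\leq 3$) are fine, but the heart of the theorem for $d\geq 4$ is missing. Everything rests on actually producing the permutation sequence $(\pi_b)$ and verifying condition (ii), and you explicitly defer this ("the main obstacle"), offer only a vague plan ("case analysis on the residues of the generators modulo $k$"), and then hedge with a fallback construction in case the circulant approach "becomes unwieldy". Your own edge-count computation shows why this cannot be waved away: with $k=\raisebox{1pt}{\big\lceil}\frac{d+3}{2}\raisebox{1pt}{\big\rceil}$ each vertex must have exactly two neighbours in almost every other colour class, so each $G[V_i\cup V_j]$ must be a near-spanning union of a bounded number of paths, and a block-periodic colouring of a circulant has to thread all $\binom{k}{2}$ of these constraints simultaneously for every generator in $S$. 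Until that is exhibited and checked, there is no proof that such a colouring exists for your graphs, and hence no proof of the theorem. (There are also unaddressed minor frictions, e.g.\ for odd $d$ the generator $n/2$ interacts with the block structure and must be shown to be properly coloured and not to close bicoloured $4$-cycles with the short generators.)

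For comparison, the paper sidesteps all of this by choosing the graph to fit the colouring rather than the other way around: $G_{2p+1}$ has vertex set $\{(i,j)\colon 0\leq i,j\leq p+1,\ i\neq j\}$ with $(i,j)$ adjacent to $(k,\ell)$ iff $j=k$ or $i=\ell$, coloured by $f\big((i,j)\big)=i$. Then $G[V_i\cup V_j]$ is, by construction, two stars joined at the endpoints of the edge $\{(i,j),(j,i)\}$ --- trivially a forest --- and vertex-transitivity is immediate because adjacency depends only on equality patterns among the indices (any bijection $h$ of $\{0,\dots,p+1\}$ induces an automorphism $(x,y)\mapsto(h(x),h(y))$). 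The even case is obtained by deleting the perfect matching $\{(i,j)(j,i)\}$. Here the graph sits exactly at the extremal point $n=(p+1)(p+2)$ of the bound in Theorem~\ref{thm:lb chi_a avg deg}, so each colour pair induces a spanning tree and nothing needs to be "threaded". If you want to salvage your route, you would need to write down a concrete $(\pi_b)$ and prove the forest property for each generator pair; as it stands the proposal is a programme, not a proof.
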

\begin{proof}
We first prove the theorem for every odd number \( d \).
To this end, we construct a \( (2p+1) \)-regular graph \( G_{2p+1} \) with acyclic chromatic number equal to \( p+2 \) for all \( p\geq 0 \). 
The vertex set of \( G_{2p+1} \) is \( \{ (i,j)\colon i,j\in\mathbb{Z},\ 0\leq i\leq p+1,\ 0\leq j\leq p+1,\ i\neq j \} \), and \( (i,j) \) is adjacent to \( (k,\ell) \) if \( j=k \) or \( i=\ell \) (or both). 
The graph \( G_5 \) is shown in Figure~\ref{fig: G5}. 
Note that \( G_{2p+1} \) is a \( (2p+1) \)-regular graph; for instance, the vertex \( (0,1) \) in \( G_{2p+1} \) has neighbours \( (1,2),\dots,(1,p+1),(1,0),(2,0),\dots,(p+1,0) \). 
Consider the function \( f\colon V(G_{2p+1})\to \{0,1,\dots,p+1\} \) defined as \( f( (i,j) )=i \) for all \( (i,j)\in V(G_{2p+1}) \). 
Let \( V_i=f^{-1}(i) \) for \( 0\leq i\leq p+1 \). 
It is easy to verify that \( f \) is a \( (p+2) \)-acyclic colouring of \( G_{2p+1} \) (the subgraph of \( G \) induced by \( V_i\cup V_j \) is two stars attached at endvertices of the edge \( \{(i,j),(j,i)\} \); observe that \( u\in V_i \) is adjacent to \( v\in V_j \) if and only if \( u=(i,j) \) or \( v=(j,i) \)). 
Thus, \( \chi_a(G_{2p+1})\leq p+2 \). 
Thanks to Corollary~\ref{cor:lb chi_a}, we have \( \chi_a(G_{2p+1})\geq \raisebox{1pt}{\big\lceil}\frac{(2p+1)+3}{2}\raisebox{1pt}{\big\rceil}= p+2 \).
Hence, \( \chi_a(G_{2p+1})=p+2 \). 
Note that by the definition of the graph \( G_{2p+1} \), whether vertex \( (i, j) \) is adjacent to vertex \( (k,\ell) \) depends only on equality and inequality between indices \( i, j, k, \ell \). 
This ensures that \( G_{2p+1} \) is vertex-transitive. 
We include a short proof for completeness. 
To construct an automorphism \( \psi \) that maps a vertex \( (i,j) \) to another vertex \( (k,\ell) \), first choose a bijection \( h \) from 
\( \{0,1,\dots,p+1\} \)
to itself such that \( h(i)=k \) and \( h(j)=\ell \), and then define \( \psi\big((x,y)\big)=\big(h(x),h(y)\big) \) for all \( (x,y)\in V(G_{2p+1}) \).

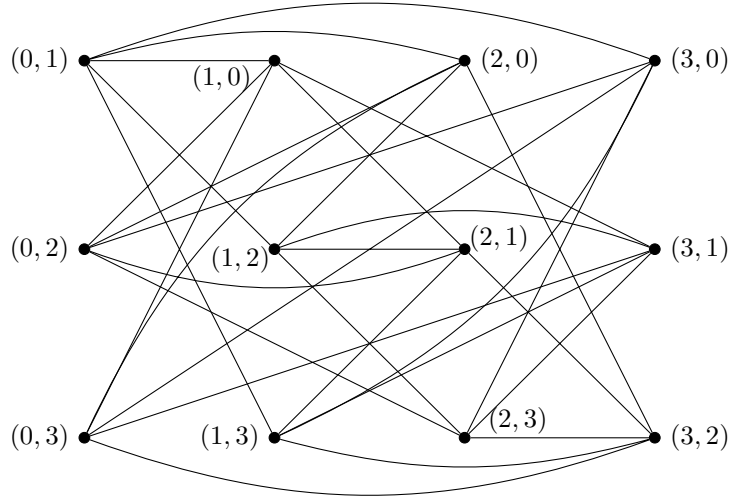
\begin{figure}[hbt]
\centering
\begin{tikzpicture}[scale=2.5]
\path (0,0) node(01)[bigdot][label=left:{\( (0,1) \)}]{}++(0,-1) node(02)[bigdot][label=left:{\( (0,2) \)}]{}++(0,-1) node(03)[bigdot][label=left:{\( (0,3) \)}]{};
\path (01)++(1,0) node(10)[bigdot][label={[xshift=-3pt,yshift=-6.5pt]left:{\( (1,0) \)}}]{}++(0,-1) node(12)[bigdot][label={[xshift=4pt,yshift=-4pt]left:{\( (1,2) \)}}]{}++(0,-1) node(13)[bigdot][label=left:{\( (1,3) \)}]{};
\path (10)++(1,0) node(20)[bigdot][label=right:{\( (2,0) \)}]{}++(0,-1) node(21)[bigdot][label={[xshift=-4pt,yshift=4pt]right:{\( (2,1) \)}}]{}++(0,-1) node(23)[bigdot][label={[xshift=3pt,yshift=6.5pt]right:{\( (2,3) \)}}]{};
\path (20)++(1,0) node(30)[bigdot][label=right:{\( (3,0) \)}]{}++(0,-1) node(31)[bigdot][label=right:{\( (3,1) \)}]{}++(0,-1) node(32)[bigdot][label=right:{\( (3,2) \)}]{};

\draw
(01)--(12)
(01)--(13)
(10)--(02)
(10)--(03)
(12)--(20)
(12)--(23)
(21)--(10)
(21)--(13)
(23)--(30)
(23)--(31)
(32)--(20)
(32)--(21);
\draw
(02)--(23)
(02)--(30)
(31)--(10)
(31)--(03);
\draw
(03) to[bend left=20] (20)
(13) to[bend right=20] (30)
(12) to[bend left=20] (31)
(02) to[bend right=20] (21)
(01) to[bend left=15] (20)
(01) to[bend left=20] (30)
(32) to[bend left=15] (13)
(32) to[bend left=20] (03);

\draw
(01)--(10)   (02)--(20)   (03)--(30)
(12)--(21)   (13)--(31)   
(23)--(32);
\end{tikzpicture}
\caption{The graph \( G_5 \).}
\label{fig: G5}
\end{figure}

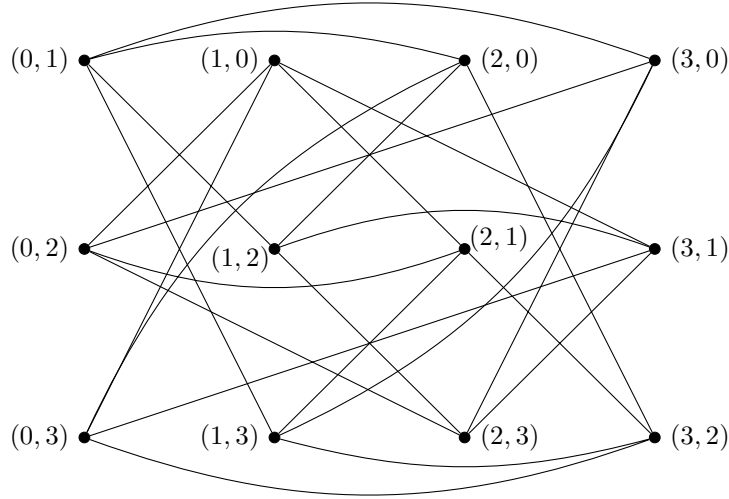
\begin{figure}[hbt]
\centering
\begin{tikzpicture}[scale=2.5]
\path (0,0) node(01)[bigdot][label=left:{\( (0,1) \)}]{}++(0,-1) node(02)[bigdot][label=left:{\( (0,2) \)}]{}++(0,-1) node(03)[bigdot][label=left:{\( (0,3) \)}]{};
\path (01)++(1,0) node(10)[bigdot][label=left:{\( (1,0) \)}]{}++(0,-1) node(12)[bigdot][label={[xshift=4pt,yshift=-4pt]left:{\( (1,2) \)}}]{}++(0,-1) node(13)[bigdot][label=left:{\( (1,3) \)}]{};
\path (10)++(1,0) node(20)[bigdot][label=right:{\( (2,0) \)}]{}++(0,-1) node(21)[bigdot][label={[xshift=-4pt,yshift=4pt]right:{\( (2,1) \)}}]{}++(0,-1) node(23)[bigdot][label=right:{\( (2,3) \)}]{};
\path (20)++(1,0) node(30)[bigdot][label=right:{\( (3,0) \)}]{}++(0,-1) node(31)[bigdot][label=right:{\( (3,1) \)}]{}++(0,-1) node(32)[bigdot][label=right:{\( (3,2) \)}]{};

\draw
(01)--(12)
(01)--(13)
(10)--(02)
(10)--(03)
(12)--(20)
(12)--(23)
(21)--(10)
(21)--(13)
(23)--(30)
(23)--(31)
(32)--(20)
(32)--(21);
\draw
(02)--(23)
(02)--(30)
(31)--(10)
(31)--(03);
\draw
(03) to[bend left=20] (20)
(13) to[bend right=20] (30)
(12) to[bend left=20] (31)
(02) to[bend right=20] (21)
(01) to[bend left=15] (20)
(01) to[bend left=20] (30)
(32) to[bend left=15] (13)
(32) to[bend left=20] (03);
\end{tikzpicture}
\caption{The graph \( G_4 \) obtained from \( G_5 \) by deleting all \( \{(i,j),(j,i)\} \) edges.}
\label{fig: G4}
\end{figure}

Next, we prove the theorem for every even number \( d \), by constructing a \( 2p \)-regular graph \( G_{2p} \) with the acyclic chromatic number equal to \( p+2 \) for all \( p\geq 1 \). 
Observe that for all \( p\geq 1 \), the set \( M \) of edges between vertex pairs \( (i,j) \) and \( (j,i) \) in \( G_{2p+1} \) is a perfect matching of \( G_{2p+1} \). 
We define \( G_{2p}=G_{2p+1}-M \). 
The graph \( G_4 \) is shown in Figure~\ref{fig: G4}. 
Clearly, \( G_{2p} \) is a \( 2p \)-regular subgraph of \( G_{2p+1} \), and thus \( \chi_a(G_{2p})\leq \chi_a(G_{2p+1})\leq p+2 \). 
By Corollary~\ref{cor:lb chi_a}, \( \chi_a(G_{2p})\geq \raisebox{1pt}{\big\lceil}\frac{2p+3}{2}\raisebox{1pt}{\big\rceil}= p+2 \), and thus \( \chi_a(G_{2p})=p+2 \). 
Moreover, the vertex set of \( G_{2p} \) is \( \{ (i,j)\colon i,j\in\mathbb{Z},\ 0\leq i\leq p+1,\ 0\leq j\leq p+1,\ i\neq j \} \), and there is an edge joining \( (i,j) \) to \( (k,\ell) \) if either \( j=k \) or \( i=\ell \) (not both). 
\iftoggle{forThesis}
{ Thus, by its definition, \( G_{2p} \) is vertex-transitive (see Theorem~\ref{thm:intro G2p} for a detailed proof). 
} { Thus, by its definition, \( G_{2p} \) is vertex-transitive (see \cite[Theorem 4]{shalu_cyriac3} for a detailed proof). 
}\end{proof}

\noindent \emph{Remark:} Note that \( G_1\subseteq G_3\subseteq G_5\subseteq \dots \) and \( G_2\subseteq G_4\subseteq G_6\subseteq \dots \) by the definition of the graphs \( G_d \). 

We are now ready to generalise the NP-completeness result of \textsc{\( 3 \)-Acyclic Colourability} in bipartite graphs of maximum degree \( 4 \) \cite{ochem}: we show that for each \( k\geq 3 \), \textsc{\( k \)-Acyclic Colourability} is NP-complete for bipartite graphs of maximum degree \( k+1 \). 
The following simple observations are employed to construct gadgets.

\begin{observation}\label{obs:K_2,k}
Let \( G \) be a graph, and let \( f \) be a \( q \)-acyclic colouring of \( G \), where \( q\leq k \). 
If two vertices \( x \) and \( y \) of \( G \) have \( k \) common neighbours, then \( f(x)\neq f(y) \).
\end{observation}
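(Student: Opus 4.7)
The plan is to argue by contradiction: suppose two vertices $x$ and $y$ with $k$ common neighbours satisfy $f(x) = f(y)$, and then exhibit a bicoloured 4-cycle, contradicting the hypothesis that $f$ is an acyclic colouring.

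First, I would fix $k$ common neighbours $z_1, z_2, \ldots, z_k$ of $x$ and $y$. Since $f$ is a proper colouring and each $z_i$ is adjacent to both $x$ and $y$, we have $f(z_i) \neq f(x) = f(y)$ for every $i$. Hence each $z_i$ receives one of the at most $q-1$ colours distinct from $f(x)$.

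Next, since $q \leq k$, the number of available colours for the $z_i$'s is at most $q - 1 \leq k - 1$, but there are $k$ vertices $z_1, \ldots, z_k$. By the pigeonhole principle, two of them, say $z_i$ and $z_j$ with $i \neq j$, receive the same colour. Then the 4-cycle $x z_i y z_j x$ in $G$ uses only the two colours $f(x)$ and $f(z_i)$, contradicting the assumption that no bicoloured cycle exists under $f$.

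There is essentially no obstacle here; the only subtle points are making sure to invoke properness to conclude $f(z_i) \neq f(x)$ before counting available colours, and checking that the cycle $x z_i y z_j x$ is indeed a cycle (which follows from $i \neq j$ and the fact that $z_i, z_j$ are common neighbours of the distinct vertices $x, y$). The assumption $q \leq k$ is used exactly once, in the pigeonhole step, and tightness is witnessed by $K_{2,k}$ itself, which is $(k+1)$-acyclic colourable with $f(x) \neq f(y)$ forced, matching the statement's threshold.
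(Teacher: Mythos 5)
Your proof is correct and follows essentially the same route as the paper's: locate two common neighbours sharing a colour and exhibit the bicoloured $4$-cycle through $x$ and $y$. The only cosmetic difference is that you apply the pigeonhole after excluding the colour $f(x)$ (giving $q-1\leq k-1$ available colours), whereas the paper derives the colour repetition among the $k$ common neighbours directly from the fact that otherwise no colour would remain for $x$; both are valid.
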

\begin{proof}
Let \( x \) and \( y \) be vertices in \( G \) that share \( k \) neighbours \( w_1,w_2,\dots,w_k \). 
Note that at least two among vertices \( w_1,w_2,\dots,w_k \) should get the same colour, say \( f(w_1)=f(w_2) \) (if not, we need \( k \) colours for vertices \( w_1,w_2,\dots,w_k \), and thus no colour is available for \( x \)). 
If \( f(x)=f(y) \), then \( (x,w_1,y,w_2) \) is a bicoloured cycle; a contradiction. 
Hence,~\( f(x)\neq f(y) \).
\end{proof}

\begin{observation}\label{obs:link in chain}
Let \( G(A\cup B,E) \cong K_{k-1,k} \) where \( A \) and \( B \) are independent sets in \( G \) with cardinality \( k-1 \) and \( k \), respectively. 
Then, \( \chi_a(G)=k \). 
Moreover, for every \( k \)-acyclic colouring \( f \) of \( G \), (i)~vertices in \( A \) should get pairwise distinct colours, and (ii)~all vertices in \( B \) should get the same colour  \( ( \)i.e., \( f(b)=f(b') \) for all \( b,b'\in B) \).
\end{observation}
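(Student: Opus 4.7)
I will prove (i) first, deduce (ii) from it, and only then read off \( \chi_a(G)=k \) from both ends. The three claims are not symmetric in \( A \) and \( B \): Observation~\ref{obs:K_2,k} requires \( k \) common neighbours, which a pair in \( A \) has (all of \( B \)) but a pair in \( B \) does not (only \( k-1 \) of \( A \)). This asymmetry is what forces the order of the argument.

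For (i), I would invoke Observation~\ref{obs:K_2,k} directly. Any two vertices \( a,a'\in A \) have all \( k \) vertices of \( B \) as common neighbours, so in every \( q \)-acyclic colouring of \( G \) with \( q\leq k \), \( a \) and \( a' \) receive distinct colours. In particular, in any \( k \)-acyclic colouring \( f \), the \( k-1 \) vertices of \( A \) receive \( k-1 \) pairwise distinct colours, which is exactly (i). For (ii), fix such an \( f \). By (i) there is exactly one colour \( c \) missing from \( f(A) \). Every vertex \( b\in B \) is adjacent to every vertex of \( A \) (since \( G\cong K_{k-1,k} \)), so \( f(b) \) cannot coincide with any colour used on \( A \); hence \( f(b)=c \) for all \( b\in B \), proving (ii).

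To obtain \( \chi_a(G)=k \), I would handle the two bounds separately. The upper bound \( \chi_a(G)\leq k \) is witnessed by the explicit colouring that assigns the \( k-1 \) vertices of \( A \) pairwise distinct colours and colours every vertex of \( B \) with the remaining colour; this is a proper \( k \)-colouring (since \( A \) and \( B \) are independent), and the only non-edgeless two-colour induced subgraphs are stars \( K_{1,k} \) formed by the colour on \( B \) together with a single colour on \( A \), which are trees. The lower bound \( \chi_a(G)\geq k \) falls out of the reasoning for (ii): in a hypothetical \( (k-1) \)-acyclic colouring, (i) would force the \( k-1 \) vertices of \( A \) to exhaust all \( k-1 \) available colours, leaving no legal colour for any \( b\in B \), a contradiction. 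I do not foresee any genuine obstacle; the one point needing care is simply to resist the temptation to apply Observation~\ref{obs:K_2,k} to pairs in \( B \), where its hypothesis fails.
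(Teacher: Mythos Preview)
Your proposal is correct and follows essentially the same approach as the paper: both use Observation~\ref{obs:K_2,k} on pairs in \(A\) (which share the \(k\) vertices of \(B\) as common neighbours) to obtain (i), then use adjacency of each \(b\in B\) to all of \(A\) to force (ii), and both exhibit the same explicit colouring for the upper bound. The only difference is cosmetic ordering---the paper interleaves the bounds with (i) and (ii), whereas you separate them---and you spell out why the explicit colouring is acyclic (the bichromatic subgraphs are stars), which the paper leaves implicit.
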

\begin{proof}
Let \( f \) be a \( q \)-acyclic colouring of \( G \), where \( q=\chi_a(G) \). 
One can obtain a \( k \)\nobreakdash-acyclic colouring of \( G \) by assigning a permutation of colours \( 1,2,\dots,k-1 \) on vertices in \( A \) and colour~0 on vertices in \( B \). 
Hence, \( q\leq k \). 
Every pair of vertices from \( A \) have \( k \) common neighbours. 
Hence, vertices in \( A \) should get pairwise distinct colours by Observation~\ref{obs:K_2,k}. 
That is, \( k -1 \) distinct colours are used on vertices in \( A \). 
Let \( b\in B \). 
Since all vertices in \( A \) are adjacent to \( b \), at least one more colour, say colour~\( c_1 \), is needed (to colour \( b \)). 
This proves that \( q\geq k \), and thus \( \chi_a(G)=q=k \). 
Since \( k-1 \) distinct colours are used on vertices in \( A \), the remaining colour, namely colour~\( c_1 \), is the only available colour for each vertex in \( B \). 
Thus, all vertices in \( B \) should get colour \( c_1 \). 
\end{proof}

Note that by Observation~\ref{obs:link in chain}, the biclique \( K_{k-1,k} \) has a unique \( k \)-acyclic colouring up to colour swaps. 
\iftoggle{forThesis}
{ We construct a gadget called the \emph{chain gadget} by taking disjoint copies of the biclique \( K_{k-1,k} \) and then adding a matching (see Figure~\ref{fig:acyclic colouring chain gadget}). 
} { 

} 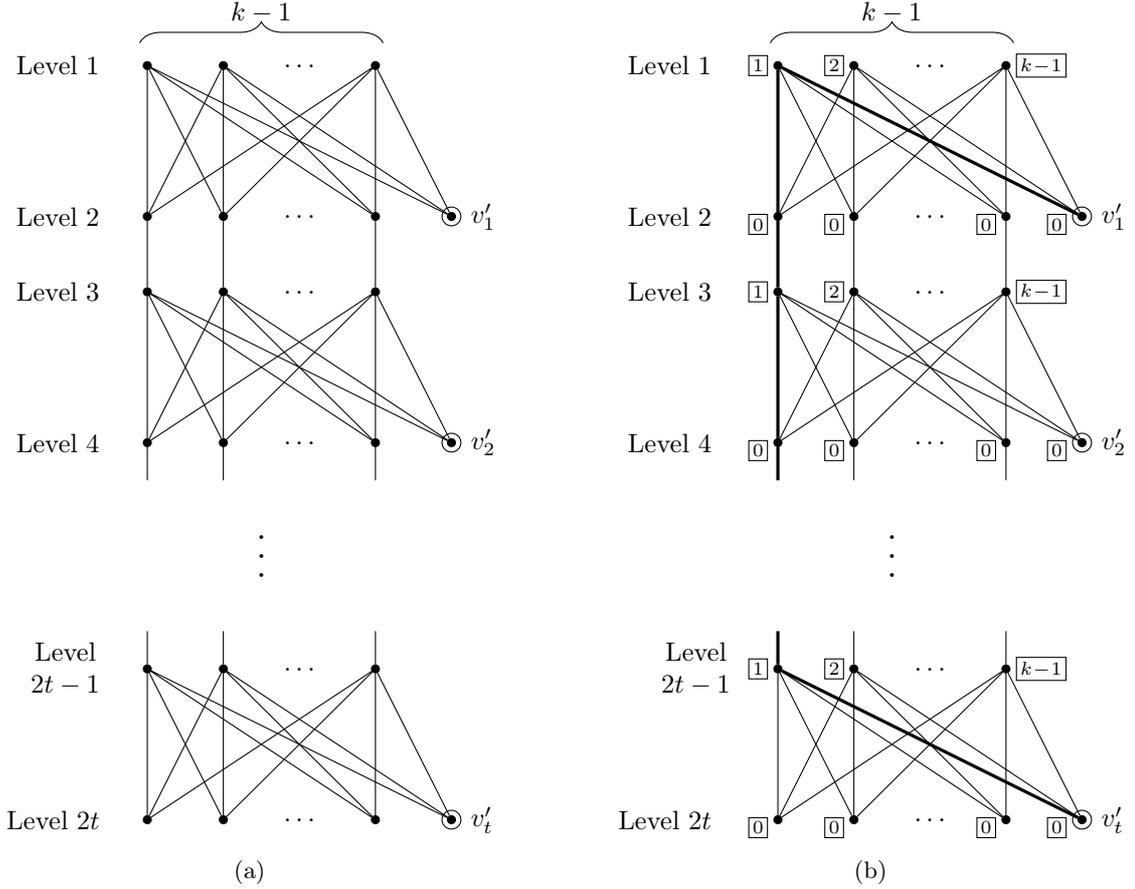
\begin{figure}[hbt]
\centering
\begin{subfigure}[b]{0.47\textwidth}
\centering
\begin{tikzpicture}
\coordinate (level1) node[left=0.5cm]{Level 1};
\path (level1)++(0,-2) coordinate (level2) node[left=0.5cm]{Level 2}++(0,-1) coordinate (level3) node[left=0.5cm]{Level 3} 
++(0,-2) coordinate (level4) node[left=0.5cm]{Level 4} 
++(0,-3) coordinate (level2n-1) node[left=0.5cm,align=center]{Level\\\( 2t-1 \)} 
++(0,-2) coordinate (level2n) node[left=0.5cm]{Level \( 2t \)};
\path (level4)-- node[pos=0.5,xshift=1.5cm,font=\LARGE,rotate=90]{.\,.\,.} (level2n-1);

\path (level1) node(11)[dot]{}++(1,0) node(12)[dot]{}++(2,0) node(1k-1)[dot]{};
\path (12)--node{\( \dots \)}(1k-1);
\path (level2) node(21)[dot]{}++(1,0) node(22)[dot]{}++(2,0) node(2k-1)[dot]{}++(1,0) node(2k)[dot]{} node[terminal][label=right:\( v'_1 \)]{};
\path (22)--node{\( \dots \)}(2k-1);
\path (level3) node(31)[dot]{}++(1,0) node(32)[dot]{}++(2,0) node(3k-1)[dot]{};
\path (32)--node{\( \dots \)}(3k-1);
\path (level4) node(41)[dot]{}++(1,0) node(42)[dot]{}++(2,0) node(4k-1)[dot]{}++(1,0) node(4k)[dot]{} node[terminal][label=right:\( v'_2 \)]{};
\path (42)--node{\( \dots \)}(4k-1);
\path (level2n-1) node(2n-11)[dot]{}++(1,0) node(2n-12)[dot]{}++(2,0) node(2n-1k-1)[dot]{};
\path (2n-12)--node{\( \dots \)}(2n-1k-1);
\path (level2n) node(2n1)[dot]{}++(1,0) node(2n2)[dot]{}++(2,0) node(2nk-1)[dot]{}++(1,0) node(2nk)[dot]{} node[terminal][label=right:\( v'_t \)]{};
\path (2n2)--node{\( \dots \)}(2nk-1);

\draw (11) -- (21)
      (11) -- (22)
      (11) -- (2k-1)
      (11) -- (2k);
\draw (12) -- (21)
      (12) -- (22)
      (12) -- (2k-1)
      (12) -- (2k);
\draw (1k-1)--(21)
      (1k-1)--(22)
      (1k-1)--(2k-1)
      (1k-1)--(2k);
\draw (21)--(31)  (22)--(32)  (2k-1)--(3k-1);
\draw (31) -- (41)
      (31) -- (42)
      (31) -- (4k-1)
      (31) -- (4k);
\draw (32) -- (41)
      (32) -- (42)
      (32) -- (4k-1)
      (32) -- (4k);
\draw (3k-1)--(41)
      (3k-1)--(42)
      (3k-1)--(4k-1)
      (3k-1)--(4k);
\draw (41)--+(0,-0.5)  (42)--+(0,-0.5)  (4k-1)--+(0,-0.5);
\draw (2n-11)--+(0,0.5)  (2n-12)--+(0,0.5)  (2n-1k-1)--+(0,0.5);
\draw (2n-11) -- (2n1)
      (2n-11) -- (2n2)
      (2n-11) -- (2nk-1)
      (2n-11) -- (2nk);
\draw (2n-12) -- (2n1)
      (2n-12) -- (2n2)
      (2n-12) -- (2nk-1)
      (2n-12) -- (2nk);
\draw (2n-1k-1)--(2n1)
      (2n-1k-1)--(2n2)
      (2n-1k-1)--(2nk-1)
      (2n-1k-1)--(2nk);

\path (1k-1)+(0.1,0) coordinate(braceEnd);
\draw [decorate,decoration={brace,amplitude=8pt,raise=3mm}] (11)+(-0.1,0)-- node[above=4.5mm](count){\( k-1 \)} (braceEnd);
\end{tikzpicture}
\caption{}\label{fig:acyclic colouring chain gadget}
\end{subfigure}\hfill
\begin{subfigure}[b]{0.47\textwidth}
\centering
\begin{tikzpicture}
\coordinate (level1) node[left=0.75cm]{Level 1};
\path (level1)++(0,-2) coordinate (level2) node[left=0.75cm]{Level 2}++(0,-1) coordinate (level3) node[left=0.75cm]{Level 3} 
++(0,-2) coordinate (level4) node[left=0.75cm]{Level 4} 
++(0,-3) coordinate (level2n-1) node[left=0.5cm,align=center]{Level\\\( 2t-1 \)} 
++(0,-2) coordinate (level2n) node[left=0.75cm]{Level \( 2t \)};
\path (level4)-- node[pos=0.5,xshift=1.5cm,font=\LARGE,rotate=90]{.\,.\,.} (level2n-1);

\path (level1) node(11)[dot][label={[vcolour]left:1}]{}++(1,0) node(12)[dot][label={[vcolour]left:2}]{}++(2,0) node(1k-1)[dot][label={[vcolour]right:\( k\!-\!1 \)}]{};
\path (12)--node{\( \dots \)}(1k-1);
\path (level2) node(21)[dot][label={[vcolour,yshift=-3pt]left:0}]{}++(1,0) node(22)[dot][label={[vcolour,yshift=-3pt]left:0}]{}++(2,0) node(2k-1)[dot][label={[vcolour,yshift=-3pt]left:0}]{}++(1,0) node(2k)[dot]{} node[terminal][label={[vcolour,yshift=-3pt]left:0}][label=right:\( v'_1 \)]{};
\path (22)--node{\( \dots \)}(2k-1);
\path (level3) node(31)[dot][label={[vcolour]left:1}]{}++(1,0) node(32)[dot][label={[vcolour]left:2}]{}++(2,0) node(3k-1)[dot][label={[vcolour]right:\( k\!-\!1 \)}]{};
\path (32)--node{\( \dots \)}(3k-1);
\path (level4) node(41)[dot][label={[vcolour,yshift=-3pt]left:0}]{}++(1,0) node(42)[dot][label={[vcolour,yshift=-3pt]left:0}]{}++(2,0) node(4k-1)[dot][label={[vcolour,yshift=-3pt]left:0}]{}++(1,0) node(4k)[dot]{} node[terminal][label={[vcolour,yshift=-3pt]left:0}][label=right:\( v'_2 \)]{};
\path (42)--node{\( \dots \)}(4k-1);
\path (level2n-1) node(2n-11)[dot][label={[vcolour]left:1}]{}++(1,0) node(2n-12)[dot][label={[vcolour]left:2}]{}++(2,0) node(2n-1k-1)[dot][label={[vcolour]right:\( k\!-\!1 \)}]{};
\path (2n-12)--node{\( \dots \)}(2n-1k-1);
\path (level2n) node(2n1)[dot][label={[vcolour,yshift=-3pt]left:0}]{}++(1,0) node(2n2)[dot][label={[vcolour,yshift=-3pt]left:0}]{}++(2,0) node(2nk-1)[dot][label={[vcolour,yshift=-3pt]left:0}]{}++(1,0) node(2nk)[dot]{} node[terminal][label={[vcolour,yshift=-3pt]left:0}][label=right:\( v'_t \)]{};
\path (2n2)--node{\( \dots \)}(2nk-1);

\draw (11) -- (22)
      (11) -- (2k-1);
\draw (12) -- (21)
      (12) -- (22)
      (12) -- (2k-1)
      (12) -- (2k);
\draw (1k-1)--(21)
      (1k-1)--(22)
      (1k-1)--(2k-1)
      (1k-1)--(2k);
\draw             (22)--(32)  (2k-1)--(3k-1);
\draw (31) -- (42)
      (31) -- (4k-1)
      (31) -- (4k);
\draw (32) -- (41)
      (32) -- (42)
      (32) -- (4k-1)
      (32) -- (4k);
\draw (3k-1)--(41)
      (3k-1)--(42)
      (3k-1)--(4k-1)
      (3k-1)--(4k);
\draw                  (42)--+(0,-0.5)  (4k-1)--+(0,-0.5);
\draw                    (2n-12)--+(0,0.5)  (2n-1k-1)--+(0,0.5);
\draw (2n-11) -- (2n1)
      (2n-11) -- (2n2)
      (2n-11) -- (2nk-1);
\draw (2n-12) -- (2n1)
      (2n-12) -- (2n2)
      (2n-12) -- (2nk-1)
      (2n-12) -- (2nk);
\draw (2n-1k-1)--(2n1)
      (2n-1k-1)--(2n2)
      (2n-1k-1)--(2nk-1)
      (2n-1k-1)--(2nk);

\draw [very thick]
      (11) -- (21)
      (11) -- (2k)
      (21)--(31)
      (31) -- (41)
      (41)--+(0,-0.5) 
      (2n-11)--+(0,0.5)
      (2n-11) -- (2nk);

\path (1k-1)+(0.1,0) coordinate(braceEnd);
\draw [decorate,decoration={brace,amplitude=8pt,raise=3mm}] (11)+(-0.1,0)-- node[above=4.5mm](count){\( k-1 \)} (braceEnd);
\end{tikzpicture}
\caption{}\label{fig:k-acyclic colouring of chain gadget}
\end{subfigure}\caption{(a) The chain gadget, and (b) a \( k \)-acyclic colouring of the chain gadget.}
\end{figure}
\iftoggle{forThesis}
{ } { For every construction in this paper, only selected vertices within each gadget are allowed to have neighbours outside the gadget. 
We call such vertices as the \emph{terminals} of the gadget, and highlight them in diagrams by drawing a circle around them.

} \iftoggle{forThesis}
{ The chain gadget plays a major role in our constructions. 
} { The graph displayed in Figure~\ref{fig:acyclic colouring chain gadget}, let us call it the chain gadget, plays a major role in our constructions. 
} In Figure~\ref{fig:acyclic colouring chain gadget}, the terminals of the chain gadget are labelled \( v_1', v_2', \dots, v_t' \), where \( t\in \mathbb{N} \). 
\iftoggle{forThesis}
{ Consider the \( k \)-colouring \( h \) of the chain gadget shown in Figure~\ref{fig:k-acyclic colouring of chain gadget}. 
For \( i\in \{1,2,\dots,t\} \), \( h \) assigns colour~0 on vertices in Level~\( 2i \) and a permutation of colours \( 1,2,\dots,k-1 \) on vertices in Level~\( 2i-1 \). 
Observe that each cycle \( C \) in the chain gadget must contain two or more vertices each, from some consecutive levels of the chain gadget: i.e., (i)~there exists an \( i\in \{1,2,\dots,t\} \) such that \( C \) contains at least two vertices in Level~\( 2i-1 \) and at least two vertices in Level~\( 2i \), or (ii)~there exists an \( i\in \{1,2,\dots,t-1\} \) such that \( C \) contains at least two vertices in Level~\( 2i \) and at least two vertices in Level~\( 2i+1 \). 
Since \( h \) assigns colour~0 on even-level vertices and pairwise distinct non-zero colours on vertices in Level~\( 2i-1 \) for \( i\in \{1,2,\dots,t\} \), at least three colours are used by \( h \) on \( C \). 
Since the arbitrary cycle \( C \) is not bicoloured by \( h \), \( h \) is a \( k \)-acyclic colouring of the chain gadget. 
} { The next lemma explains the importance of the chain gadget. 

} \begin{lemma}[Properties of the chain gadget]\label{lem:acyclic colouring chain gadget properties}
The colouring displayed in Figure~\ref{fig:k-acyclic colouring of chain gadget} is the unique \( k \)-acyclic colouring of the chain gadget up to colour swaps and automorphisms. 
In particular, the following hold for every \( k \)-acyclic colouring of the chain gadget:\\
(i)~there is a colour \( c_1 \) such that every terminal (of the chain gadget) gets colour \( c_1 \), and\\
(ii)~for every colour \( c_2 (\neq c_1) \) and every pair of terminals \( x \) and \( y \), there is an \( x,y \)-path coloured using only \( c_1 \) and \( c_2 \).
\end{lemma}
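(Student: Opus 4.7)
The plan is to apply Observation~\ref{obs:link in chain} to each of the $t$ copies of $K_{k-1,k}$ spanning two consecutive levels $2i-1$ and $2i$, and then to propagate the resulting constraint through the intervening matchings. For each $i \in \{1, \ldots, t\}$, the observation forces (a) pairwise distinct colours on the $k-1$ vertices of level $2i-1$, and (b) a common colour $c^{(i)}$ on the $k$ vertices of level $2i$. The matching between levels $2i$ and $2i+1$ requires every position $1, \ldots, k-1$ of level $2i+1$ to avoid $c^{(i)}$; combined with (a) applied at level $2i+1$, the palette on level $2i+1$ is forced to be exactly $\{0, \ldots, k-1\} \setminus \{c^{(i)}\}$. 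Hence the unique remaining colour, $c^{(i)}$, is the monochromatic colour $c^{(i+1)}$ of level $2i+2$. By induction, every even-level vertex carries one common colour $c_1$; since terminals lie on even levels, property~(i) follows.

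For property~(ii), fix any colour $c_2 \neq c_1$. In each odd level $2i-1$, exactly one vertex $u_i$ (at some position $j_i$) has colour $c_2$. The edge $v'_i u_i$ exists by the $K_{k-1,k}$ between levels $2i-1$ and $2i$, and the three-vertex path $u_i - (2i, j_{i+1}) - u_{i+1}$ uses only colours $c_2, c_1, c_2$---its first edge coming from the $K_{k-1,k}$ join and its second from the matching edge at position $j_{i+1}$. Concatenating $v'_i u_i$, these three-vertex paths, and $u_{i'} v'_{i'}$ yields a $v'_i, v'_{i'}$-path using only $c_1$ and $c_2$.

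For uniqueness up to colour swaps and automorphisms, first observe that the colouring in Figure~\ref{fig:k-acyclic colouring of chain gadget} is a valid $k$-acyclic colouring: it is proper, and in the bicoloured subgraph using colour $0$ and any $c \in \{1, \ldots, k-1\}$, one counts $tk + t$ vertices and $tk + t - 1$ edges and checks connectivity through the matchings, so the subgraph is a tree. Now let $f$ be an arbitrary $k$-acyclic colouring of the chain gadget, and let $\tau_i \colon \{1, \ldots, k-1\} \to \{0, \ldots, k-1\} \setminus \{c_1\}$ be the position-to-colour map at level $2i-1$. Pick the colour swap $\sigma$ with $\sigma(c_1) = 0$ and $\sigma \circ \tau_1 = \mathrm{id}$; after applying $\sigma$, every even level is coloured $0$ and level $1$ is canonical, while level $2i-1$ (for $i \geq 2$) acquires the position-to-colour map $\pi_i := \sigma \circ \tau_i$. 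For each $i \in \{1, \ldots, t-1\}$, define the permutation $\psi_i$ of $V(G)$ that sends $(2i, j) \mapsto (2i, \pi_{i+1}(j))$ and $(2i+1, j) \mapsto (2i+1, \pi_{i+1}(j))$ for $j = 1, \ldots, k-1$, and fixes every other vertex (including the terminal $v'_i = (2i, k)$). The joint action preserves the matching between levels $2i$ and $2i+1$, while the flanking complete bipartite joins remain intact under position permutations, so $\psi_i$ is an automorphism; it leaves the monochromatic level $2i$ untouched and renders level $2i+1$ canonical. Because the $\psi_i$ act on pairwise disjoint level pairs $\{2, 3\}, \{4, 5\}, \ldots, \{2t-2, 2t-1\}$, they commute, and their composition is the sought automorphism.

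The main subtlety will be verifying that $\psi_i$ is a graph automorphism; this hinges on permuting \emph{both} matched levels by the same permutation, since permuting only one would break the level $2i$ to level $2i+1$ matching. Once this is established, disjointness of the level pairs makes the composition routine, and the colour-swap plus composed automorphism together realise the desired equivalence of $f$ with the colouring in Figure~\ref{fig:k-acyclic colouring of chain gadget}.
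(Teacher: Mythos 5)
Your proof is correct and follows essentially the same route as the paper's: apply Observation~\ref{obs:link in chain} to each $K_{k-1,k}$ block formed by Levels $2i-1$ and $2i$, then propagate the forced colour through the matchings to show all even levels share one colour and each odd level carries a permutation of the remaining $k-1$ colours. You additionally spell out details the paper leaves to observation or to its supplementary material (the acyclicity check of the displayed colouring, the explicit bicoloured path for property~(ii), and the explicit level-pair automorphisms $\psi_i$), all of which check out.
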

\begin{proof}
Let \( f \) be a \( k \)-acyclic colouring of the chain gadget. 
Observe that for each \( i\in\{1,2,\dots,t\} \), vertices of the chain gadget from Levels \( 2i-1 \) and \( 2i \) together form \( K_{k-1,k} \). 
Therefore, for fixed \( i \), vertices at Level~\( 2i-1 \) get pairwise distinct colours, and vertices at Level~\( 2i \) share the same colour by Observation~\ref{obs:link in chain}. 
Without loss of generality, assume that vertices at Level~2 are coloured~0. 
Since each vertex at Level~3 has a neighbour in Level~2, colour~0 is unavailable in Level~3. 
That means vertices in Level~3 are coloured \( 1,2,\dots,k-1 \) in some order. 
Therefore, the colour shared by vertices at Level~4 must be 0. 
Similarly, for all \( i\in\{1,2,\dots,t\} \), vertices at Level~\( 2i \) are coloured~0 and vertices at Level~\( 2i-1 \) are assigned a permutation of colours \( 1,2,\dots,k-1 \). 
This proves that the colouring displayed in Figure~\ref{fig:k-acyclic colouring of chain gadget} is the unique \( k \)-acyclic colouring of the chain gadget up to colour swaps and automorphisms (observe that applying a permutation of colours on the set of vertices at Level~\( 2j-1 \) for each \( j\in \{1,2,\dots,t\} \) corresponds to an automorphism of the chain gadget; see Section~4 in the supplementary material for a demonstration). 
Since all terminals are coloured~0, Property~(i) is proved.
Observe that in Figure~\ref{fig:k-acyclic colouring of chain gadget}, for every colour \( c_2\neq 0 \) and every pair of terminals \( x \) and \( y \), there is an \( x,y \)-path coloured using only 0 and \( c_2 \) (for \( x=v_1' \), \( y=v_t' \) and \( c_2=1 \), such an \( x,y \)-path is highlighted in Figure~\ref{fig:k-acyclic colouring of chain gadget}). 
This proves Property~(ii).
\end{proof}

The following construction is employed to prove NP-completeness of \textsc{\( k \)-Acyclic Colourability} in bipartite graphs of maximum degree \( k+1 \).

\begin{construct}\label{make:acyclic}
\emph{Parameter:} An integer \( k\geq 3\).\\
\emph{Input:} A graph \( G \) of maximum degree \( 2(k-1) \).\\
\emph{Output:} A bipartite graph \( G' \) of maximum degree \( k+1 \).\\
\emph{Guarantee 1:} \( G \) is \( k \)-colourable if and only if \( G' \) is \( k \)-acyclic colourable.\\
\emph{Guarantee 2:} \( G' \) has only \( O(n) \) vertices where \( n=|V(G)| \).\\
\emph{Steps:}\\
Replace each vertex \( v \) of \( G \) by a chain gadget with \( k\cdot \deg_G(v) \) terminals, and reserve \( k \) terminals each for every neighbour of \( v \) in \( G \). 
For each neighbour \( u \) of \( v \) in \( G \), label the \( k \) terminals of the chain gadget for \( v \) reserved for \( u \) as  \( v_{u1}, v_{u2}, \dots, v_{uk} \). 
For each edge \( e=uv \) of \( G \) and each \( j\in\{1,2,\dots,k\} \), introduce a new vertex \( e_j \) in \( G' \) and join \( e_j \) to \( u_{vj} \) as well as \( v_{uj} \) (see Figure~\ref{fig:colouring to acyclic colouring}). 
An example of the construction is exhibited in Figure~\ref{fig:eg make acyclic v2}. 
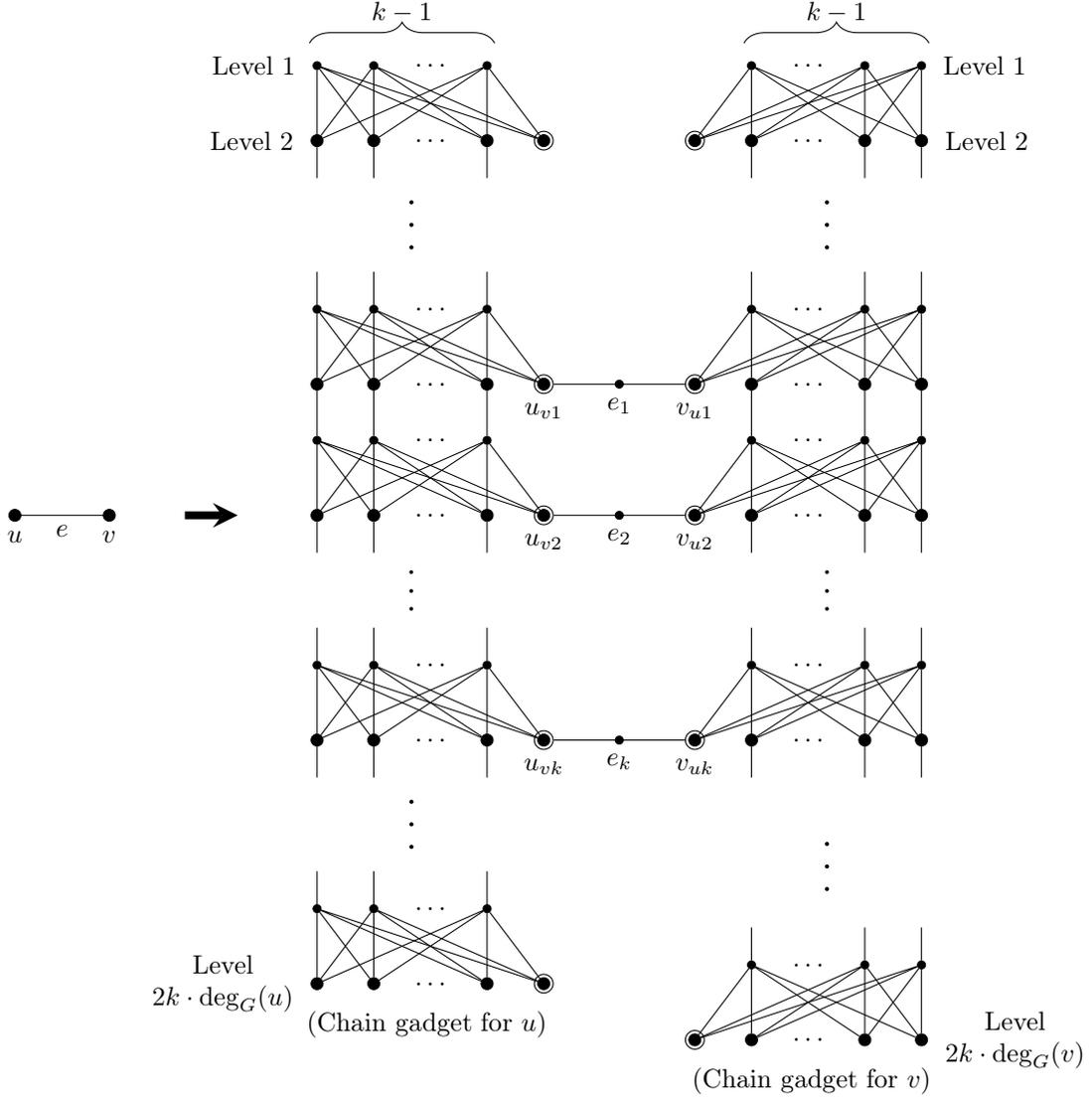
\begin{figure}[hbt]
\centering
\begin{tikzpicture}
\tikzset{
biggerdot/.style={dot,minimum size = 4.5pt}
}

\coordinate (level00); \path (level00)++(0,-1) coordinate (level01)++(0,-2.25) coordinate (level1)++(0,-1) coordinate (level2)++(0,-0.75) coordinate (level3) 
++(0,-1) coordinate (level4)
++(0,-2) coordinate (level2n-1) 
++(0,-1) coordinate (level2n)
++(0,-2.25) coordinate (level2n+1)
++(0,-1) coordinate (level2n+2);
\path (level01)-- node[pos=0.5,xshift=1.25cm,font=\huge,rotate=90]{.\,.\,.} (level1);
\path (level4)-- node[pos=0.5,xshift=1.25cm,font=\LARGE,rotate=90]{.\,.\,.} (level2n-1);
\path (level2n)-- node[pos=0.5,xshift=1.25cm,font=\huge,rotate=90]{.\,.\,.} (level2n+1);

\path (level01)-- node[pos=0.5,xshift=6.75cm,font=\huge,rotate=90]{.\,.\,.} (level1);
\path (level4)-- node[pos=0.5,xshift=6.75cm,font=\LARGE,rotate=90]{.\,.\,.} (level2n-1);
\path (level2n)-- node[pos=0.75,xshift=6.75cm,font=\huge,rotate=90]{.\,.\,.} (level2n+1);
\path (level4) coordinate (chainMid); 

\path (level00) node(001)[dot][label={[label distance=3pt]left:Level 1}]{}++(0.75,0) node(002)[dot]{}++(1.5,0) node(00k-1)[dot]{};
\path (002)--node{\( \dots \)}(00k-1);
\path (level01) node(011)[biggerdot][label={[label distance=3pt]left:Level 2}]{}++(0.75,0) node(012)[biggerdot]{}++(1.5,0) node(01k-1)[biggerdot]{}++(0.75,0) node(01k)[biggerdot]{} node[terminal]{};
\path (012)--node{\( \dots \)}(01k-1);

\path (level1) node(11)[dot]{}++(0.75,0) node(12)[dot]{}++(1.5,0) node(1k-1)[dot]{};
\path (12)--node{\( \dots \)}(1k-1);
\path (level2) node(21)[biggerdot]{}++(0.75,0) node(22)[biggerdot]{}++(1.5,0) node(2k-1)[biggerdot]{}++(0.75,0) node(2k)[biggerdot]{} node(uv1)[terminal][label=below:\( u_{v1} \)]{};
\path (22)--node{\( \dots \)}(2k-1);
\path (level3) node(31)[dot]{}++(0.75,0) node(32)[dot]{}++(1.5,0) node(3k-1)[dot]{};
\path (32)--node{\( \dots \)}(3k-1);
\path (level4) node(41)[biggerdot]{}++(0.75,0) node(42)[biggerdot]{}++(1.5,0) node(4k-1)[biggerdot]{}++(0.75,0) node(4k)[biggerdot]{} node(uv2)[terminal][label=below:\( u_{v2} \)]{};
\path (42)--node{\( \dots \)}(4k-1);
\path (level2n-1) node(2n-11)[dot]{}++(0.75,0) node(2n-12)[dot]{}++(1.5,0) node(2n-1k-1)[dot]{};
\path (2n-12)--node{\( \dots \)}(2n-1k-1);
\path (level2n) node(2n1)[biggerdot]{}++(0.75,0) node(2n2)[biggerdot]{}++(1.5,0) node(2nk-1)[biggerdot]{}++(0.75,0) node(2nk)[biggerdot]{} node(uvk)[terminal][label=below:\( u_{vk} \)]{};
\path (2n2)--node{\( \dots \)}(2nk-1);
\path (level2n+1) node(2n+11)[dot]{}++(0.75,0) node(2n+12)[dot]{}++(1.5,0) node(2n+1k-1)[dot]{};
\path (2n+12)--node{\( \dots \)}(2n+1k-1);
\path (level2n+2) node(2n+21)[biggerdot][label={[label distance=3pt,align=center]left:Level\\\( 2k\cdot \deg_G(u) \)}]{}++(0.75,0) node(2n+22)[biggerdot]{}++(1.5,0) node(2n+2k-1)[biggerdot]{}++(0.75,0) node(2n+2k)[biggerdot]{} node[terminal]{};
\path (2n+22)--node{\( \dots \)}(2n+2k-1);

\draw (001) -- (011)
      (001) -- (012)
      (001) -- (01k-1)
      (001) -- (01k);
\draw (002) -- (011)
      (002) -- (012)
      (002) -- (01k-1)
      (002) -- (01k);
\draw (00k-1)--(011)
      (00k-1)--(012)
      (00k-1)--(01k-1)
      (00k-1)--(01k);
\draw (011)--+(0,-0.5)  (012)--+(0,-0.5)  (01k-1)--+(0,-0.5);
\draw (11)--+(0,0.5)  (12)--+(0,0.5)  (1k-1)--+(0,0.5);
\draw (11) -- (21)
      (11) -- (22)
      (11) -- (2k-1)
      (11) -- (2k);
\draw (12) -- (21)
      (12) -- (22)
      (12) -- (2k-1)
      (12) -- (2k);
\draw (1k-1)--(21)
      (1k-1)--(22)
      (1k-1)--(2k-1)
      (1k-1)--(2k);
\draw (21)--(31)  (22)--(32)  (2k-1)--(3k-1);
\draw (31) -- (41)
      (31) -- (42)
      (31) -- (4k-1)
      (31) -- (4k);
\draw (32) -- (41)
      (32) -- (42)
      (32) -- (4k-1)
      (32) -- (4k);
\draw (3k-1)--(41)
      (3k-1)--(42)
      (3k-1)--(4k-1)
      (3k-1)--(4k);
\draw (41)--+(0,-0.5)  (42)--+(0,-0.5)  (4k-1)--+(0,-0.5);
\draw (2n-11)--+(0,0.5)  (2n-12)--+(0,0.5)  (2n-1k-1)--+(0,0.5);
\draw (2n-11) -- (2n1)
      (2n-11) -- (2n2)
      (2n-11) -- (2nk-1)
      (2n-11) -- (2nk);
\draw (2n-12) -- (2n1)
      (2n-12) -- (2n2)
      (2n-12) -- (2nk-1)
      (2n-12) -- (2nk);
\draw (2n-1k-1)--(2n1)
      (2n-1k-1)--(2n2)
      (2n-1k-1)--(2nk-1)
      (2n-1k-1)--(2nk);
\draw (2n1)--+(0,-0.5)  (2n2)--+(0,-0.5)  (2nk-1)--+(0,-0.5);
\draw (2n+11)--+(0,0.5)  (2n+12)--+(0,0.5)  (2n+1k-1)--+(0,0.5);
\draw (2n+11) -- (2n+21)
      (2n+11) -- (2n+22)
      (2n+11) -- (2n+2k-1)
      (2n+11) -- (2n+2k);
\draw (2n+12) -- (2n+21)
      (2n+12) -- (2n+22)
      (2n+12) -- (2n+2k-1)
      (2n+12) -- (2n+2k);
\draw (2n+1k-1)--(2n+21)
      (2n+1k-1)--(2n+22)
      (2n+1k-1)--(2n+2k-1)
      (2n+1k-1)--(2n+2k);

\path (00k-1)+(0.1,0) coordinate(braceEnd);
\draw [decorate,decoration={brace,amplitude=8pt,raise=3mm}] (001)+(-0.1,0)-- node[above=4.5mm](count){\( k-1 \)} (braceEnd);

\path (2n+22) node[below=7pt,xshift=20pt]{(Chain gadget for \( u \))};

\path (level00)++(8,0) node(001)[dot][label={[label distance=3pt]right:Level 1}]{}++(-0.75,0) node(002)[dot]{}++(-1.5,0) node(00k-1)[dot]{};
\path (002)--node{\( \dots \)}(00k-1);
\path (level01)++(8,0) node(011)[biggerdot][label={[label distance=3pt]right:Level 2}]{}++(-0.75,0) node(012)[biggerdot]{}++(-1.5,0) node(01k-1)[biggerdot]{}++(-0.75,0) node(01k)[biggerdot]{} node[terminal]{};
\path (012)--node{\( \dots \)}(01k-1);

\path (level1)++(8,0) node(11)[dot]{}++(-0.75,0) node(12)[dot]{}++(-1.5,0) node(1k-1)[dot]{};
\path (12)--node{\( \dots \)}(1k-1);
\path (level2)++(8,0) node(21)[biggerdot]{}++(-0.75,0) node(22)[biggerdot]{}++(-1.5,0) node(2k-1)[biggerdot]{}++(-0.75,0) node(2k)[biggerdot]{} node(vu1)[terminal][label=below:\( v_{u1} \)]{};
\path (22)--node{\( \dots \)}(2k-1);
\path (level3)++(8,0) node(31)[dot]{}++(-0.75,0) node(32)[dot]{}++(-1.5,0) node(3k-1)[dot]{};
\path (32)--node{\( \dots \)}(3k-1);
\path (level4)++(8,0) node(41)[biggerdot]{}++(-0.75,0) node(42)[biggerdot]{}++(-1.5,0) node(4k-1)[biggerdot]{}++(-0.75,0) node(4k)[biggerdot]{} node(vu2)[terminal][label=below:\( v_{u2} \)]{};
\path (42)--node{\( \dots \)}(4k-1);
\path (level2n-1)++(8,0) node(2n-11)[dot]{}++(-0.75,0) node(2n-12)[dot]{}++(-1.5,0) node(2n-1k-1)[dot]{};
\path (2n-12)--node{\( \dots \)}(2n-1k-1);
\path (level2n)++(8,0) node(2n1)[biggerdot]{}++(-0.75,0) node(2n2)[biggerdot]{}++(-1.5,0) node(2nk-1)[biggerdot]{}++(-0.75,0) node(2nk)[biggerdot]{} node(vuk)[terminal][label=below:\( v_{uk} \)]{};
\path (2n2)--node{\( \dots \)}(2nk-1);
\path (level2n+1)++(8,-0.75) node(2n+11)[dot]{}++(-0.75,0) node(2n+12)[dot]{}++(-1.5,0) node(2n+1k-1)[dot]{};
\path (2n+12)--node{\( \dots \)}(2n+1k-1);
\path (level2n+2)++(8,-0.75) node(2n+21)[biggerdot][label={[label distance=3pt,align=center]right:Level\\\( 2k\cdot \deg_G(v) \)}]{}++(-0.75,0) node(2n+22)[biggerdot]{}++(-1.5,0) node(2n+2k-1)[biggerdot]{}++(-0.75,0) node(2n+2k)[biggerdot]{} node[terminal]{};
\path (2n+22)--node{\( \dots \)}(2n+2k-1);

\draw (001) -- (011)
      (001) -- (012)
      (001) -- (01k-1)
      (001) -- (01k);
\draw (002) -- (011)
      (002) -- (012)
      (002) -- (01k-1)
      (002) -- (01k);
\draw (00k-1)--(011)
      (00k-1)--(012)
      (00k-1)--(01k-1)
      (00k-1)--(01k);
\draw (011)--+(0,-0.5)  (012)--+(0,-0.5)  (01k-1)--+(0,-0.5);
\draw (11)--+(0,0.5)  (12)--+(0,0.5)  (1k-1)--+(0,0.5);
\draw (11) -- (21)
      (11) -- (22)
      (11) -- (2k-1)
      (11) -- (2k);
\draw (12) -- (21)
      (12) -- (22)
      (12) -- (2k-1)
      (12) -- (2k);
\draw (1k-1)--(21)
      (1k-1)--(22)
      (1k-1)--(2k-1)
      (1k-1)--(2k);
\draw (21)--(31)  (22)--(32)  (2k-1)--(3k-1);
\draw (31) -- (41)
      (31) -- (42)
      (31) -- (4k-1)
      (31) -- (4k);
\draw (32) -- (41)
      (32) -- (42)
      (32) -- (4k-1)
      (32) -- (4k);
\draw (3k-1)--(41)
      (3k-1)--(42)
      (3k-1)--(4k-1)
      (3k-1)--(4k);
\draw (41)--+(0,-0.5)  (42)--+(0,-0.5)  (4k-1)--+(0,-0.5);
\draw (2n-11)--+(0,0.5)  (2n-12)--+(0,0.5)  (2n-1k-1)--+(0,0.5);
\draw (2n-11) -- (2n1)
      (2n-11) -- (2n2)
      (2n-11) -- (2nk-1)
      (2n-11) -- (2nk);
\draw (2n-12) -- (2n1)
      (2n-12) -- (2n2)
      (2n-12) -- (2nk-1)
      (2n-12) -- (2nk);
\draw (2n-1k-1)--(2n1)
      (2n-1k-1)--(2n2)
      (2n-1k-1)--(2nk-1)
      (2n-1k-1)--(2nk);
\draw (2n1)--+(0,-0.5)  (2n2)--+(0,-0.5)  (2nk-1)--+(0,-0.5);
\draw (2n+11)--+(0,0.5)  (2n+12)--+(0,0.5)  (2n+1k-1)--+(0,0.5);
\draw (2n+11) -- (2n+21)
      (2n+11) -- (2n+22)
      (2n+11) -- (2n+2k-1)
      (2n+11) -- (2n+2k);
\draw (2n+12) -- (2n+21)
      (2n+12) -- (2n+22)
      (2n+12) -- (2n+2k-1)
      (2n+12) -- (2n+2k);
\draw (2n+1k-1)--(2n+21)
      (2n+1k-1)--(2n+22)
      (2n+1k-1)--(2n+2k-1)
      (2n+1k-1)--(2n+2k);

\path (00k-1)+(-0.1,0) coordinate(braceEnd);
\draw [decorate,decoration={brace,amplitude=8pt,raise=3mm,mirror}] (001)+(0.1,0)-- node[above=4.5mm](count){\( k-1 \)} (braceEnd);

\path (2n+22) node[below=7pt,xshift=-20pt]{(Chain gadget for \( v \))};

\draw (uv1)--node[dot][label=below:\( e_1 \)]{} (vu1);
\draw (uv2)--node[dot][label=below:\( e_2 \)]{} (vu2);
\draw (uvk)--node[dot][label=below:\( e_k \)]{} (vuk);

\draw (chainMid)++(-4,0) node(u)[biggerdot][label=below:\( u \)]{}--++(1.25,0) node(v)[biggerdot][label=below:\( v \)]{}++(1.0,0) coordinate(from)++(0.7,0) coordinate(to);
\path (u)--node[below]{\( e \)} (v);
\draw [-stealth,draw=black,line width=3pt] (from)--(to);

\end{tikzpicture}
\caption[Construction of \( G' \) from \( G \)]{Construction of \( G' \) from \( G \) (only vertices \( u,v \) in \( G \) and edge \( uv \) in \( G \), and corresponding gadgets in \( G' \) are displayed).}
\label{fig:colouring to acyclic colouring}
\end{figure}

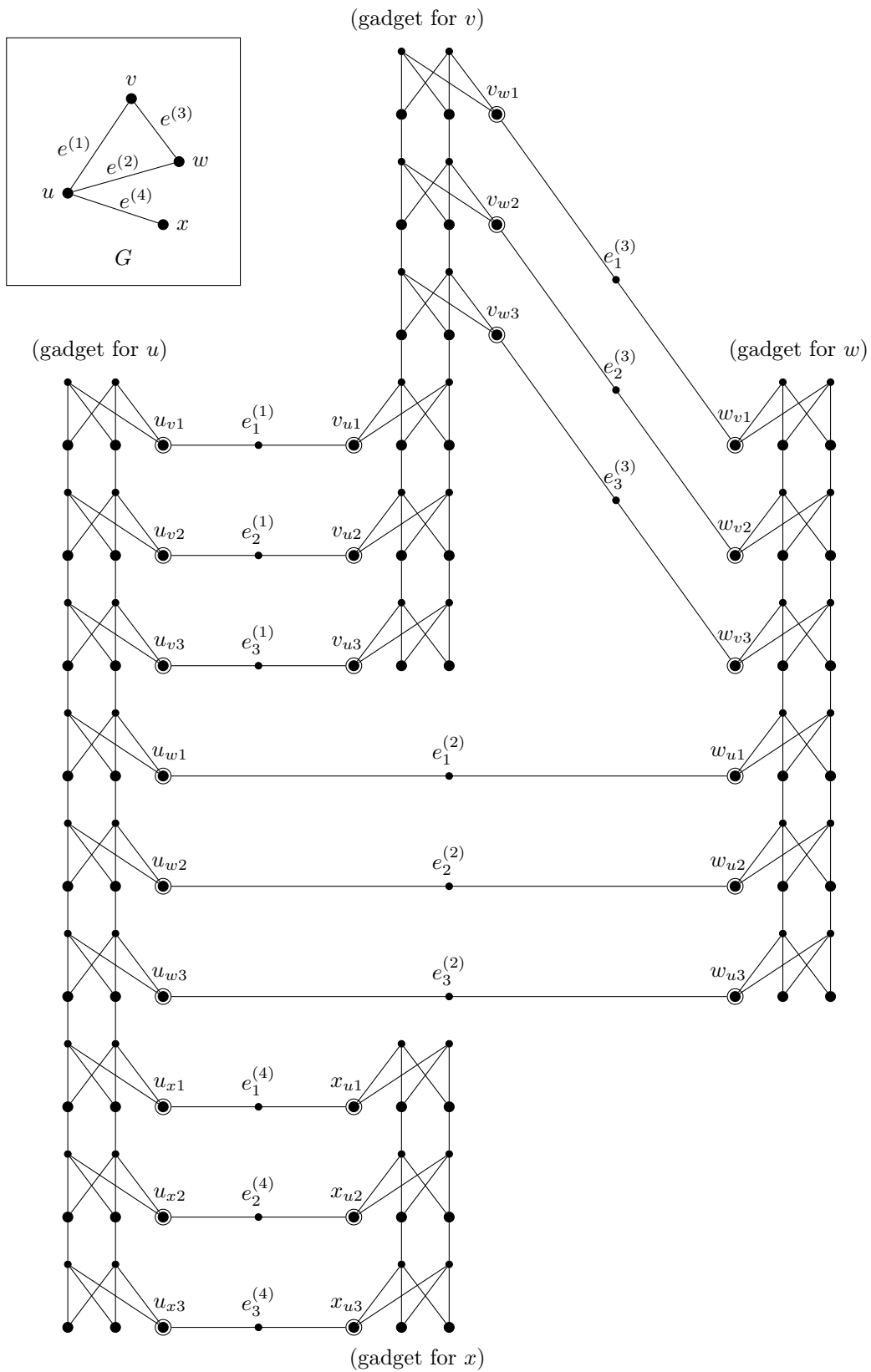
\begin{figure}[hbtp]
\centering 
\begin{tikzpicture}
\tikzset{
biggerdot/.style={dot,minimum size = 4.5pt}
}

\path (0,3) node (v1)[biggerdot][label=left:\( u \)]{};
\path (v1) --+(1,1.5) node (v2)[biggerdot][label=\( v \)]{};
\path (v1) --+(1.75,0.5) node (v3)[biggerdot][label=right:\( w \)]{}--(v2);
\path (v1) --+(1.5,-0.5) node (v4)[biggerdot][label=right:\( x \)]{};
\node [fit={(v1) (v2) (v3) (v4)},draw,inner sep=25pt][label={[yshift=20pt]-90:\( G \)}] {};

\draw (v1) --node[left]{\( e^{(1)} \)} (v2);
\draw (v1) --node[above=-2pt]{\( e^{(2)} \)} (v3);
\draw (v3) --node[above right=-2pt]{\( e^{(3)} \)} (v2);
\draw (v1) --node[pos=0.75,above]{\( e^{(4)} \)} (v4);

\coordinate (level1);
\path (level1)
++(0,-1) coordinate (level2)
++(0,-0.75) coordinate (level3)
++(0,-1) coordinate (level4)
++(0,-0.75) coordinate (level5)
++(0,-1) coordinate (level6)
++(0,-0.75) coordinate (level7)
++(0,-1) coordinate (level8)
++(0,-0.75) coordinate (level9)
++(0,-1) coordinate (level10)
++(0,-0.75) coordinate (level11)
++(0,-1) coordinate (level12)
++(0,-0.75) coordinate (level13)
++(0,-1) coordinate (level14)
++(0,-0.75) coordinate (level15)
++(0,-1) coordinate (level16)
++(0,-0.75) coordinate (level17)
++(0,-1) coordinate (level18);

\path (level1) node(11)[dot]{}++(0.75,0) node(12)[dot]{};
\path (level2) node(21)[biggerdot]{}++(0.75,0) node(22)[biggerdot]{}++(0.75,0) node(2k)[biggerdot]{} node(v11)[terminal][label={[xshift=3pt]\( u_{v1} \)}]{};
\path (level3) node(31)[dot]{}++(0.75,0) node(32)[dot]{};
\path (level4) node(41)[biggerdot]{}++(0.75,0) node(42)[biggerdot]{}++(0.75,0) node(4k)[biggerdot]{} node(v12)[terminal][label={[xshift=3pt]\( u_{v2} \)}]{};
\path (level5) node(51)[dot]{}++(0.75,0) node(52)[dot]{};
\path (level6) node(61)[biggerdot]{}++(0.75,0) node(62)[biggerdot]{}++(0.75,0) node(6k)[biggerdot]{} node(v13)[terminal][label={[xshift=3pt]\( u_{v3} \)}]{};
\path (level7) node(71)[dot]{}++(0.75,0) node(72)[dot]{};
\path (level8) node(81)[biggerdot]{}++(0.75,0) node(82)[biggerdot]{}++(0.75,0) node(8k)[biggerdot]{} node(v14)[terminal][label={[xshift=3pt]\( u_{w1} \)}]{};
\path (level9) node(91)[dot]{}++(0.75,0) node(92)[dot]{};
\path (level10) node(101)[biggerdot]{}++(0.75,0) node(102)[biggerdot]{}++(0.75,0) node(10k)[biggerdot]{} node(v15)[terminal][label={[xshift=3pt]\( u_{w2} \)}]{};
\path (level11) node(111)[dot]{}++(0.75,0) node(112)[dot]{};
\path (level12) node(121)[biggerdot]{}++(0.75,0) node(122)[biggerdot]{}++(0.75,0) node(12k)[biggerdot]{} node(v16)[terminal][label={[xshift=3pt]\( u_{w3} \)}]{};
\path (level13) node(131)[dot]{}++(0.75,0) node(132)[dot]{};
\path (level14) node(141)[biggerdot]{}++(0.75,0) node(142)[biggerdot]{}++(0.75,0) node(14k)[biggerdot]{} node(v17)[terminal][label={[xshift=3pt]\( u_{x1} \)}]{};
\path (level15) node(151)[dot]{}++(0.75,0) node(152)[dot]{};
\path (level16) node(161)[biggerdot]{}++(0.75,0) node(162)[biggerdot]{}++(0.75,0) node(16k)[biggerdot]{} node(v18)[terminal][label={[xshift=3pt]\( u_{x2} \)}]{};
\path (level17) node(171)[dot]{}++(0.75,0) node(172)[dot]{};
\path (level18) node(181)[biggerdot]{}++(0.75,0) node(182)[biggerdot]{}++(0.75,0) node(18k)[biggerdot]{} node(v19)[terminal][label={[xshift=3pt]\( u_{x3} \)}]{};

\draw
(11) -- (21)
(11) -- (22)
(11) -- (2k);
\draw
(12) -- (21)
(12) -- (22)
(12) -- (2k);
\draw (21)--(31)  (22)--(32);
\draw
(31) -- (41)
(31) -- (42)
(31) -- (4k);
\draw
(32) -- (41)
(32) -- (42)
(32) -- (4k);
\draw (41)--(51)  (42)--(52);
\draw
(51) -- (61)
(51) -- (62)
(51) -- (6k);
\draw
(52) -- (61)
(52) -- (62)
(52) -- (6k);
\draw (61)--(71)  (62)--(72);
\draw
(71) -- (81)
(71) -- (82)
(71) -- (8k);
\draw
(72) -- (81)
(72) -- (82)
(72) -- (8k);
\draw (81)--(91)  (82)--(92);
\draw
(91) -- (101)
(91) -- (102)
(91) -- (10k);
\draw
(92) -- (101)
(92) -- (102)
(92) -- (10k);
\draw (101)--(111)  (102)--(112);
\draw
(111) -- (121)
(111) -- (122)
(111) -- (12k);
\draw
(112) -- (121)
(112) -- (122)
(112) -- (12k);
\draw (121)--(131)  (122)--(132);
\draw
(131) -- (141)
(131) -- (142)
(131) -- (14k);
\draw
(132) -- (141)
(132) -- (142)
(132) -- (14k);
\draw (141)--(151)  (142)--(152);
\draw
(151) -- (161)
(151) -- (162)
(151) -- (16k);
\draw
(152) -- (161)
(152) -- (162)
(152) -- (16k);
\draw (161)--(171)  (162)--(172);
\draw 
(171) -- (181)
(171) -- (182)
(171) -- (18k);
\draw 
(172) -- (181)
(172) -- (182)
(172) -- (18k);

\path (level1) +(0.5,0.5) node{(gadget for \( u \))};

\path (level1) ++(6,5.25) coordinate (level1);
\path (level1)
++(0,-1) coordinate (level2) 
++(0,-0.75) coordinate (level3) 
++(0,-1) coordinate (level4) 
++(0,-0.75) coordinate (level5) 
++(0,-1) coordinate (level6) 
++(0,-0.75) coordinate (level7) 
++(0,-1) coordinate (level8) 
++(0,-0.75) coordinate (level9) 
++(0,-1) coordinate (level10) 
++(0,-0.75) coordinate (level11) 
++(0,-1) coordinate (level12);

\path (level1) node(11)[dot]{}++(-0.75,0) node(12)[dot]{};
\path (level2) node(21)[biggerdot]{}++(-0.75,0) node(22)[biggerdot]{}++(1.5,0) node(2k)[biggerdot]{} node(v21)[terminal][label={[xshift=3pt]\( v_{w1} \)}]{};
\path (level3) node(31)[dot]{}++(-0.75,0) node(32)[dot]{};
\path (level4) node(41)[biggerdot]{}++(-0.75,0) node(42)[biggerdot]{}++(1.5,0) node(4k)[biggerdot]{} node(v22)[terminal][label={[xshift=3pt]\( v_{w2} \)}]{};
\path (level5) node(51)[dot]{}++(-0.75,0) node(52)[dot]{};
\path (level6) node(61)[biggerdot]{}++(-0.75,0) node(62)[biggerdot]{}++(1.5,0) node(6k)[biggerdot]{} node(v23)[terminal][label={[xshift=3pt]\( v_{w3} \)}]{};
\path (level7) node(71)[dot]{}++(-0.75,0) node(72)[dot]{};
\path (level8) node(81)[biggerdot]{}++(-0.75,0) node(82)[biggerdot]{}++(-0.75,0) node(8k)[biggerdot]{} node(v24)[terminal][label={[xshift=-3pt]\( v_{u1} \)}]{};
\path (level9) node(91)[dot]{}++(-0.75,0) node(92)[dot]{};
\path (level10) node(101)[biggerdot]{}++(-0.75,0) node(102)[biggerdot]{}++(-0.75,0) node(10k)[biggerdot]{} node(v25)[terminal][label={[xshift=-3pt]\( v_{u2} \)}]{};
\path (level11) node(111)[dot]{}++(-0.75,0) node(112)[dot]{};
\path (level12) node(121)[biggerdot]{}++(-0.75,0) node(122)[biggerdot]{}++(-0.75,0) node(12k)[biggerdot]{} node(v26)[terminal][label={[xshift=-3pt]\( v_{u3} \)}]{};

\draw
(11) -- (21)
(11) -- (22)
(11) -- (2k);
\draw
(12) -- (21)
(12) -- (22)
(12) -- (2k);
\draw (21)--(31)  (22)--(32);
\draw
(31) -- (41)
(31) -- (42)
(31) -- (4k);
\draw
(32) -- (41)
(32) -- (42)
(32) -- (4k);
\draw (41)--(51)  (42)--(52);
\draw
(51) -- (61)
(51) -- (62)
(51) -- (6k);
\draw
(52) -- (61)
(52) -- (62)
(52) -- (6k);
\draw (61)--(71)  (62)--(72);
\draw
(71) -- (81)
(71) -- (82)
(71) -- (8k);
\draw
(72) -- (81)
(72) -- (82)
(72) -- (8k);
\draw (81)--(91)  (82)--(92);
\draw
(91) -- (101)
(91) -- (102)
(91) -- (10k);
\draw
(92) -- (101)
(92) -- (102)
(92) -- (10k);
\draw (101)--(111)  (102)--(112);
\draw
(111) -- (121)
(111) -- (122)
(111) -- (12k);
\draw
(112) -- (121)
(112) -- (122)
(112) -- (12k);

\path (level1) +(-0.5,0.5) node{(gadget for \( v \))};

\path (level1) ++(6,-5.25) coordinate (level1);
\path (level1)
++(0,-1) coordinate (level2) 
++(0,-0.75) coordinate (level3) 
++(0,-1) coordinate (level4) 
++(0,-0.75) coordinate (level5) 
++(0,-1) coordinate (level6) 
++(0,-0.75) coordinate (level7) 
++(0,-1) coordinate (level8) 
++(0,-0.75) coordinate (level9) 
++(0,-1) coordinate (level10) 
++(0,-0.75) coordinate (level11) 
++(0,-1) coordinate (level12);

\path (level1) node(11)[dot]{}++(-0.75,0) node(12)[dot]{};
\path (level2) node(21)[biggerdot]{}++(-0.75,0) node(22)[biggerdot]{}++(-0.75,0) node(2k)[biggerdot]{} node(v31)[terminal][label={[yshift=3pt]\( w_{v1} \)}]{};
\path (level3) node(31)[dot]{}++(-0.75,0) node(32)[dot]{};
\path (level4) node(41)[biggerdot]{}++(-0.75,0) node(42)[biggerdot]{}++(-0.75,0) node(4k)[biggerdot]{} node(v32)[terminal][label={[yshift=3pt]\( w_{v2} \)}]{};
\path (level5) node(51)[dot]{}++(-0.75,0) node(52)[dot]{};
\path (level6) node(61)[biggerdot]{}++(-0.75,0) node(62)[biggerdot]{}++(-0.75,0) node(6k)[biggerdot]{} node(v33)[terminal][label={[yshift=3pt]\( w_{v3} \)}]{};
\path (level7) node(71)[dot]{}++(-0.75,0) node(72)[dot]{};
\path (level8) node(81)[biggerdot]{}++(-0.75,0) node(82)[biggerdot]{}++(-0.75,0) node(8k)[biggerdot]{} node(v34)[terminal][label={[xshift=-3pt]\( w_{u1} \)}]{};
\path (level9) node(91)[dot]{}++(-0.75,0) node(92)[dot]{};
\path (level10) node(101)[biggerdot]{}++(-0.75,0) node(102)[biggerdot]{}++(-0.75,0) node(10k)[biggerdot]{} node(v35)[terminal][label={[xshift=-3pt]\( w_{u2} \)}]{};
\path (level11) node(111)[dot]{}++(-0.75,0) node(112)[dot]{};
\path (level12) node(121)[biggerdot]{}++(-0.75,0) node(122)[biggerdot]{}++(-0.75,0) node(12k)[biggerdot]{} node(v36)[terminal][label={[xshift=-3pt]\( w_{u3} \)}]{};

\draw
(11) -- (21)
(11) -- (22)
(11) -- (2k);
\draw
(12) -- (21)
(12) -- (22)
(12) -- (2k);
\draw (21)--(31)  (22)--(32);
\draw
(31) -- (41)
(31) -- (42)
(31) -- (4k);
\draw
(32) -- (41)
(32) -- (42)
(32) -- (4k);
\draw (41)--(51)  (42)--(52);
\draw
(51) -- (61)
(51) -- (62)
(51) -- (6k);
\draw
(52) -- (61)
(52) -- (62)
(52) -- (6k);
\draw (61)--(71)  (62)--(72);
\draw
(71) -- (81)
(71) -- (82)
(71) -- (8k);
\draw
(72) -- (81)
(72) -- (82)
(72) -- (8k);
\draw (81)--(91)  (82)--(92);
\draw
(91) -- (101)
(91) -- (102)
(91) -- (10k);
\draw
(92) -- (101)
(92) -- (102)
(92) -- (10k);
\draw (101)--(111)  (102)--(112);
\draw
(111) -- (121)
(111) -- (122)
(111) -- (12k);
\draw
(112) -- (121)
(112) -- (122)
(112) -- (12k);

\path (level1) +(-0.5,0.5) node{(gadget for \( w \))};

\path (level13) ++(6,0) coordinate (level1);
\path (level1)
++(0,-1) coordinate (level2) 
++(0,-0.75) coordinate (level3) 
++(0,-1) coordinate (level4) 
++(0,-0.75) coordinate (level5) 
++(0,-1) coordinate (level6);

\path (level1) node(11)[dot]{}++(-0.75,0) node(12)[dot]{};
\path (level2) node(21)[biggerdot]{}++(-0.75,0) node(22)[biggerdot]{}++(-0.75,0) node(2k)[biggerdot]{} node(v41)[terminal][label={[xshift=-3pt]\( x_{u1} \)}]{};
\path (level3) node(31)[dot]{}++(-0.75,0) node(32)[dot]{};
\path (level4) node(41)[biggerdot]{}++(-0.75,0) node(42)[biggerdot]{}++(-0.75,0) node(4k)[biggerdot]{} node(v42)[terminal][label={[xshift=-3pt]\( x_{u2} \)}]{};
\path (level5) node(51)[dot]{}++(-0.75,0) node(52)[dot]{};
\path (level6) node(61)[biggerdot]{}++(-0.75,0) node(62)[biggerdot]{}++(-0.75,0) node(6k)[biggerdot]{} node(v43)[terminal][label={[xshift=-3pt]\( x_{u3} \)}]{};

\draw
(11) -- (21)
(11) -- (22)
(11) -- (2k);
\draw
(12) -- (21)
(12) -- (22)
(12) -- (2k);
\draw (21)--(31)  (22)--(32);
\draw
(31) -- (41)
(31) -- (42)
(31) -- (4k);
\draw
(32) -- (41)
(32) -- (42)
(32) -- (4k);
\draw (41)--(51)  (42)--(52);
\draw
(51) -- (61)
(51) -- (62)
(51) -- (6k);
\draw
(52) -- (61)
(52) -- (62)
(52) -- (6k);

\path (level6) +(-0.5,-0.5) node{(gadget for \( x \))};

\draw (v11) --node[dot][label=\( e^{(1)}_1 \)]{} (v24);
\draw (v12) --node[dot][label=\( e^{(1)}_2 \)]{} (v25);
\draw (v13) --node[dot][label=\( e^{(1)}_3 \)]{} (v26);
\draw (v14) --node[dot][label=\( e^{(2)}_1 \)]{} (v34);
\draw (v15) --node[dot][label=\( e^{(2)}_2 \)]{} (v35);
\draw (v16) --node[dot][label=\( e^{(2)}_3 \)]{} (v36);
\draw (v17) --node[dot][label=\( e^{(4)}_1 \)]{} (v41);
\draw (v18) --node[dot][label=\( e^{(4)}_2 \)]{} (v42);
\draw (v19) --node[dot][label=\( e^{(4)}_3 \)]{} (v43);
\draw (v21) --node[dot][label={[xshift=2pt]\( e^{(3)}_1 \)}]{} (v31);
\draw (v22) --node[dot][label={[xshift=2pt]\( e^{(3)}_2 \)}]{} (v32);
\draw (v23) --node[dot][label={[xshift=2pt]\( e^{(3)}_3 \)}]{} (v33);
\end{tikzpicture}
\caption[Example of Construction~\ref{make:acyclic} with \( k=3 \).]{Example of Construction~\ref{make:acyclic} with \( k=3 \). Graph \( G' \) is displayed large, and graph \( G \) is shown inset (for convenience, a graph of maximum degree 3 rather than 4 is used as \( G \)).}
\label{fig:eg make acyclic v2}
\end{figure}

\( G' \) is clearly bipartite (small dots form one part and big dots form the other part; see Figure~\ref{fig:eg make acyclic v2}). 
\end{construct}
\begin{proof}[Proof of Guarantee 1]
\iftoggle{forThesis}
{ Suppose that \( G \) admits a \( k \)-colouring \( f:V(G)\to\{0,1,\dots,k\text{-}1\} \). 
} { Suppose that \( G \) admits a \( k \)-colouring \( f:V(G)\to\{0,1,\dots,k-1\} \). 
} We produce a \( k \)-colouring \( f' \) of \( G' \) as follows, where \( f':V(G')\to\{0,1,\dots,k-1\} \). 
For each vertex \( v \) of \( G \), colour the chain gadget for \( v \) by the scheme obtained from Figure~\ref{fig:k-acyclic colouring of chain gadget} by swapping colour~0 with \( f(v) \). 
Now, the terminals of the chain gadget for \( v \) have colour \( f(v) \) under \( f' \). 
For each edge \( e=uv \) of \( G \), choose a colour \( c\in\{0,1,\dots,k-1\}\setminus\{f(u),f(v)\} \) and assign \( f'(e_j)=c \) for \( 1\leq j\leq k \). 
Since the paths of the form \( u_{vj},e_j,v_{uj} \) are tricoloured by \( f' \), any cycle in \( G' \) bicoloured by \( f' \) must be entirely within a chain gadget. 
Since chain gadgets are coloured by an acyclic colouring scheme, they do not contain any bicoloured cycle. 
\iftoggle{forThesis}
{ Therefore, \( f' \) is a \( k \)-acyclic colouring of \( G' \).\\[-10pt]
} { Therefore, \( f' \) is a \( k \)-acyclic colouring of \( G' \).
} 

\iftoggle{forThesis}
{ \noindent Conversely, suppose that \( G' \) admits a \( k \)-acyclic colouring \( f':V(G')\to\{0,1,\dots,k\text{-}1\} \). 
} { Conversely, suppose that \( G' \) admits a \( k \)-acyclic colouring \( f':V(G')\to\{0,1,\dots,k-1\} \). 
} By Property~(i) of the chain gadget (see Lemma~\ref{lem:acyclic colouring chain gadget properties}), all terminals of a chain gadget get the same colour. 
\iftoggle{forThesis}
{ For brevity, let us call this colour as ``the colour of the chain gadget''.\\[10pt]
\noindent \parbox{\textwidth}{\textit{Claim 1:} For every edge \( uv \) of \( G \), the colour of the chain gadget for \( u \) differs from the colour of the chain gadget for \( v \) (i.e., \( f'(u_{v1})\neq f'(v_{u1}) \)\,).}\\[10pt]
} { For brevity, let us call this colour as ``the colour of the chain gadget''.\\

\noindent \textit{Claim 1:} For every edge \( uv \) of \( G \), the colour of the chain gadget for \( u \) differs from the colour of the chain gadget for \( v \) (i.e., \( f'(u_{v1})\neq f'(v_{u1}) \)\,).\\

} \noindent Contrary to the claim, assume that \( uv \) is an edge in \( G \), but there is a colour \( c_1 \) such that \( f'(u_{v1})=f'(v_{u1})=c_1 \). 
Clearly, colour \( c_1 \) is unavailable for vertices \( e_j \) (\( 1\leq j\leq k \)). 
Hence, by pigeon-hole principle, at least two vertices among \( e_1,e_2,\dots,e_k \) have the same colour, say \( f(e_p)=f(e_q)=c_2 \) where \( p\neq q \) and \( c_2\neq c_1 \). 
By Property~(ii) of the chain gadget (see Lemma~\ref{lem:acyclic colouring chain gadget properties}), the chain gadget for \( u \) contains a path \( R_1 \) from \( u_{vp} \) to \( u_{vq} \) which is coloured using only \( c_1 \) and \( c_2 \). 
Similarly, the chain gadget for \( v \) contains a path \( R_2 \) from \( v_{uq} \) to \( v_{up} \) which is coloured using only \( c_1 \) and \( c_2 \). 
These paths together with the three-vertex paths \( u_{vq},e_q,v_{uq} \) and \( v_{up},e_p,u_{vp} \) form a cycle in \( G' \) bicoloured by \( f' \) (namely, the cycle \( (u_{vp};R_1;u_{vq},e_q,v_{uq};R_2;v_{up},e_p) \)\,). 
This contradiction proves Claim~1.

Producing a \( k \)-colouring \( f \) of \( G \) from \( f' \) is easy. 
For each vertex \( v \) of \( G \), assign at~\( v \), the colour of the chain gadget for \( v \). 
The function \( f \) is a \( k \)-colouring of \( G \) due to Claim~1.
\end{proof}
\begin{proof}[Proof of Guarantee 2]
Suppose that \( G \) has \( n \) vertices and \( m \) edges. 
For each vertex~\( v \) of~\( G \), the chain gadget for \( v \) has \( (2k-1)\cdot k\cdot \deg_G(v) \) vertices. 
Also, there are only \( km \) vertices outside of chain gadgets. 
Therefore, \( G' \) has \( \left(\sum_{v\in V(G)}(2k^2-k)\deg_G(v)\right)+km=\allowbreak (2k^2-k)\,2m+km=O(m) \) vertices. 
Since \( m\leq n\Delta(G)/2 \) and \( \Delta(G)\leq 2(k-1) \), we have \( m=O(n) \), and thus the number of vertices in \( G' \) is \( O(n) \).
\end{proof}

Since \( G' \) has only \( O(n) \) vertices and the maximum degree of \( G' \) is \( k+1 \), the graph \( G' \) has only \( O(n) \) edges as well (\( \because 2|E(G')|=(k+1)|V(G')| \)\,). 
Hence, Construction~\ref{make:acyclic} requires only time polynomial in the input size. 

\begin{theorem}\label{thm:k-acyclic npc}
For \( k\geq 3 \), \textsc{\( k \)-Acyclic Colourability}\( (\textup{bipartite, }\Delta=k+1) \) is NP-complete, and the problem does not admit a \( 2^{o(n)} \)-time algorithm unless ETH fails.
\end{theorem}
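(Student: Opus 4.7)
The plan is to establish NP-membership directly and then derive both NP-hardness and the ETH lower bound by feeding a bounded-degree version of \textsc{\( k \)-Colourability} into Construction~\ref{make:acyclic}.

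First, for membership in NP, given a candidate colouring \( f \) of \( G' \), we check in polynomial time that \( f \) is a proper \( k \)-colouring and that for every pair \( (i,j) \) of colour classes, the induced subgraph \( G'[V_i\cup V_j] \) is a forest (by a linear-time acyclicity test on each of the \( \binom{k}{2} \) pairs). For NP-hardness, observe that \( \lceil \sqrt{k}\,\rceil\leq k-1 \) for all \( k\geq 3 \), so \( k-1+\lceil \sqrt{k}\,\rceil \leq 2(k-1) \). By Emden-Weinert et al.'s result, \textsc{\( k \)-Colourability} is NP-complete for graphs of maximum degree \( k-1+\lceil \sqrt{k}\,\rceil \), and hence (trivially, by adding isolated vertices or by the padding argument already noted in the excerpt) also for the class of graphs of maximum degree at most \( 2(k-1) \). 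Construction~\ref{make:acyclic} then provides a polynomial-time reduction to \textsc{\( k \)-Acyclic Colourability} on bipartite graphs of maximum degree \( k+1 \): Guarantee~1 gives the equivalence, while bipartiteness and the degree bound \( k+1 \) follow from the construction itself (small/big dot bipartition, and the maximum degree being attained at odd-level non-boundary vertices of the chain gadgets).

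For the ETH lower bound, we chain reductions whose vertex blow-up is linear. Under ETH, \textsc{3-Sat} on \( n \) variables admits no \( 2^{o(n)} \)-time algorithm; by the sparsification lemma we may assume \( O(n) \) clauses. The textbook reduction from \textsc{3-Sat} to \textsc{3-Colourability} produces a graph on \( O(n) \) vertices, so \textsc{3-Colourability} admits no \( 2^{o(n)} \)-time algorithm under ETH. Extending to \( k \) colours by joining a \( (k-3) \)-clique to every vertex adds only \( O(1) \) vertices; a further standard reduction (Emden-Weinert et al.) lowers the maximum degree to \( k-1+\lceil \sqrt{k}\,\rceil\leq 2(k-1) \) with linear vertex blow-up. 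Thus \textsc{\( k \)-Colourability}\( (\Delta\leq 2(k-1)) \) has no \( 2^{o(n)} \)-time algorithm unless ETH fails. Applying Construction~\ref{make:acyclic} and invoking Guarantee~2 (\( |V(G')|=O(n) \)) transports this lower bound to \textsc{\( k \)-Acyclic Colourability}\( (\text{bipartite}, \Delta=k+1) \), completing the proof.

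The main obstacle is essentially bookkeeping rather than mathematical depth: every ingredient (Construction~\ref{make:acyclic} with both its guarantees, Emden-Weinert's bounded-degree NP-hardness, sparsification, and the standard \textsc{3-Sat} \( \to \) \textsc{3-Colourability} \( \to \) \textsc{\( k \)-Colourability} chain) is already available, and the only care needed is to verify that each step in the reduction chain is linear in the number of vertices so that \( 2^{o(n)} \)-hardness is preserved. Once that is observed, both halves of the theorem follow immediately.
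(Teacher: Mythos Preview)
Your proposal is correct and follows essentially the same route as the paper: both arguments feed a bounded-degree instance of \textsc{\( k \)-Colourability} (with \( \Delta\leq 2(k-1) \)) into Construction~\ref{make:acyclic}, invoke Guarantee~1 for the equivalence and Guarantee~2 for the linear vertex blow-up, and thereby obtain NP-hardness and the ETH lower bound simultaneously. The only cosmetic differences are that the paper cites Leven--Galil (edge-colouring of \( k \)-regular graphs) rather than your Emden-Weinert bound for the NP-hardness of the source problem, and that the paper compresses the ETH chain into a one-line citation whereas you spell it out; neither changes the substance.
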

\begin{proof}
Fix an integer \( k\geq 3 \). 
We employ Construction~\ref{make:acyclic} to establish a reduction from \textsc{\( k \)-Colourability}\( (\Delta=2(k-1)) \) to \textsc{\( k \)-Acyclic Colourability}\( (\text{bipartite, }\Delta=k+1) \). 
The problem \textsc{\( k \)-Colourability}\( (\Delta=2(k-1)) \) is NP-complete (in fact, \textsc{\( k \)-Colourability} is NP-complete for line graphs of \( k \)-regular graphs \cite{leven_galil}) and does not admit a \( 2^{o(n)} \)-time algorithm unless ETH fails (the latter can be observed from a reduction of Emden-Weinert et al.~\cite{emden-weinert}).

Let \( G \) be an instance of \textsc{\( k \)-Colourability}\( (\Delta=2(k-1)) \). 
Produce a graph \( G' \) from \( G \) by Construction~\ref{make:acyclic}. 
Note that Construction~\ref{make:acyclic} requires only time polynomial in the size of \( G \). 
By Guarantee~1 of Construction~\ref{make:acyclic}, \( G \) is \( k \)-colourable if and only if \( G' \) is \( k \)-acyclic colourable. 
Besides, the number of vertices in \( G' \) is \( O(n) \) where \( n=|V(G)| \). 
Therefore, the problem \textsc{\( k \)-Acyclic Colourability}\( (\text{bipartite, }\Delta=k+1) \) is NP-complete, and it does not admit a \( 2^{o(n)} \)-time algorithm unless ETH fails.
\end{proof}

For \( k\geq 3 \), \textsc{\( k \)-Acyclic Colourability} is NP-complete for graphs of maximum degree \( k+1 \)  by Theorem~\ref{thm:k-acyclic npc}. 
If \textsc{\( k \)-Acyclic Colourability} is NP-complete for graphs of maximum degree \( d \), then it is NP-complete for graphs of maximum degree \( d+1 \). 
Thus, for \( k\geq 3 \) and \( d\geq k+1 \), \textsc{\( k \)-Acyclic Colourability} in graphs of maximum degree \( d \) is NP-complete. 
On the other hand, for all \( d\geq 2 \) and every \( d \)-regular graph \( G \), we have \( \chi_a(G)\geq \raisebox{1pt}{\big\lceil}\frac{d+3}{2}\raisebox{1pt}{\big\rceil} \), and thus \( G \) is not \( k \)-acyclic colourable for \( k<\raisebox{1pt}{\big\lceil}\frac{d+3}{2}\raisebox{1pt}{\big\rceil} \). 
Hence, for \( k\geq 3 \) and \( d\geq 2k-2 \), \textsc{\( k \)-Acyclic Colourability} in \( d \)-regular graphs is polynomial-time solvable (because \( k<\raisebox{1pt}{\big\lceil}\frac{(2k-2)+3}{2}\raisebox{1pt}{\big\rceil}\leq \raisebox{1pt}{\big\lceil}\frac{d+3}{2}\raisebox{1pt}{\big\rceil} \)). 
As a result, for \( k\geq 3 \) and \( d\geq 2k-2 \), \textsc{\( k \)-Acyclic Colourability} in graphs of maximum degree \( d \) is NP-complete (because \( 2k-2\geq k+1 \)) whereas \textsc{\( k \)-Acyclic Colourability} in \( d \)-regular graphs is polynomial-time solvable. 
In contrast, we use Construction~\ref{make:acyclic regular} below to show that for all \( k\geq 3 \) and \( d\leq 2k-3 \), the complexity of \textsc{\( k \)-Acyclic Colourability} is the same for graphs of maximum degree~\( d \) and \( d \)-regular graphs. 
First, we construct a gadget called the \emph{filler gadget} using the graph \( G_d \). 
Note that for every graph \( H \) with an edge \( xy \) and a \( k \)-acyclic colouring~\( h \), there is no \( x,y \)-path in \( H-xy \) bicoloured by~\( h \) (if not, that path together with the edge \( xy \) forms a cycle in \( H \) bicoloured by \( h \)). 
In particular, the graph \( G_d \) defined in the proof of Theorem~\ref{thm:chi_a attained} is a \( d \)-regular graph with a \( k \)-acyclic colouring \( h \) (because \( \chi_a(G_d)=\raisebox{1pt}{\big\lceil}\frac{d+3}{2}\raisebox{1pt}{\big\rceil}\leq \raisebox{1pt}{\big\lceil}\frac{(2k-3)+3}{2}\raisebox{1pt}{\big\rceil}=k \)), and for each edge \( xy \) of \( G_d \), there is no \( x,y \)-path in \( G_d-xy \) bicoloured by \( h \). 
We choose an edge \( xy \) of \( G_d \), and make the filler gadget using \( G_d-xy \) as shown in Figure~\ref{fig:acyclic regular filler gadget}. 
Note that every non-terminal vertex of the filler gadget has degree~\( d \). 

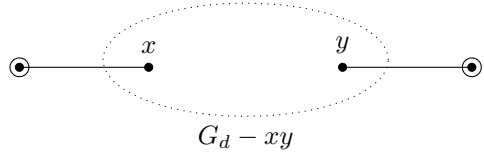
\begin{figure}[hbt]
\centering
\begin{tikzpicture}[scale=0.85]
\path (0,0) node(x)[dot][label=\( x \)]{} ++(3,0) node(y)[dot][label=\( y \)]{};
\path (x) --node[draw,ellipse,dotted,minimum width=3.75cm,minimum height=01.5cm,yshift=0.1cm][label=below:\( G_d-xy \)]{}  (y);

\draw
(x)--+(-2,0) node[dot]{} node[terminal]{}
(y)--+(2,0) node[dot]{} node[terminal]{};
\end{tikzpicture}
\caption[The filler gadget in Construction~\ref{make:acyclic regular}]{The filler gadget in Construction~\ref{make:acyclic regular} (except for vertices \( x \) and \( y \), vertices and edges within the copy of \( G_d-xy \) are not displayed).}
\label{fig:acyclic regular filler gadget}
\end{figure}

We use the vertex identification operation in Construction~\ref{make:acyclic regular}  (see Section~3 in the supplementary material for the definition of vertex idenitification).

\begin{construct}\label{make:acyclic regular}
\emph{Parameters:} Integers \( k\geq 3 \) and \( d\leq 2k-3 \).\\
\emph{Input:} A graph \( G \) of maximum degree \( d \).\\
\emph{Output:} A \( d \)-regular graph \( G' \).\\
\emph{Guarantee:} \( G \) is \( k \)-acyclic colourable if and only if \( G' \) is \( k \)-acyclic colourable.\\
\emph{Steps:}\\
Introduce two copies of \( G \), say \( G^{(1)} \) and \( G^{(2)} \). 
For each \( v\in V(G) \) and \( i\in \{1,2\} \), let \( v^{(i)} \) denote the copy of \( v \) in \( G^{(i)} \). 
For each \( v\in V(G) \), introduce \( d-\deg_G(v) \) filler gadgets, and for each of these filler gadgets, identify its two terminals with \( v^{(1)} \) and \( v^{(2)} \), respectively. 
See Figure~\ref{fig:acyclic regular} for an example. 

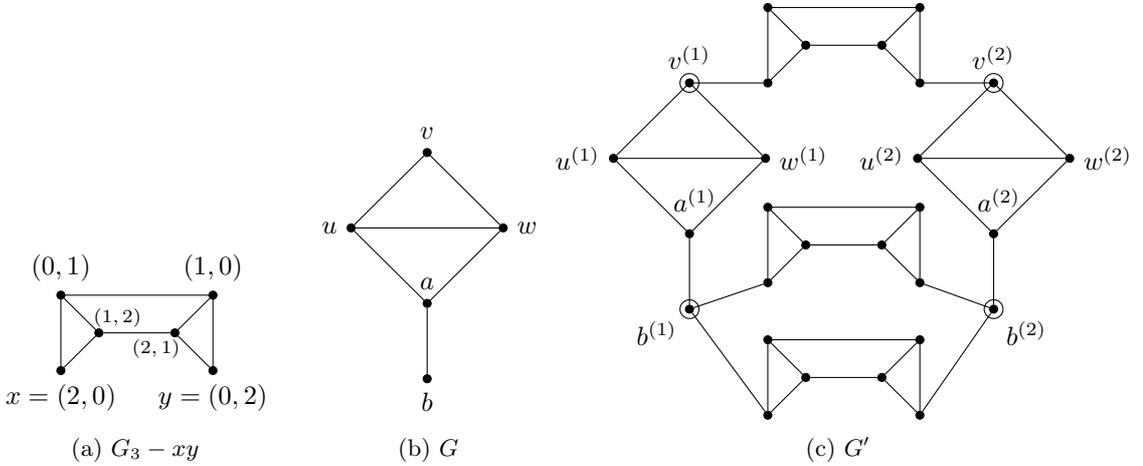
\begin{figure}[hbt]
\centering
\begin{subfigure}[b]{0.3\textwidth}
\centering
\begin{tikzpicture}
\path (0,0) node (x)[dot][label={below:\( x=(2,0) \)}]{};
\path (x) --++(2,0) node (y)[dot][label={below:\( y=(0,2) \)}]{};
\draw (x) --++(0,1) node (01)[dot][label={\( (0,1) \)}]{} --++(2,0) node (10)[dot][label={\( (1,0) \)}]{} --(y);
\draw (x) --+(0.5,0.5) node (12)[dot][label={[font=\scriptsize,xshift=7pt,yshift=-3pt]\( (1,2) \)}]{} -- (01);
\draw (y) --+(-0.5,0.5) node (21)[dot][label={[font=\scriptsize,xshift=-7pt,yshift=3pt]below:\( (2,1) \)}]{} -- (10);
\draw (12)--(21);
\end{tikzpicture}
\caption{\( G_3-xy \)}\end{subfigure}\begin{subfigure}[b]{0.2\textwidth}
\centering
\begin{tikzpicture}
\path (0,0) node (v)[dot][label=\( v \)]{} +(-1,-1) node (u)[dot][label=left:\( u \)]{} ++(1,-1) node (w)[dot][label=right:\( w \)]{} ++(-1,-1) node (a)[dot][label=\( a \)]{} ++(0,-1) node (b)[dot][label=below:\( b \)]{};
\draw (u)--(v)--(w)--(a)--(u)--(w)  (a)--(b);
\end{tikzpicture}
\caption{\( G \)}
\end{subfigure}\begin{subfigure}[b]{0.5\textwidth}
\centering
\begin{tikzpicture}
\path (0,0) node (v1)[dot][label=\( v^{(1)} \)]{} node[terminal]{} +(-1,-1) node (u1)[dot][label=left:\( u^{(1)} \)]{} ++(1,-1) node (w1)[dot][label=right:\( w^{(1)} \)]{} ++(-1,-1) node (a1)[dot][label={[yshift=3pt,xshift=2pt]\( a^{(1)} \)}]{} ++(0,-1) node (b1)[dot][label=below left:\( b^{(1)} \)]{} node[terminal]{};
\draw (u1)--(v1)--(w1)--(a1)--(u1)--(w1)  (a1)--(b1);

\path (4,0) node (v2)[dot][label=\( v^{(2)} \)]{} node[terminal]{} +(-1,-1) node (u2)[dot][label=left:\( u^{(2)} \)]{} ++(1,-1) node (w2)[dot][label=right:\( w^{(2)} \)]{} ++(-1,-1) node (a2)[dot][label={[yshift=3pt,xshift=2pt]\( a^{(2)} \)}]{} ++(0,-1) node (b2)[dot][label=below right:\( b^{(2)} \)]{} node[terminal]{};
\draw (u2)--(v2)--(w2)--(a2)--(u2)--(w2)  (a2)--(b2);

\path (v1) -- node (x)[dot][pos=0.25]{} (v2);
\path (x) ++(2,0) node (y)[dot]{};
\draw (x) --++(0,1) node (01)[dot]{} --++(2,0) node (10)[dot]{} --(y);
\draw (x) --+(0.5,0.5) node (12)[dot]{} -- (01);
\draw (y) --+(-0.5,0.5) node (21)[dot]{} -- (10);
\draw (12)--(21);
\draw (x)--(v1)  (y)--(v2);

\path (b1) -- node (x)[dot][pos=0.25,yshift=10pt]{} (b2);
\path (x) ++(2,0) node (y)[dot]{};
\draw (x) --++(0,1) node (01)[dot]{} --++(2,0) node (10)[dot]{} --(y);
\draw (x) --+(0.5,0.5) node (12)[dot]{} -- (01);
\draw (y) --+(-0.5,0.5) node (21)[dot]{} -- (10);
\draw (12)--(21);
\draw (x)--(b1)  (y)--(b2);

\path (b1) -- node (x)[dot][pos=0.25,yshift=-40pt]{} (b2);
\path (x) ++(2,0) node (y)[dot]{};
\draw (x) --++(0,1) node (01)[dot]{} --++(2,0) node (10)[dot]{} --(y);
\draw (x) --+(0.5,0.5) node (12)[dot]{} -- (01);
\draw (y) --+(-0.5,0.5) node (21)[dot]{} -- (10);
\draw (12)--(21);
\draw (x)--(b1)  (y)--(b2);

\end{tikzpicture}
\caption{\( G' \)}
\end{subfigure}\caption{Example of Construction~\ref{make:acyclic regular} with \( d=3 \).} \label{fig:acyclic regular}
\end{figure}

For every \( v\in V(G) \) and \( i\in \{1,2\} \), the vertex \( v^{(i)} \) has (i)~\( \deg_G(v) \) neighbours in \( G^{(i)} \), and (ii)~one neighbour in each of the \( d-\deg_G(v) \) filler gadgets attached at \( v^{(i)} \); and thus \( v^{(i)} \) has degree \( d \) in \( G' \). 
Recall that every non-terminal vertex of a filler gadget has degree \( d \) in \( G' \). 
Thus, \( G' \) is \( d \)-regular. 
\end{construct}
\noindent \emph{Remark:} In this construction, one can use any \( k \)-acyclic colourable \( d \)-regular graph in place of \( G_d \) to construct a filler gadget. 
We chose a fixed graph, namely \( G_d \), for definiteness.

\begin{proof}[Proof of Guarantee]
If \( G' \) is \( k \)-acyclic colourable, then its subgraph \( G \) is \( k \)-acyclic colourable. 
Conversely, suppose that \( G \) admits a \( k \)-acyclic colouring \( f \). 
We produce a \( k \)-colouring \( f' \) of \( G' \) as follows. 
First, colour both copies of \( G \) using \( f \). 
Next, we colour the filler gadgets. 
For every \( v\in V(G) \), (i)~choose two distinct colours \( c_1,c_2\in\{0,1,\dots,k-1\}\setminus \{f(v)\} \) and a \( k \)-acyclic colouring \( h \) of \( G_d-xy \) such that \( h(x)=c_1 \) and \( h(y)=c_2 \), and (ii)~use \( h \) to complete the colouring of the filler gadgets with terminals \( v^{(1)} \) and \( v^{(2)} \). 
If a filler gadget contains a path \( Q \) between its terminals \( v^{(1)} \) and \( v^{(2)} \), then \( f' \) uses at least three colours (namely, \( c_1, c_2 \) and \( f(v) \)) on \( Q \), and thus \( Q \) is not bicoloured by \( f' \). 
Therefore, paths such as \( Q \) cannot be part of any cycle in \( G' \) bicoloured by \( f' \). 
This ensures that \( f' \) is a \( k \)-acyclic colouring of \( G' \) since \( f' \) colours the copies of \( G \) and the copies of \( G_d-xy \) in \( G' \) using acyclic colouring schemes. 
\end{proof}

The graph \( G' \) contains two copies of \( G \) and at most \( dn \) copies of the filler gadget. 
Since \( G_d \) is a fixed graph, \( G' \) has at most \( 2n+dn\cdot O(1)=O(n) \) vertices and \( 2m+\mbox{\( dn\cdot O(1) \)}=O(m+n) \) edges, where \( m=|E(G)| \) and \( n=|V(G)| \). 
Thus, Construction~\ref{make:acyclic regular} requires only time polynomial in \( m+n \). 

Due to Theorem~\ref{thm:k-acyclic npc}, for all \( k\geq 3 \) and \( d\geq k+1 \), \textsc{\( k \)-Acyclic Colourability} is NP-complete for graphs of maximum degree \( d \). 
For \( k\geq 3 \) and \( d\leq 2k-3 \), Construction~\ref{make:acyclic regular} establishes a reduction from \textsc{\( k \)-Acyclic Colourability}(\( \Delta=d \)) to \textsc{\( k \)-Acyclic Colourability}(\( d \)-regular). 
Hence, for \( k\geq 3 \) and \( d\leq 2k-3 \), if \textsc{\( k \)-Acyclic Colourability} is NP-complete for graphs of maximum degree \( d \), then \textsc{\( k \)-Acyclic Colourability} is NP-complete for \( d \)-regular graphs. 
Clearly, if \textsc{\( k \)-Acyclic Colourability} is NP-complete for \( d \)-regular graphs, then \textsc{\( k \)-Acyclic Colourability} is NP-complete for graphs of maximum degree \( d \). 
Thus, we have the following theorem.
\begin{theorem}\label{thm:acyclic bdd degree to regular}
For all \( k\geq 3 \) and \( d\leq 2k-3 \), \textsc{\( k \)-Acyclic Colourability} is NP-complete for graphs of maximum degree \( d \) if and only if \textsc{\( k \)-Acyclic Colourability} is NP-complete for \( d \)-regular graphs. 
In particular, for \( k\geq 4 \) and \( k+1\leq d\leq 2k-3 \), \textsc{\( k \)-Acyclic Colourability} is NP-complete for \( d \)-regular graphs. 
\qed
\end{theorem}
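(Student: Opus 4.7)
The plan is to deduce the theorem almost immediately from Construction~\ref{make:acyclic regular} and its guarantee, together with Theorem~\ref{thm:k-acyclic npc}. The key observation is that the harder direction has already been set up for us: Construction~\ref{make:acyclic regular} is a polynomial-time transformation that sends any graph $G$ of maximum degree $d$ to a $d$-regular graph $G'$ such that $G$ is $k$-acyclic colourable if and only if $G'$ is. So the main work has essentially been done, and the proof just assembles the pieces.

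First I would dispose of the easy direction of the equivalence: if \textsc{$k$-Acyclic Colourability} is NP-complete on $d$-regular graphs, then it is NP-complete on graphs of maximum degree $d$, since the class of $d$-regular graphs is contained in the class of graphs of maximum degree~$d$ and the same Yes/No answers apply verbatim (membership in NP is clear in both settings because a candidate colouring can be verified in polynomial time). For the converse direction, I would simply cite Construction~\ref{make:acyclic regular}: given an instance $G$ of \textsc{$k$-Acyclic Colourability}$(\Delta = d)$, the construction outputs in polynomial time a $d$-regular graph $G'$ that is an equivalent instance of \textsc{$k$-Acyclic Colourability}$(d\text{-regular})$, which is exactly the polynomial-time many-one reduction required. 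This uses the hypothesis $d \leq 2k-3$ in a crucial way, because that hypothesis is what guarantees that the graph $G_d$ used in the filler gadget is $k$-acyclic colourable (the filler gadget's correctness relies on $\chi_a(G_d) = \lceil (d+3)/2 \rceil \leq k$).

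For the ``In particular'' statement, I would invoke Theorem~\ref{thm:k-acyclic npc}: for $k \geq 3$, the problem \textsc{$k$-Acyclic Colourability} is NP-complete for bipartite graphs of maximum degree $k+1$, and hence for graphs of maximum degree $d$ for any $d \geq k+1$ (adding a disjoint copy of $K_{1,d+1}$ upgrades any max-degree-$(k+1)$ instance to a max-degree-$d$ instance without changing $k$-acyclic colourability). Combined with the equivalence just proved, this yields NP-completeness for $d$-regular graphs whenever $k+1 \leq d \leq 2k-3$. The side condition $k \geq 4$ is forced simply because the range $\{k+1, \dots, 2k-3\}$ is empty for $k = 3$ and nonempty for $k \geq 4$.

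I do not anticipate a real obstacle, since both Construction~\ref{make:acyclic regular} and Theorem~\ref{thm:k-acyclic npc} have already been established earlier in the paper; the only subtle point is making sure the upward-closure argument (max-degree-$(k+1)$ NPC implies max-degree-$d$ NPC for $d \geq k+1$) is stated cleanly, but this is the same $K_{1,d+1}$-padding trick used earlier in the paper for \textsc{$k$-Colourability} and transfers to \textsc{$k$-Acyclic Colourability} without change because the padded component contributes no bicoloured cycle and has chromatic number $2 \leq k$.
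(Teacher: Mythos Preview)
Your proposal is correct and matches the paper's approach essentially line for line: the paper likewise uses the trivial subclass inclusion for one direction, Construction~\ref{make:acyclic regular} as the polynomial-time reduction for the other, and then combines Theorem~\ref{thm:k-acyclic npc} with the \(K_{1,d+1}\)-padding upward-closure argument to obtain the ``In particular'' clause. Your remark that the condition \(k\geq 4\) merely ensures the range \(\{k+1,\dots,2k-3\}\) is nonempty is also exactly the reason implicit in the paper.
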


A modification of Construction~\ref{make:acyclic regular} gives the following theorem. 
\begin{theorem}\label{thm:k-acyclic bdd deg to regular}
For all \( k\geq 3 \) and \( d\leq 2k-3 \), \textsc{\( k \)-Acyclic Colourability} is NP-complete for bipartite graphs of maximum degree \( d \) if and only if \textsc{\( k \)-Acyclic Colourability} is NP-complete for \( d \)-regular bipartite graphs. 
In particular, for \( k\geq 4 \) and \( k+1\leq d\leq 2k-3 \), \textsc{\( k \)-Acyclic Colourability} is NP-complete for \( d \)-regular bipartite graphs. 
\end{theorem}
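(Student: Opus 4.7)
The plan is to imitate Construction~\ref{make:acyclic regular}, but to replace the filler gadget with a bipartite variant and to arrange the two copies of $G$ so that the resulting graph $G'$ remains bipartite. The key ingredient is a bipartite $d$-regular graph with acyclic chromatic number at most $k$, playing the role of $G_d$. I take this to be the \emph{bipartite double cover} $B_d$ of $G_d$: the vertex set is $V(G_d)\times\{0,1\}$ with $(u,0)$ adjacent to $(v,1)$ whenever $uv\in E(G_d)$. By construction $B_d$ is bipartite and $d$-regular, and I claim $\chi_a(B_d)\leq \chi_a(G_d)\leq k$ (recall $d\leq 2k-3$). Indeed, lifting a $k$-acyclic colouring $f$ of $G_d$ via $\tilde f((v,i))=f(v)$ yields a proper colouring of $B_d$; for any two colour classes of $\tilde f$, the induced subgraph is precisely the bipartite double cover of the corresponding two-colour subgraph of $G_d$ (which is a forest), and the bipartite double cover of a bipartite graph is the disjoint union of two copies of it, hence still a forest.

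Fix an edge $xy$ of $B_d$. The \emph{bipartite filler gadget} is $B_d-xy$ with two pendant terminals $t_1,t_2$ attached at $x$ and $y$ respectively, mirroring Figure~\ref{fig:acyclic regular filler gadget} with $G_d$ replaced by $B_d$. Every non-terminal vertex has degree $d$, and since any $k$-acyclic colouring of $B_d$ assigns different colours to the adjacent vertices $x,y$, a colour permutation yields a $k$-acyclic colouring $h$ of $B_d-xy$ with $h(x)=c_1$ and $h(y)=c_2$ for any prescribed distinct colours $c_1,c_2$. Now modify Construction~\ref{make:acyclic regular} as follows: form two copies $G^{(1)},G^{(2)}$ of the bipartite input graph $G$ with parts $A\cup B$, and at each vertex $v$ attach $d-\deg_G(v)$ bipartite filler gadgets by identifying $t_1,t_2$ with $v^{(1)}$ and $v^{(2)}$ (choosing the orientation consistent with the bipartitions). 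To preserve bipartiteness, I use a \emph{flipped} bipartition on $G^{(2)}$, so that $V(G')$ is partitioned as $(A^{(1)}\cup B^{(2)})$ and $(B^{(1)}\cup A^{(2)})$. Then $v^{(1)}$ and $v^{(2)}$ lie in opposite parts, matching the opposite parts of $t_1,t_2$ inside each bipartite filler gadget, so $G'$ is bipartite and $d$-regular.

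The correctness argument is now identical to that of Construction~\ref{make:acyclic regular}. If $G'$ is $k$-acyclic colourable then so is its subgraph $G$. Conversely, given a $k$-acyclic colouring $f$ of $G$, colour both copies of $G$ by $f$ and, for each filler gadget at $v$, pick distinct $c_1,c_2\in\{0,\ldots,k-1\}\setminus\{f(v)\}$ and colour $B_d-xy$ by the corresponding $h$; every $v^{(1)},v^{(2)}$-path through a filler gadget then uses the three distinct colours $f(v),c_1,c_2$, so no cycle through a filler gadget can be bicoloured. For the \emph{in particular} part, Theorem~\ref{thm:k-acyclic npc} gives NP-completeness of \textsc{$k$-Acyclic Colourability} on bipartite graphs of maximum degree $k+1$; a disjoint copy of the bipartite star $K_{1,d+1}$ (which is $k$-acyclic colourable with two colours for $k\geq 2$) extends this to bipartite graphs of maximum degree $d$ for any $d\geq k+1$, and the bipartite reduction above yields NP-completeness for $d$-regular bipartite graphs when $k+1\leq d\leq 2k-3$. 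The main obstacle is the design of a bipartite filler gadget inheriting both the $d$-regularity and the $k$-acyclic colourability of $G_d$; the bipartite-double-cover trick, together with the forest-lifting argument for two-colour subgraphs, is what makes this work.
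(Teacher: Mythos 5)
Your proposal is correct and takes essentially the same route as the paper: the paper also reruns Construction~\ref{make:acyclic regular} with \( G_d \) replaced by a bipartite \( k \)-acyclic colourable \( d \)-regular graph \( H \), and its bipartiteness check is exactly your ``flip the bipartition of \( G^{(2)} \)'' device, phrased there as a \( 2 \)-colouring with \( f'(v^{(2)})=1-f(v) \). You do supply one ingredient the paper leaves implicit: the paper never exhibits a bipartite \( k \)-acyclic colourable \( d \)-regular graph to serve as \( H \), whereas your bipartite double cover \( B_d=G_d\times K_2 \), together with the observation that each two-colour-class subgraph of \( B_d \) is the double cover of a forest and hence a disjoint union of two copies of that forest, gives a clean explicit choice (and this existence step is not vacuous --- e.g.\ \( K_{d,d} \) has acyclic chromatic number \( d+1 \) and so fails once \( d\geq k \)). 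One cosmetic slip: to pad a bipartite instance of maximum degree \( k+1 \) up to maximum degree \( d \) you should add a disjoint \( K_{1,d} \), not \( K_{1,d+1} \).
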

\begin{proof}
First, we prove that for \( k\geq 3 \) and \( d\leq 2k-3 \), if \textsc{\( k \)-Acyclic Colourability} is NP-complete for bipartite graphs of maximum degree \( d \), then \textsc{\( k \)-Acyclic Colourability} is NP-complete for \( d \)-regular bipartite graphs. 
As remarked earlier, any \( k \)-acyclic colourable \( d \)-regular graph \( H \) can replace the graph \( G_d \) in Construction~\ref{make:acyclic regular} without affecting the guarantee in the construction. 
Hence, it is enough to prove the following claim to complete the proof.\\[3pt]
\noindent \textit{Claim:} \( G' \) is bipartite if \( G \) and \( H \) are bipartite.\\[3pt]
To prove the claim, assume that \( G \) and \( H \) are bipartite. 
Let \( f \) be a 2-colouring of \( G \) and \( h \) be a 2-colouring of \( H \). 
Since \( xy \) is an edge in \( H \), we have \( h(x)\neq h(y) \). 
Without loss of generality, assume that \( h(x)=0 \) and \( h(y)=1 \). 
It suffices to produce a 2-colouring \( f' \) of \( G' \). 
For each \( v\in V(G) \), assign \( f'(v^{(1)})=f(v) \) and \( f'(v^{(2)})=1-f(v) \). 
\begin{figure}[hbt]
\centering
\begin{subfigure}[b]{0.5\textwidth}
\centering
\begin{tikzpicture}[scale=0.85]
\path (0,0) node(x)[dot][label=below:\( x \)][label={[vcolour]0}]{} ++(3,0) node(y)[dot][label=below:\( y \)][label={[vcolour]1}]{};
\path (x) --node[draw,ellipse,dotted,minimum width=3.75cm,minimum height=01.5cm,yshift=0.1cm][label=below:\( H-xy \)]{}  (y);

\draw
(x)--+(-2,0) node[dot]{} node[terminal][label=below:\( v^{(1)} \)][label={[vcolour]1}]{}
(y)--+(2,0) node[dot]{} node[terminal][label=below:\( v^{(2)} \)][label={[vcolour]0}]{};
\end{tikzpicture}
\caption{When \( f(v)=1 \)}
\label{fig:regular filler gadget acyclic colouring scheme1}
\end{subfigure}\begin{subfigure}[b]{0.5\textwidth}
\centering
\begin{tikzpicture}[scale=0.85]
\path (0,0) node(x)[dot][label=below:\( x \)][label={[vcolour]1}]{} ++(3,0) node(y)[dot][label=below:\( y \)][label={[vcolour]0}]{};
\path (x) --node[draw,ellipse,dotted,minimum width=3.75cm,minimum height=01.5cm,yshift=0.1cm][label=below:\( H-xy \)]{}  (y);

\draw
(x)--+(-2,0) node[dot]{} node[terminal][label=below:\( v^{(1)} \)][label={[vcolour]0}]{}
(y)--+(2,0) node[dot]{} node[terminal][label=below:\( v^{(2)} \)][label={[vcolour]1}]{};
\end{tikzpicture}
\caption{When \( f(v)=0 \)}
\label{fig:regular filler gadget acyclic colouring scheme2}
\end{subfigure}\caption[2-colouring schemes for filler gadget when \( H \) is bipartite]{2-colouring schemes for filler gadget when \( H \) is bipartite. For each vertex \( z \) in \( H-xy \), scheme~(a) assigns colour \( h(z) \) whereas scheme~(b) assigns colour \( 1-h(z) \).}
\label{fig:regular filler gadget acyclic colouring}
\end{figure}
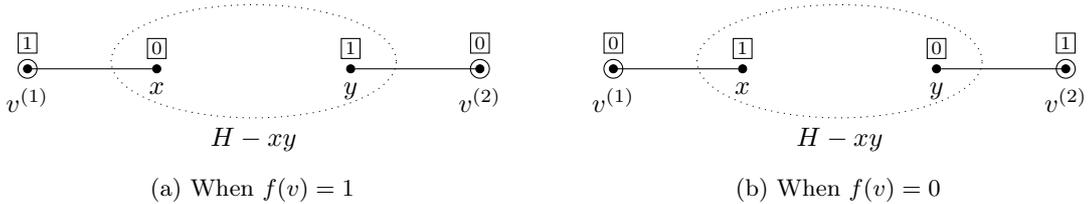
Next, for each \( v\in V(G) \), we colour the filler gadgets with terminals \( v^{(1)} \) and \( v^{(2)} \). 
If \( f'(v^{(1)})=1 \), colour the filler gadget by the scheme in Figure~\ref{fig:regular filler gadget acyclic colouring scheme1}; otherwise, use the scheme in Figure~\ref{fig:regular filler gadget acyclic colouring scheme2}. 
It is easy to verify that \( f' \) is indeed a 2-colouring of \( G' \).

We know that for \( k\geq 4 \) and \( d\geq k+1 \), \textsc{\( k \)-Acyclic Colourability} is NP-complete for bipartite graphs of maximum degree \( d \). 
As a result, for \( k\geq 4 \) and \( k+1\leq d\leq 2k-3 \), \textsc{\( k \)-Acyclic Colourability} is NP-complete for \( d \)-regular bipartite graphs. 
\end{proof}

\subsection[Results on \( L_a^{(k)} \)]{\boldmath Results on \( L_a^{(k)} \)\label{sec:acyclic colouring points of hardness transition}
} Recall that for \( k\geq 3 \), \( L_a^{(k)} \) is the least integer \( d \) such that \textsc{\( k \)-Acyclic Colourability} in graphs of maximum degree \( d \) is NP-complete. 
Bear in mind that we assume P \( \neq \) NP throughout this paper; thus, NP is partitioned into three classes: P, NPC and NPI~\cite{paschos}.
If a problem in NP is not NP-complete (i.e., not in NPC), then it is either in P or in NPI. 
By the definition of \( L_a^{(k)} \), \textsc{\( k \)-Acyclic Colourability}(\( \Delta=d \)) is not NP-complete for \( d<L_a^{(k)} \), which means that the problem is either in P or in NPI (we do not know which is the case).

Theorem~\ref{thm:k-acyclic npc} proved that for \( k\geq 3 \), \textsc{\( k \)-Acyclic Colourability} is NP-complete for graphs of maximum degree \( k+1 \), and thus \( L_a^{(k)}\leq k+1 \). 
It is easy to observe that for \( d\leq 2 \), the acyclic chromatic number of a graph of maximum degree \( d \) can be computed in polynomial time. 
Hence, \( L_a^{(k)}\geq 3 \) for all \( k\geq 3 \). 
Next, we show that \( 0.38\, k^{\,3/4}<L_a^{(k)}\leq k+1 \) for all \( k\geq 3 \). 
\begin{observation}
For \( d\leq 0.38\, k^{\,3/4} \), \textsc{\( k \)-Acyclic Colourability} is polynomial-time solvable for graphs of maximum degree \( d\). 
Hence, \( L_a^{(k)}>0.38\, k^{\,3/4} \) for all \( k\geq 3 \). 
\end{observation}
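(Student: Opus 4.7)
The plan is to invoke the Sereni--Volec bound \( \chi_a(G)\leq 2.835\, d^{4/3}+d \) for every graph \( G \) of maximum degree \( d \) (cited in Subsection~\ref{sec:acyclic coloring known}) and to verify that, under the hypothesis \( d\leq 0.38\, k^{\,3/4} \), this bound already yields \( \chi_a(G)\leq k \). Once this is established, every graph of maximum degree \( d \) is \( k \)-acyclic colourable, so \textsc{\( k \)-Acyclic Colourability}\( (\Delta=d) \) is answered affirmatively in constant time and is trivially in P.

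Concretely, I would first compute \( 0.38^{4/3} \). Raising \( d\leq 0.38\, k^{\,3/4} \) to the \( 4/3 \) power gives \( d^{4/3}\leq 0.38^{4/3}\,k \), and a direct computation shows \( 0.38^{4/3}\approx 0.2758 \). Hence
\[
2.835\, d^{4/3}+d\ \leq\ 2.835\cdot 0.38^{4/3}\,k\ +\ 0.38\,k^{\,3/4}\ \leq\ 0.782\,k\ +\ 0.38\,k^{\,3/4}.
\]
To conclude \( 2.835\, d^{4/3}+d\leq k \), it suffices that \( 0.38\,k^{\,3/4}\leq 0.218\,k \), equivalently \( k\geq (0.38/0.218)^{4}\approx 9.21 \). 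Thus for every \( k\geq 10 \) the Sereni--Volec bound already gives \( \chi_a(G)\leq k \) whenever \( \Delta(G)\leq 0.38\, k^{\,3/4} \).

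It then remains to cover the small range \( 3\leq k\leq 9 \), which is the only place where a routine check is needed. For each of these values, \( 0.38\, k^{\,3/4}<2 \) (indeed, \( 0.38\cdot 9^{3/4}\approx 1.97 \)), so the hypothesis \( d\leq 0.38\, k^{\,3/4} \) forces \( d\leq 1 \). A graph of maximum degree at most \( 1 \) is a disjoint union of edges and isolated vertices, hence 2-acyclic colourable, hence \( k \)-acyclic colourable for any \( k\geq 2 \). So the conclusion holds in this regime as well.

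Since there is no genuine obstacle beyond the arithmetic, the only thing to be careful about is keeping the constants consistent across the three ingredients: the \( 2.835 \) from Sereni--Volec, the \( 0.38 \) in the hypothesis, and the boundary \( k\approx 9 \) between the two cases. Combining the two cases gives: for every \( k\geq 3 \) and every \( d\leq 0.38\, k^{\,3/4} \), every graph of maximum degree \( d \) is \( k \)-acyclic colourable, so \textsc{\( k \)-Acyclic Colourability}\( (\Delta=d) \) is in P and \( L_a^{(k)}>0.38\, k^{\,3/4} \).
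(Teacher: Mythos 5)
Your proposal is correct and follows essentially the same route as the paper: both plug the maximum-degree hypothesis into the Sereni--Volec bound \( \chi_a(G)\leq 2.835\,d^{4/3}+d \) and then dispose of the lower-order term and the degenerate cases by elementary arithmetic. The only (cosmetic) difference is that the paper absorbs the \( +d \) term via \( d<0.694\,d^{4/3} \) for \( d\geq 3 \) and treats \( d\leq 2 \) separately, whereas you split on \( k\geq 10 \) versus \( 3\leq k\leq 9 \); both case analyses close the same gap.
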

\begin{proof}
\iftoggle{extended}
{The observation is trivially true for \( d\leq 2 \). 
Suppose that \( d\geq 3 \). 
Hence, \( d<0.693\, d^{4/3} \). 
Substituting this in the bound of Sereni and Volec~\cite{sereni_volec} gives \( \chi_a(G)<(2.835+0.693)d^{4/3}=3.528\,d^{4/3} \) for every graph \( G \) with maximum degree \( d \). 
Thus, when \( k\geq 3.528\,d^{4/3} \), a graph of maximum degree \( d \) is always \( k \)-acyclic colourable. 
Therefore, for \( d\leq (3.528)^{-3/4}k^{3/4}\leq 0.38\,k^{3/4} \), \textsc{\( k \)-Acyclic Colourability} is polynomial-time solvable for graphs of maximum degree \( d\). 
}{The observation is trivially true for \( d\leq 2 \). 
It suffices to prove the observation for \( d\geq 3 \). 
Suppose that \( d\geq 3 \). 
Sereni and Volec~\cite{sereni_volec} proved that \( \chi_a(G)<2.835\,d^{\,4/3}+d \) for every graph \( G \) of maximum degree \( d \). 
Since \( d\geq 3 \), we have \( d^{\,1/3}\geq 3^{1/3}>1/0.694 \), and thus \( d<0.694\, d^{\,4/3} \). 
Thus, \( \chi_a(G)<(2.835+0.694)d^{\,4/3}=3.529\,d^{\,4/3} \) for every graph \( G \) of maximum degree \( d \). 
Hence, when \( k\geq 3.529\,d^{\,4/3} \), every graph of maximum degree \( d \) is \( k \)-acyclic colourable. 
In other words, if \( d\leq (3.529)^{-3/4}k^{\,3/4} \), then every graph of maximum degree \( d \) is \( k \)-acyclic colourable. 
Note that \( 0.38<(3.529)^{-3/4} \). 
Hence, if \( d\leq 0.38 k^{3/4} \), then \( d\leq (3.529)^{-3/4}k^{3/4} \). 
Therefore, for \( d\leq 0.38\,k^{3/4} \), every graph of maximum degree \( d \) is \( k \)-acyclic colourable, and thus \textsc{\( k \)-Acyclic Colourability} is polynomial-time solvable for graphs of maximum degree \( d \). 
}As a result, \( L_a^{(k)}>0.38\,k^{3/4} \) for all \( k\geq 3 \). 
\end{proof}

By Corollary~\ref{cor:lb chi_a}, \( \chi_a(G)\geq \lceil (d+3)/2\rceil \) for every \( d \)-regular graph \( G \). 
Hence, for \( k\geq 3 \), \textsc{\( k \)\nobreakdash-Acyclic Colourability} in \( d \)-regular graphs is polynomial-time solvable for each \mbox{\( d\geq  2k-2 \)} (because the answer is always `no').

Fix an integer \( k\geq 4 \). 
Theorem~\ref{thm:k-acyclic bdd deg to regular} proved that for \( d\leq 2k-3 \), \textsc{\( k \)-Acyclic Colourability} in graphs of maximum degree \( d \) is NP-complete if and only if \textsc{\( k \)-Acyclic Colourability} in \( d \)\nobreakdash-regular graphs is NP-complete. 
By the definition of \( L_a^{(k)} \), \textsc{\( k \)-Acyclic Colourability} in graphs of maximum degree \( d \) is NP-complete for \( d=L_a^{(k)} \), and not NP-complete for \( d<L_a^{(k)} \). 
Hence, for \( d<L_a^{(k)} \), \textsc{\( k \)-Acyclic Colourability} in \( d \)-regular graphs is not NP-complete by Theorem~\ref{thm:k-acyclic bdd deg to regular}. 
We know that \textsc{\( k \)-Acyclic Colourability} in graphs of maximum degree \( d \) is NP-complete for \( d\geq L_a^{(k)} \). 
As a result, for \( d \) in the range \( L_a^{(k)}\leq d\leq 2k-3 \), \textsc{\( k \)-Acyclic Colourability} in \( d \)-regular graphs is also NP-complete by Theorem~\ref{thm:k-acyclic bdd deg to regular}. 
Moreover, for \( d\geq  2k-2 \), \textsc{\( k \)\nobreakdash-Acyclic Colourability} in \( d \)-regular graphs is polynomial-time solvable (see the previous paragraph). 
Thus, we have the following theorem. 
\begin{theorem}\label{thm:k acyclic regular}
For \( k\geq 4 \), \textsc{\( k \)-Acyclic Colourability} in \( d \)-regular graphs is NP-complete if and only if \( L_a^{(k)}\leq d\leq 2k-3 \).
\qed
\end{theorem}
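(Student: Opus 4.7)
The plan is to derive Theorem~\ref{thm:k acyclic regular} as an immediate consequence of Theorem~\ref{thm:acyclic bdd degree to regular} (the max-degree/regular equivalence for $d\leq 2k-3$), together with the definition of $L_a^{(k)}$ and the regular lower bound in Corollary~\ref{cor:lb chi_a}. I would split the biconditional into its two directions and treat each in a line or two.

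For the backward direction, suppose $L_a^{(k)}\leq d\leq 2k-3$. By the minimality in the definition of $L_a^{(k)}$, \textsc{$k$-Acyclic Colourability}$(\Delta=d)$ is NP-complete, and because $d\leq 2k-3$ the hypothesis of Theorem~\ref{thm:acyclic bdd degree to regular} is satisfied; that theorem then transfers NP-completeness to the class of $d$-regular graphs.

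For the forward direction I argue by contrapositive, ruling out $d<L_a^{(k)}$ and $d\geq 2k-2$ separately. If $d<L_a^{(k)}$, then since we already know $L_a^{(k)}\leq k+1\leq 2k-3$ for $k\geq 4$, we still have $d\leq 2k-3$, so Theorem~\ref{thm:acyclic bdd degree to regular} applies; by the definition of $L_a^{(k)}$ the bounded max-degree version is not NP-complete, and hence (via the theorem) neither is the $d$-regular version. If instead $d\geq 2k-2$, then Corollary~\ref{cor:lb chi_a} gives $\chi_a(G)\geq \raisebox{1pt}{\big\lceil}\frac{d+3}{2}\raisebox{1pt}{\big\rceil}\geq \raisebox{1pt}{\big\lceil}\frac{2k+1}{2}\raisebox{1pt}{\big\rceil}=k+1>k$ for every $d$-regular graph $G$, so no $d$-regular graph admits a $k$-acyclic colouring and the problem is trivially in P; under our standing assumption $\text{P}\neq\text{NP}$ it is therefore not NP-complete.

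There is essentially no obstacle: the heavy lifting has already been carried out in the filler-gadget reduction of Construction~\ref{make:acyclic regular} and in Theorem~\ref{thm:acyclic bdd degree to regular}, and the matching upper bound on the interesting range of $d$ was established in Corollary~\ref{cor:lb chi_a}. The only sanity check is that $k+1\leq 2k-3$ for $k\geq 4$, which is precisely why the restriction $k\geq 4$ appears in the statement and why the contrapositive case $d<L_a^{(k)}$ falls inside the applicability range of Theorem~\ref{thm:acyclic bdd degree to regular}.
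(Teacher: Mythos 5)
Your proposal is correct and matches the paper's own argument: the paper likewise combines the max-degree/regular equivalence for \( d\leq 2k-3 \) with the definition of \( L_a^{(k)} \) (plus the upward monotonicity of NP-completeness in \( \Delta \)) and disposes of \( d\geq 2k-2 \) via Corollary~\ref{cor:lb chi_a}, so the answer is always ``no'' there. Your sanity check that \( d<L_a^{(k)}\leq k+1\leq 2k-3 \) keeps the contrapositive case inside the equivalence theorem's range is exactly the point where \( k\geq 4 \) is needed, as in the paper.
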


\section{Unique Acyclic Colouring}\label{sec:unique soln}
In this section, we borrow gadgets from Construction~\ref{make:acyclic} to obtain results on unique acyclic colouring. 
See Section~\ref{sec:intro unique solution problems} for definitions of problems related to unique colouring and unique acyclic colouring. 
We prove that for all \( k\geq 3 \), \textsc{Another \( k \)-Acyclic Colouring}\,[\( \mathcal{R}_{\text{swap}} \)] and \textsc{Another \( k \)-Acyclic Colouring}\,[\( \mathcal{R}_{\text{swap+auto}} \)] are NP-complete and thus the corresponding unique solution problems are coNP-hard. 
We also show that \textsc{Unique 3-Acyclic Colouring}\,[\( \mathcal{R}_{\text{swap+auto}} \)] is coNP-hard for the class of bipartite graphs of maximum degree~4. 

We start with a simple construction that enables us to transform \textsc{Unique 3-Colouring}\,[\( \mathcal{R}_{\text{swap}} \)] to \textsc{Unique 3-Acyclic Colouring}\,[\( \mathcal{R}_{\text{swap}} \)]. 
Observation~\ref{obs:link in chain} proved that the biclique \( K_{k-1,k} \) has a unique \( k \)-acyclic colouring up to colour swaps. In particular, \( K_{2,3} \) has a unique 3-acyclic colouring up to colour swaps. 
The following construction makes use of this. 

\begin{construct}\label{make:3-acyclic R_swap max degree 24}
\emph{Input:} A graph \( G \) of maximum degree 8.\\
\emph{Output:} A 2-degenerate bipartite graph \( G' \) of maximum degree~24.\\
\emph{Guarantee:} The number of \( 3 \)-colourings of \( G \) up to colour swaps equals the number of \( 3 \)-acyclic colourings of \( G' \) up to colour swaps.\\
\emph{Steps:}\\
Replace each edge \( e=uv \) of \( G \) by a copy of the complete bipartite graph \( K_{2,3} \) with parts \( \{u,v\} \) and \( \{e_1,e_2,e_3\} \) where \( e_1,e_2 \) and \( e_3 \) are newly introduced vertices. 
To produce a 2-degenerate ordering of \( V(G') \), list the new vertices \( e_i \) followed by the members of \( V(G) \).
\end{construct}
\begin{proof}[Proof of Guarantee]
For each 3-colouring \( f \) of \( G \) that uses colours 0,1 and 2, there exists a unique 3-colouring extension of \( f \) into \( V(G') \). 
The extension is unique because for each edge \( e=uv \) of \( G \), exactly one colour, namely the unique colour in \( \{0,1,2\}\setminus \{f(u),f(v)\} \), is available for \( e_1,e_2 \) and \( e_3 \). 
Let \( \phi \) be the function that maps each 3\nobreakdash-colouring of \( G \) to its unique 3-colouring extension into \( V(G') \). 
Clearly, \( \phi \) is a function from the set of 3-colourings of \( G \) to the set of 3-colourings of \( G' \), and \( \phi \) is one-one.

We claim that \( \text{Range}(\phi) \) is precisely the set of 3-acyclic colourings of \( G' \). 
For every 3-colouring \( f \) of \( G \), we know that \( \phi(f) \) is a 3-acyclic colouring of \( G' \) because every cycle in \( G' \) contains a path of the form \( u,e_i,v \) where \( u,v\in V(G) \) and \( uv\in E(G) \), and such paths are tricoloured by \( \phi(f) \) since \( \phi(f)(u)=f(u)\neq f(v)=\phi(f)(v) \). 
Moreover, each 3-acyclic colouring of the complete bipartite graph \( K_{2,3} \) with parts \( \{u,v\} \) and \( \{e_1,e_2,e_3\} \) assigns different colours to \( u \) and \( v \) by the special case \( k=3 \) of Observation~\ref{obs:K_2,k}. 
Thus, every 3-acyclic colouring \( f' \) of \( G' \) has a preimage under \( \phi \), namely the restriction of \( f' \) to \( V(G) \). 
This proves that \( \phi \) is onto. 
Thus, there exists a one-one function \( \phi \) from the set of 3-colourings of \( G \) onto the set of 3-acyclic colourings of \( G' \). 
Furthermore, two 3-colourings \( f_1 \) and \( f_2 \) of \( G \) are non-equivalent under colour swaps if and only if the 3-acyclic colourings \( \phi(f_1) \) and \( \phi(f_2) \) of \( G' \) are non-equivalent under colour swaps.
\end{proof}

\begin{theorem}\label{thm:3-acyclic R_swap max degree 24}
For 2-degenerate bipartite graphs of maximum degree~24, \textsc{Another 3-Acyclic Colouring}\,\( [\mathcal{R}_{\text{swap}}] \) is NP-complete and \textsc{Unique 3-Acyclic Colouring}\,\( [\mathcal{R}_{\text{swap}}] \) is coNP-hard.
\end{theorem}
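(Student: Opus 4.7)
The plan is to invoke Construction~\ref{make:3-acyclic R_swap max degree 24} to reduce \textsc{Another 3-Colouring}\,[\( \mathcal{R}_{\text{swap}} \)] on graphs of maximum degree~8---which is NP-complete, as noted at the end of Section~\ref{sec:intro unique solution problems}---to \textsc{Another 3-Acyclic Colouring}\,[\( \mathcal{R}_{\text{swap}} \)] on 2-degenerate bipartite graphs of maximum degree~24. Given an instance \((G,f)\) of the former with \(\Delta(G)\le 8\), I would apply Construction~\ref{make:3-acyclic R_swap max degree 24} to produce a 2-degenerate bipartite graph \(G'\) of maximum degree at most \(3\cdot 8=24\), and take \(f' = \phi(f)\), the unique 3-acyclic colouring extension of \(f\) to \(V(G')\) supplied by the construction.

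Correctness is immediate from the Guarantee of Construction~\ref{make:3-acyclic R_swap max degree 24}: the map \(\phi\) is a bijection between 3-colourings of \(G\) and 3-acyclic colourings of \(G'\) that preserves \(\mathcal{R}_{\text{swap}}\). Hence \((G,f)\) admits another 3-colouring up to colour swaps if and only if \((G',f')\) admits another 3-acyclic colouring up to colour swaps. Membership in NP is routine: guess a 3-colouring \(f''\) of \(G'\), verify in polynomial time that \(f''\) is a proper colouring, that each of the three pairs of colour classes induces a forest, and that none of the \(3!=6\) permutations of the three colours converts \(f'\) into \(f''\). This establishes NP-completeness of \textsc{Another 3-Acyclic Colouring}\,\( [\mathcal{R}_{\text{swap}}] \) on the target graph class.

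For the coNP-hardness of \textsc{Unique 3-Acyclic Colouring}\,[\( \mathcal{R}_{\text{swap}} \)], I would use the standard transference from ``another'' to ``unique'' solution problems. Since \(G\) is 3-colourable (the 3-colouring \(f\) is provided), its image \(G'\) admits the 3-acyclic colouring \(f'=\phi(f)\), so \(G'\) has at least one equivalence class under \(\mathcal{R}_{\text{swap}}(G',3)\). Therefore, \((G',f')\) is a yes-instance of \textsc{Another 3-Acyclic Colouring}\,[\( \mathcal{R}_{\text{swap}} \)] precisely when \(G'\) has two or more equivalence classes, i.e.\ precisely when \(G'\) is a no-instance of \textsc{Unique 3-Acyclic Colouring}\,[\( \mathcal{R}_{\text{swap}} \)]. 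The mapping \((G,f)\mapsto G'\) produced by Construction~\ref{make:3-acyclic R_swap max degree 24} is therefore a polynomial-time many-one reduction from the NP-hard problem \textsc{Another 3-Colouring}\,[\( \mathcal{R}_{\text{swap}} \)]\,(\(\Delta=8\)) to the complement of \textsc{Unique 3-Acyclic Colouring}\,[\( \mathcal{R}_{\text{swap}} \)] on 2-degenerate bipartite graphs of maximum degree~24, yielding the desired coNP-hardness. No substantial obstacle is expected, since Construction~\ref{make:3-acyclic R_swap max degree 24} already performs the combinatorial heavy lifting; the only care required is in aligning the two equivalence relations, which is taken care of by the bijection \(\phi\).
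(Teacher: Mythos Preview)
Your proposal is correct and follows essentially the same approach as the paper: reduce from \textsc{Another 3-Colouring}\,[\(\mathcal{R}_{\text{swap}}\)]\,(\(\Delta=8\)) via Construction~\ref{make:3-acyclic R_swap max degree 24}, use the bijection \(\phi\) to carry the instance \((G,f)\) to \((G',\phi(f))\), and conclude both the NP-completeness and the coNP-hardness from the Guarantee. Your version adds an explicit NP-membership check and spells out the ``another''-to-``unique'' transference, but these are routine elaborations of the same argument.
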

\begin{proof}
The reduction is from \textsc{Another 3-Colouring}\,[\( \mathcal{R}_{\text{swap}} \)]\,(\( \Delta=8 \)). 
Let \( (G,f) \) be an instance of the source problem. 
From \( G \), produce a graph \( G' \) by Construction~\ref{make:3-acyclic R_swap max degree 24}. 
In Construction~\ref{make:3-acyclic R_swap max degree 24}, it is established that there is a bijection \( \phi \) from the set of 3-colourings of \( G \) to the set of 3-acyclic colourings of \( G' \). 
In particular, \( f'=\phi(f) \) is a 3-acyclic colouring of \( G' \). 
By the guarantee in Construction~\ref{make:3-acyclic R_swap max degree 24}, the number of 3-colourings of \( G \) up to colour swaps is equal to the number of 3-acyclic colourings of \( G' \) up to colour swaps. 
Therefore, \( (G,f) \) is a yes instance of \textsc{Another 3-Colouring}\,[\( \mathcal{R}_{\text{swap}} \)] if and only if \( (G',f') \) is a yes instance of \textsc{Another 3-Acyclic Colouring}\,[\( \mathcal{R}_{\text{swap}} \)]. 
This proves that \textsc{Another 3-Acyclic Colouring}\,[\( \mathcal{R}_{\text{swap}} \)] is NP-complete for 2\nobreakdash-degenerate bipartite graphs of maximum degree~24, and thus the problem \textsc{Unique 3-Acyclic Colouring}\,[\( \mathcal{R}_{\text{swap}} \)] is coNP-hard for the same class.
\end{proof}

Next, we show that \textsc{Another \( k \)-Acyclic Colouring}\,[\( \mathcal{R}_{\text{swap}} \)] is NP-complete and \textsc{Unique \( k \)-Acyclic Colouring}\,[\( \mathcal{R}_{\text{swap}} \)] in coNP-hard for all \( k\geq 3 \). 
Let \( G \) be a graph, and let \( G' \) be the graph obtained by adding a universal vertex to \( G \); that is, \( G' \) is the graph join of \( G \) and \( K_1 \). 
Fix an integer \( k\geq 3 \). 
Clearly, \( G \) is \( k \)-acyclic colourable if and only if \( G' \) is \( (k+1) \)-acyclic colourable. 
Moreover, \( G \) admits two \( k \)-acyclic colourings \( f_1 \) and \( f_2 \) non-equivalent up to colour swaps (i.e., \( (f_1,f_2)\notin \mathcal{R}_{\text{swap}}(G,k) \)) if and only if \( G' \) admits two \( (k+1) \)-acyclic colourings \( f_1' \) and \( f_2' \) non-equivalent up to colour swaps (i.e., \( (f_1',f_2')\notin \mathcal{R}_{\text{swap}}(G',k+1) \)). 
Thus, \( G \) admits a unique \( k \)-acyclic colouring up to colour swaps if and only if \( G' \) admits a unique \( (k+1) \)-acyclic colouring up to colour swaps. 
Hence, for \( k\geq 3 \), the transformation from \( (G,k) \) to \( (G',k+1) \) establishes a reduction from \textsc{Another \( k \)-Acyclic Colouring}\,[\( \mathcal{R}_{\text{swap}} \)] to \textsc{Another \( (k+1) \)-Acyclic Colouring}\,[\( \mathcal{R}_{\text{swap}} \)]. 
Thus, we have the following theorem by Theorem~\ref{thm:3-acyclic R_swap max degree 24} and induction. 
\begin{theorem}\label{thm:k-acyclic R_swap}
For \( k\geq 3 \), \textsc{Another \( k \)-Acyclic Colouring}\,\( [\mathcal{R}_{\text{swap}}] \) is NP-complete and \textsc{Unique \( k \)-Acyclic Colouring}\,\( [\mathcal{R}_{\text{swap}}] \) is coNP-hard. 
\qed
\end{theorem}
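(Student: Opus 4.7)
My plan is to argue exactly as sketched in the paragraph immediately preceding the theorem, via induction on $k$, using Theorem~\ref{thm:3-acyclic R_swap max degree 24} as the base case ($k=3$). For the inductive step from $k$ to $k+1$, given an instance $(G,f)$ of \textsc{Another $k$-Acyclic Colouring}\,$[\mathcal{R}_{\text{swap}}]$, I would form $(G',f')$, where $G'$ is obtained from $G$ by adding a new universal vertex $z$, and $f'$ extends $f$ by setting $f'(z)=k$. This transformation is clearly polynomial-time.

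The heart of the reduction is a correspondence between $(k+1)$-acyclic colourings of $G'$ and $k$-acyclic colourings of $G$. First I would observe that since $z$ is adjacent to every vertex of $G$, in any $(k+1)$-acyclic colouring $g'$ of $G'$ the colour $g'(z)$ is unused on $V(G)$; hence $g'|_{V(G)}$ uses at most $k$ colours and, being a proper colouring of $G$, every bicoloured cycle would also be a bicoloured cycle of $g'$, so $g'|_{V(G)}$ is a $k$-acyclic colouring of $G$. Conversely, any $k$-acyclic colouring $g$ of $G$ (say using colours from $\{0,1,\dots,k-1\}$) extends to a $(k+1)$-acyclic colouring $\hat g$ of $G'$ by setting $\hat g(z)=k$: no bicoloured cycle can pass through $z$ since its colour is unique, and cycles within $G$ remain non-bicoloured.

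Next I would handle the equivalence under colour swaps. In every $(k+1)$-acyclic colouring of $G'$, the colour of $z$ can be swapped with colour $k$ to produce an equivalent colouring in which $\hat g(z)=k$; this canonicalisation shows that equivalence classes of $(k+1)$-acyclic colourings of $G'$ under $\mathcal{R}_{\text{swap}}(G',k+1)$ are in bijection with equivalence classes of $k$-acyclic colourings of $G$ under $\mathcal{R}_{\text{swap}}(G,k)$, because after fixing $\hat g(z)=k$ the remaining freedom is exactly a permutation of $\{0,1,\dots,k-1\}$ acting on $V(G)$. Applied to the designated certificate $f$, this bijection sends $(G,f)$ to a yes-instance of \textsc{Another $(k+1)$-Acyclic Colouring}\,$[\mathcal{R}_{\text{swap}}]$ iff $(G,f)$ is a yes-instance at level $k$. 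Since membership in NP is obvious (a second colouring is a polynomial-size certificate, and inequivalence up to colour swaps can be checked by trying all $(k{+}1)!$ permutations), NP-completeness of \textsc{Another $k$-Acyclic Colouring}\,$[\mathcal{R}_{\text{swap}}]$ follows for all $k\geq 3$ by induction, and coNP-hardness of \textsc{Unique $k$-Acyclic Colouring}\,$[\mathcal{R}_{\text{swap}}]$ is then immediate since the unique-solution problem is the complement of the another-solution problem once we are given a single witness colouring (which can always be produced in polynomial time in the reduction).

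The only mildly subtle point—what I would expect to be the main thing to check carefully rather than a genuine obstacle—is that the canonicalisation $\hat g(z)=k$ genuinely partitions the colourings of $G'$ into equivalence classes of the same sizes as on $G$. This hinges on the fact that the stabiliser in $S_{k+1}$ of the colour $k$ is exactly $S_k$ (viewed as permutations of $\{0,1,\dots,k-1\}$), which is routine. Everything else is bookkeeping on top of Theorem~\ref{thm:3-acyclic R_swap max degree 24}.
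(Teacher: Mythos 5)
Your proposal is correct and follows essentially the same route as the paper: the base case is Theorem~\ref{thm:3-acyclic R_swap max degree 24}, and the inductive step adds a universal vertex (the graph join with \( K_1 \)) to pass from \( k \) to \( k+1 \), observing that the new vertex's colour is forced to be unique so that equivalence classes of colourings up to colour swaps are preserved. The paper states these facts more tersely, while you spell out the canonicalisation of the universal vertex's colour; the substance is identical.
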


Finally, we prove that \textsc{Another \( k \)-Acyclic Colouring}\,[\( \mathcal{R}_{\text{swap+auto}} \)] is NP-complete for all \( k\geq 3 \). 
We prove this for \( k=3 \) first by Construction~\ref{make:3-acyclic R_swap_n_auto} below, which establishes a reduction from \textsc{Another 3-Colouring}\,[\( \mathcal{R}_{\text{swap}} \)]\,(\( \Delta=8 \)) to the problem \textsc{Another 3-Acyclic Colouring}\,[\( \mathcal{R}_{\text{swap+auto}} \)]\,(\( \Delta=4 \)). 
Construction~\ref{make:3-acyclic R_swap_n_auto} is a slight modification of Construction~\ref{make:acyclic}.

\begin{construct}\label{make:3-acyclic R_swap_n_auto}
\emph{Input:} A graph \( G \) of maximum degree~8.\\
\emph{Output:} A bipartite graph \( G^* \) of maximum degree~4.\\
\emph{Guarantee:} \( G \) has a unique 3-colouring up to colour swaps if and only if \( G^* \) has a unique 3-acyclic colouring up to colour swaps and automorphisms.\\
\emph{Steps:}\\
Replace each vertex \( v \) of \( G \) by a chain gadget with \( 3\deg_G(v)+\lambda(v) \) terminals, where \( \lambda\colon V(G)\to \mathbb{N} \) is defined in such a way that no two chain gadgets have the same number of terminals (one way to ensure this is to choose an ordering \( v_1,v_2,\dots,v_n \) of the vertex set of \( G \) such that \( \deg_G(v_1)\leq \deg_G(v_2)\leq \dots \leq \deg_G(v_n) \), and define \( \lambda(v_i)=i \) for \( 1\leq i\leq n \)). 
For each \( v\in V(G) \) and each neighbour \( u \) of \( v \), the chain gadget for \( v \) (let us call it \( \text{chain}(v) \)) has three terminals reserved for \( u \), which we shall call as \( v_{u1} \), \( v_{u2} \) and \( v_{u3} \). 
For each edge \( e=uv \) of \( G \) and each \( j\in\{1,2,3\} \), introduce a new vertex \( e_j \) in \( G^* \) and join \( e_j \) to \( u_{vj} \) as well as \( v_{uj} \). 
An example of the construction is shown in Figure~\ref{fig:eg 3-acyclic R_swap_n_auto}. 
\end{construct}

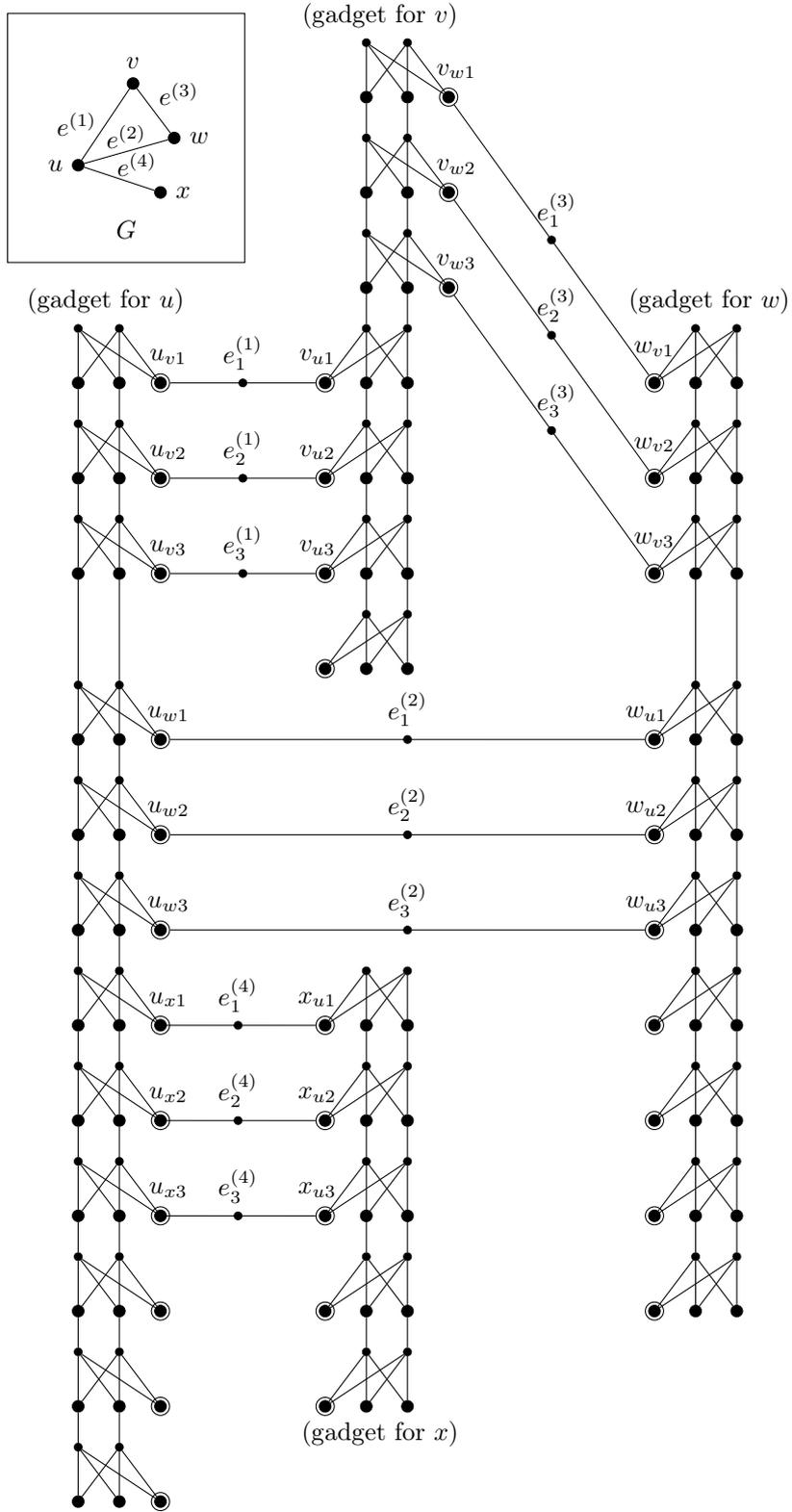
\begin{figure}[hbtp]
\centering 
\begin{tikzpicture}[scale=0.75]
\tikzset{
biggerdot/.style={dot,minimum size = 4.5pt}
}

\path (0,3) node (v1)[biggerdot][label=left:\( u \)]{};
\path (v1) --+(1,1.5) node (v2)[biggerdot][label=\( v \)]{};
\path (v1) --+(1.75,0.5) node (v3)[biggerdot][label=right:\( w \)]{}--(v2);
\path (v1) --+(1.5,-0.5) node (v4)[biggerdot][label=right:\( x \)]{};
\node [fit={(v1) (v2) (v3) (v4)},draw,inner sep=25pt][label={[yshift=20pt]-90:\( G \)}] {};

\draw (v1) --node[left]{\( e^{(1)} \)} (v2);
\draw (v1) --node[above=-2pt]{\( e^{(2)} \)} (v3);
\draw (v3) --node[above right=-2pt]{\( e^{(3)} \)} (v2);
\draw (v1) --node[pos=0.75,above]{\( e^{(4)} \)} (v4);

\coordinate (level1);
\path (level1)
++(0,-1) coordinate (level2)
++(0,-0.75) coordinate (level3)
++(0,-1) coordinate (level4)
++(0,-0.75) coordinate (level5)
++(0,-1) coordinate (level6)
++(0,-2.05) coordinate (level7)
++(0,-1) coordinate (level8)
++(0,-0.75) coordinate (level9)
++(0,-1) coordinate (level10)
++(0,-0.75) coordinate (level11)
++(0,-1) coordinate (level12)
++(0,-0.75) coordinate (level13) coordinate (ogLevel13)
++(0,-1) coordinate (level14)
++(0,-0.75) coordinate (level15)
++(0,-1) coordinate (level16)
++(0,-0.75) coordinate (level17)
++(0,-1) coordinate (level18)
++(0,-0.75) coordinate (level19)
++(0,-1) coordinate (level20)
++(0,-0.75) coordinate (level21)
++(0,-1) coordinate (level22)
++(0,-0.75) coordinate (level23)
++(0,-1) coordinate (level24)
;

\path (level1) node(11)[dot]{}++(0.75,0) node(12)[dot]{};
\path (level2) node(21)[biggerdot]{}++(0.75,0) node(22)[biggerdot]{}++(0.75,0) node(2k)[biggerdot]{} node(v11)[terminal][label={[xshift=3pt]\( u_{v1} \)}]{};
\path (level3) node(31)[dot]{}++(0.75,0) node(32)[dot]{};
\path (level4) node(41)[biggerdot]{}++(0.75,0) node(42)[biggerdot]{}++(0.75,0) node(4k)[biggerdot]{} node(v12)[terminal][label={[xshift=3pt]\( u_{v2} \)}]{};
\path (level5) node(51)[dot]{}++(0.75,0) node(52)[dot]{};
\path (level6) node(61)[biggerdot]{}++(0.75,0) node(62)[biggerdot]{}++(0.75,0) node(6k)[biggerdot]{} node(v13)[terminal][label={[xshift=3pt]\( u_{v3} \)}]{} coordinate(ogv13);
\path (level7) node(71)[dot]{}++(0.75,0) node(72)[dot]{};
\path (level8) node(81)[biggerdot]{}++(0.75,0) node(82)[biggerdot]{}++(0.75,0) node(8k)[biggerdot]{} node(v14)[terminal][label={[xshift=3pt]\( u_{w1} \)}]{} coordinate(ogv14);
\path (level9) node(91)[dot]{}++(0.75,0) node(92)[dot]{};
\path (level10) node(101)[biggerdot]{}++(0.75,0) node(102)[biggerdot]{}++(0.75,0) node(10k)[biggerdot]{} node(v15)[terminal][label={[xshift=3pt]\( u_{w2} \)}]{} coordinate(ogv15);
\path (level11) node(111)[dot]{}++(0.75,0) node(112)[dot]{};
\path (level12) node(121)[biggerdot]{}++(0.75,0) node(122)[biggerdot]{}++(0.75,0) node(12k)[biggerdot]{} node(v16)[terminal][label={[xshift=3pt]\( u_{w3} \)}]{} coordinate(ogv16);
\path (level13) node(131)[dot]{}++(0.75,0) node(132)[dot]{};
\path (level14) node(141)[biggerdot]{}++(0.75,0) node(142)[biggerdot]{}++(0.75,0) node(14k)[biggerdot]{} node(v17)[terminal][label={[xshift=3pt]\( u_{x1} \)}]{} coordinate(ogv17);
\path (level15) node(151)[dot]{}++(0.75,0) node(152)[dot]{};
\path (level16) node(161)[biggerdot]{}++(0.75,0) node(162)[biggerdot]{}++(0.75,0) node(16k)[biggerdot]{} node(v18)[terminal][label={[xshift=3pt]\( u_{x2} \)}]{} coordinate(ogv18);
\path (level17) node(171)[dot]{}++(0.75,0) node(172)[dot]{};
\path (level18) node(181)[biggerdot]{}++(0.75,0) node(182)[biggerdot]{}++(0.75,0) node(18k)[biggerdot]{} node(v19)[terminal][label={[xshift=3pt]\( u_{x3} \)}]{} coordinate(ogv19);

\path (level19) node(191)[dot]{}++(0.75,0) node(192)[dot]{};
\path (level20) node(201)[biggerdot]{}++(0.75,0) node(202)[biggerdot]{}++(0.75,0) node(20k)[biggerdot]{} node(v110)[terminal]{} coordinate(ogv20);
\path (level21) node(211)[dot]{}++(0.75,0) node(212)[dot]{};
\path (level22) node(221)[biggerdot]{}++(0.75,0) node(222)[biggerdot]{}++(0.75,0) node(22k)[biggerdot]{} node(v110)[terminal]{} coordinate(ogv22);
\path (level23) node(231)[dot]{}++(0.75,0) node(232)[dot]{};
\path (level24) node(241)[biggerdot]{}++(0.75,0) node(242)[biggerdot]{}++(0.75,0) node(24k)[biggerdot]{} node(v110)[terminal]{} coordinate(ogv24);

\draw
(11) -- (21)
(11) -- (22)
(11) -- (2k);
\draw
(12) -- (21)
(12) -- (22)
(12) -- (2k);
\draw (21)--(31)  (22)--(32);
\draw
(31) -- (41)
(31) -- (42)
(31) -- (4k);
\draw
(32) -- (41)
(32) -- (42)
(32) -- (4k);
\draw (41)--(51)  (42)--(52);
\draw
(51) -- (61)
(51) -- (62)
(51) -- (6k);
\draw
(52) -- (61)
(52) -- (62)
(52) -- (6k);
\draw (61)--(71)  (62)--(72);
\draw
(71) -- (81)
(71) -- (82)
(71) -- (8k);
\draw
(72) -- (81)
(72) -- (82)
(72) -- (8k);
\draw (81)--(91)  (82)--(92);
\draw
(91) -- (101)
(91) -- (102)
(91) -- (10k);
\draw
(92) -- (101)
(92) -- (102)
(92) -- (10k);
\draw (101)--(111)  (102)--(112);
\draw
(111) -- (121)
(111) -- (122)
(111) -- (12k);
\draw
(112) -- (121)
(112) -- (122)
(112) -- (12k);
\draw (121)--(131)  (122)--(132);
\draw
(131) -- (141)
(131) -- (142)
(131) -- (14k);
\draw
(132) -- (141)
(132) -- (142)
(132) -- (14k);
\draw (141)--(151)  (142)--(152);
\draw
(151) -- (161)
(151) -- (162)
(151) -- (16k);
\draw
(152) -- (161)
(152) -- (162)
(152) -- (16k);
\draw (161)--(171)  (162)--(172);
\draw 
(171) -- (181)
(171) -- (182)
(171) -- (18k);
\draw 
(172) -- (181)
(172) -- (182)
(172) -- (18k);
\draw (181)--(191)  (182)--(192);
\draw
(191) -- (201)
(191) -- (202)
(191) -- (20k);
\draw
(192) -- (201)
(192) -- (202)
(192) -- (20k);
\draw (201)--(211)  (202)--(212);
\draw
(211) -- (221)
(211) -- (222)
(211) -- (22k);
\draw
(212) -- (221)
(212) -- (222)
(212) -- (22k);
\draw (221)--(231)  (222)--(232);
\draw
(231) -- (241)
(231) -- (242)
(231) -- (24k);
\draw
(232) -- (241)
(232) -- (242)
(232) -- (24k);

\path (level1) +(0.5,0.5) node{(gadget for \( u \))};

\path (level1) ++(6,5.25) coordinate (level1);
\path (level1)
++(0,-1) coordinate (level2) 
++(0,-0.75) coordinate (level3) 
++(0,-1) coordinate (level4) 
++(0,-0.75) coordinate (level5) 
++(0,-1) coordinate (level6) 
++(0,-0.75) coordinate (level7) 
++(0,-1) coordinate (level8) 
++(0,-0.75) coordinate (level9) 
++(0,-1) coordinate (level10) 
++(0,-0.75) coordinate (level11) 
++(0,-1) coordinate (level12)
++(0,-0.75) coordinate (level13)
++(0,-1) coordinate (level14);

\path (level1) node(11)[dot]{}++(-0.75,0) node(12)[dot]{};
\path (level2) node(21)[biggerdot]{}++(-0.75,0) node(22)[biggerdot]{}++(1.5,0) node(2k)[biggerdot]{} node(v21)[terminal][label={[xshift=3pt]\( v_{w1} \)}]{};
\path (level3) node(31)[dot]{}++(-0.75,0) node(32)[dot]{};
\path (level4) node(41)[biggerdot]{}++(-0.75,0) node(42)[biggerdot]{}++(1.5,0) node(4k)[biggerdot]{} node(v22)[terminal][label={[xshift=3pt]\( v_{w2} \)}]{};
\path (level5) node(51)[dot]{}++(-0.75,0) node(52)[dot]{};
\path (level6) node(61)[biggerdot]{}++(-0.75,0) node(62)[biggerdot]{}++(1.5,0) node(6k)[biggerdot]{} node(v23)[terminal][label={[xshift=3pt]\( v_{w3} \)}]{};
\path (level7) node(71)[dot]{}++(-0.75,0) node(72)[dot]{};
\path (level8) node(81)[biggerdot]{}++(-0.75,0) node(82)[biggerdot]{}++(-0.75,0) node(8k)[biggerdot]{} node(v24)[terminal][label={[xshift=-3pt]\( v_{u1} \)}]{};
\path (level9) node(91)[dot]{}++(-0.75,0) node(92)[dot]{};
\path (level10) node(101)[biggerdot]{}++(-0.75,0) node(102)[biggerdot]{}++(-0.75,0) node(10k)[biggerdot]{} node(v25)[terminal][label={[xshift=-3pt]\( v_{u2} \)}]{};
\path (level11) node(111)[dot]{}++(-0.75,0) node(112)[dot]{};
\path (level12) node(121)[biggerdot]{}++(-0.75,0) node(122)[biggerdot]{}++(-0.75,0) node(12k)[biggerdot]{} node(v26)[terminal][label={[xshift=-3pt]\( v_{u3} \)}]{};

\path (level13) node(131)[dot]{}++(-0.75,0) node(132)[dot]{};
\path (level14) node(141)[biggerdot]{}++(-0.75,0) node(142)[biggerdot]{}++(-0.75,0) node(14k)[biggerdot]{} node(v27)[terminal]{}; 

\draw
(11) -- (21)
(11) -- (22)
(11) -- (2k);
\draw
(12) -- (21)
(12) -- (22)
(12) -- (2k);
\draw (21)--(31)  (22)--(32);
\draw
(31) -- (41)
(31) -- (42)
(31) -- (4k);
\draw
(32) -- (41)
(32) -- (42)
(32) -- (4k);
\draw (41)--(51)  (42)--(52);
\draw
(51) -- (61)
(51) -- (62)
(51) -- (6k);
\draw
(52) -- (61)
(52) -- (62)
(52) -- (6k);
\draw (61)--(71)  (62)--(72);
\draw
(71) -- (81)
(71) -- (82)
(71) -- (8k);
\draw
(72) -- (81)
(72) -- (82)
(72) -- (8k);
\draw (81)--(91)  (82)--(92);
\draw
(91) -- (101)
(91) -- (102)
(91) -- (10k);
\draw
(92) -- (101)
(92) -- (102)
(92) -- (10k);
\draw (101)--(111)  (102)--(112);
\draw
(111) -- (121)
(111) -- (122)
(111) -- (12k);
\draw
(112) -- (121)
(112) -- (122)
(112) -- (12k);
\draw (121)--(131)  (122)--(132);
\draw
(131) -- (141)
(131) -- (142)
(131) -- (14k);
\draw
(132) -- (141)
(132) -- (142)
(132) -- (14k);

\path (level1) +(-0.5,0.5) node{(gadget for \( v \))};

\path (level1) ++(6,-5.25) coordinate (level1);
\path (level1)
++(0,-1) coordinate (level2) 
++(0,-0.75) coordinate (level3) 
++(0,-1) coordinate (level4) 
++(0,-0.75) coordinate (level5) 
++(0,-1) coordinate (level6) 
++(0,-2.05) coordinate (level7) 
++(0,-1) coordinate (level8) 
++(0,-0.75) coordinate (level9) 
++(0,-1) coordinate (level10) 
++(0,-0.75) coordinate (level11) 
++(0,-1) coordinate (level12)
++(0,-0.75) coordinate (level13)
++(0,-1) coordinate (level14)
++(0,-0.75) coordinate (level15)
++(0,-1) coordinate (level16)
++(0,-0.75) coordinate (level17)
++(0,-1) coordinate (level18)
++(0,-0.75) coordinate (level19)
++(0,-1) coordinate (level20);

\path (level1) node(11)[dot]{}++(-0.75,0) node(12)[dot]{};
\path (level2) node(21)[biggerdot]{}++(-0.75,0) node(22)[biggerdot]{}++(-0.75,0) node(2k)[biggerdot]{} node(v31)[terminal][label={[yshift=3pt]\( w_{v1} \)}]{};
\path (level3) node(31)[dot]{}++(-0.75,0) node(32)[dot]{};
\path (level4) node(41)[biggerdot]{}++(-0.75,0) node(42)[biggerdot]{}++(-0.75,0) node(4k)[biggerdot]{} node(v32)[terminal][label={[yshift=3pt]\( w_{v2} \)}]{};
\path (level5) node(51)[dot]{}++(-0.75,0) node(52)[dot]{};
\path (level6) node(61)[biggerdot]{}++(-0.75,0) node(62)[biggerdot]{}++(-0.75,0) node(6k)[biggerdot]{} node(v33)[terminal][label={[yshift=3pt]\( w_{v3} \)}]{};
\path (level7) node(71)[dot]{}++(-0.75,0) node(72)[dot]{};
\path (level8) node(81)[biggerdot]{}++(-0.75,0) node(82)[biggerdot]{}++(-0.75,0) node(8k)[biggerdot]{} node(v34)[terminal][label={[xshift=-3pt]\( w_{u1} \)}]{};
\path (level9) node(91)[dot]{}++(-0.75,0) node(92)[dot]{};
\path (level10) node(101)[biggerdot]{}++(-0.75,0) node(102)[biggerdot]{}++(-0.75,0) node(10k)[biggerdot]{} node(v35)[terminal][label={[xshift=-3pt]\( w_{u2} \)}]{};
\path (level11) node(111)[dot]{}++(-0.75,0) node(112)[dot]{};
\path (level12) node(121)[biggerdot]{}++(-0.75,0) node(122)[biggerdot]{}++(-0.75,0) node(12k)[biggerdot]{} node(v36)[terminal][label={[xshift=-3pt]\( w_{u3} \)}]{};

\path (level13) node(131)[dot]{}++(-0.75,0) node(132)[dot]{};
\path (level14) node(141)[biggerdot]{}++(-0.75,0) node(142)[biggerdot]{}++(-0.75,0) node(14k)[biggerdot]{} node(v17)[terminal]{};
\path (level15) node(151)[dot]{}++(-0.75,0) node(152)[dot]{};
\path (level16) node(161)[biggerdot]{}++(-0.75,0) node(162)[biggerdot]{}++(-0.75,0) node(16k)[biggerdot]{} node(v18)[terminal]{};
\path (level17) node(171)[dot]{}++(-0.75,0) node(172)[dot]{};
\path (level18) node(181)[biggerdot]{}++(-0.75,0) node(182)[biggerdot]{}++(-0.75,0) node(18k)[biggerdot]{} node(v19)[terminal]{};
\path (level19) node(191)[dot]{}++(-0.75,0) node(192)[dot]{};
\path (level20) node(201)[biggerdot]{}++(-0.75,0) node(202)[biggerdot]{}++(-0.75,0) node(20k)[biggerdot]{} node(v110)[terminal]{};

\draw
(11) -- (21)
(11) -- (22)
(11) -- (2k);
\draw
(12) -- (21)
(12) -- (22)
(12) -- (2k);
\draw (21)--(31)  (22)--(32);
\draw
(31) -- (41)
(31) -- (42)
(31) -- (4k);
\draw
(32) -- (41)
(32) -- (42)
(32) -- (4k);
\draw (41)--(51)  (42)--(52);
\draw
(51) -- (61)
(51) -- (62)
(51) -- (6k);
\draw
(52) -- (61)
(52) -- (62)
(52) -- (6k);
\draw (61)--(71)  (62)--(72);
\draw
(71) -- (81)
(71) -- (82)
(71) -- (8k);
\draw
(72) -- (81)
(72) -- (82)
(72) -- (8k);
\draw (81)--(91)  (82)--(92);
\draw
(91) -- (101)
(91) -- (102)
(91) -- (10k);
\draw
(92) -- (101)
(92) -- (102)
(92) -- (10k);
\draw (101)--(111)  (102)--(112);
\draw
(111) -- (121)
(111) -- (122)
(111) -- (12k);
\draw
(112) -- (121)
(112) -- (122)
(112) -- (12k);
\draw (121)--(131)  (122)--(132);
\draw
(131) -- (141)
(131) -- (142)
(131) -- (14k);
\draw
(132) -- (141)
(132) -- (142)
(132) -- (14k);
\draw (141)--(151)  (142)--(152);
\draw
(151) -- (161)
(151) -- (162)
(151) -- (16k);
\draw
(152) -- (161)
(152) -- (162)
(152) -- (16k);
\draw (161)--(171)  (162)--(172);
\draw
(171) -- (181)
(171) -- (182)
(171) -- (18k);
\draw
(172) -- (181)
(172) -- (182)
(172) -- (18k);
\draw (181)--(191)  (182)--(192);
\draw
(191) -- (201)
(191) -- (202)
(191) -- (20k);
\draw
(192) -- (201)
(192) -- (202)
(192) -- (20k); 

\path (level1) +(-0.5,0.5) node{(gadget for \( w \))};

\path (ogLevel13) ++(6,0) coordinate (level1);
\path (level1)
++(0,-1) coordinate (level2) 
++(0,-0.75) coordinate (level3) 
++(0,-1) coordinate (level4) 
++(0,-0.75) coordinate (level5) 
++(0,-1) coordinate (level6)
++(0,-0.75) coordinate (level7)
++(0,-1) coordinate (level8)
++(0,-0.75) coordinate (level9)
++(0,-1) coordinate (level10)
;

\path (level1) node(11)[dot]{}++(-0.75,0) node(12)[dot]{};
\path (level2) node(21)[biggerdot]{}++(-0.75,0) node(22)[biggerdot]{}++(-0.75,0) node(2k)[biggerdot]{} node(v41)[terminal][label={[xshift=-3pt]\( x_{u1} \)}]{};
\path (level3) node(31)[dot]{}++(-0.75,0) node(32)[dot]{};
\path (level4) node(41)[biggerdot]{}++(-0.75,0) node(42)[biggerdot]{}++(-0.75,0) node(4k)[biggerdot]{} node(v42)[terminal][label={[xshift=-3pt]\( x_{u2} \)}]{};
\path (level5) node(51)[dot]{}++(-0.75,0) node(52)[dot]{};
\path (level6) node(61)[biggerdot]{}++(-0.75,0) node(62)[biggerdot]{}++(-0.75,0) node(6k)[biggerdot]{} node(v43)[terminal][label={[xshift=-3pt]\( x_{u3} \)}]{};

\path (level7) node(71)[dot]{}++(-0.75,0) node(72)[dot]{};
\path (level8) node(81)[biggerdot]{}++(-0.75,0) node(82)[biggerdot]{}++(-0.75,0) node(8k)[biggerdot]{} node(v44)[terminal]{};
\path (level9) node(91)[dot]{}++(-0.75,0) node(92)[dot]{};
\path (level10) node(101)[biggerdot]{}++(-0.75,0) node(102)[biggerdot]{}++(-0.75,0) node(10k)[biggerdot]{} node[terminal]{};

\draw
(11) -- (21)
(11) -- (22)
(11) -- (2k);
\draw
(12) -- (21)
(12) -- (22)
(12) -- (2k);
\draw (21)--(31)  (22)--(32);
\draw
(31) -- (41)
(31) -- (42)
(31) -- (4k);
\draw
(32) -- (41)
(32) -- (42)
(32) -- (4k);
\draw (41)--(51)  (42)--(52);
\draw
(51) -- (61)
(51) -- (62)
(51) -- (6k);
\draw
(52) -- (61)
(52) -- (62)
(52) -- (6k);
\draw (61)--(71)  (62)--(72);
\draw
(71) -- (81)
(71) -- (82)
(71) -- (8k);
\draw
(72) -- (81)
(72) -- (82)
(72) -- (8k);
\draw (81)--(91)  (82)--(92);
\draw
(91) -- (101)
(91) -- (102)
(91) -- (10k);
\draw
(92) -- (101)
(92) -- (102)
(92) -- (10k);

\path (level10) +(-0.5,-0.5) node{(gadget for \( x \))};

\draw (v11) --node[dot][label=\( e^{(1)}_1 \)]{} (v24);
\draw (v12) --node[dot][label=\( e^{(1)}_2 \)]{} (v25);
\draw (v13) --node[dot][label=\( e^{(1)}_3 \)]{} (v26);
\draw (v14) --node[dot][label=\( e^{(2)}_1 \)]{} (v34);
\draw (v15) --node[dot][label=\( e^{(2)}_2 \)]{} (v35);
\draw (v16) --node[dot][label=\( e^{(2)}_3 \)]{} (v36);
\draw (ogv17) --node[dot][label=\( e^{(4)}_1 \)]{} (v41);
\draw (ogv18) --node[dot][label=\( e^{(4)}_2 \)]{} (v42);
\draw (ogv19) --node[dot][label=\( e^{(4)}_3 \)]{} (v43);
\draw (v21) --node[dot][label={[xshift=2pt]\( e^{(3)}_1 \)}]{} (v31);
\draw (v22) --node[dot][label={[xshift=2pt]\( e^{(3)}_2 \)}]{} (v32);
\draw (v23) --node[dot][label={[xshift=2pt]\( e^{(3)}_3 \)}]{} (v33);
\end{tikzpicture}
\caption[Example of Construction~\ref{make:3-acyclic R_swap_n_auto} with \( k=3 \).]{Example of Construction~\ref{make:3-acyclic R_swap_n_auto} with \( k=3 \) (here, \( \lambda(u)=3 \), \( \lambda(v)=1 \), \( \lambda(w)=4 \) and \( \lambda(x)=2 \)). Graph \( G^* \) is displayed large, and graph \( G \) is shown inset (for convenience, a graph of maximum degree 3 rather than 8 is used as \( G \)).}
\label{fig:eg 3-acyclic R_swap_n_auto}
\end{figure}

\begin{proof}[Proof of Guarantee]
First, we construct a surjective mapping \( \phi \) from the set of 3-acyclic colourings of \( G^* \) to the set of 3-colourings of \( G \). 
Then, we show that \( \phi \) gives a bijection from the set of 3-acyclic colourings of \( G^* \) up to automorphisms to the set of 3-colourings of \( G \). 
This proves that \( \phi \) is a bijection from the set of 3-acyclic colourings of \( G^* \) up to colour swaps and automorphisms to the set of 3-colourings of \( G \) up to colour swaps (that is, if \( \mathcal{R}^* \) is the equivalence relation \( \mathcal{R}_{\text{swap+auto}}(G^*,3) \) restricted to the set of 3-acyclic colourings of \( G^* \), then \( \phi \) is a bijection from the set of equivalence classes of \( \mathcal{R}^* \) to the set of equivalence classes of \( \mathcal{R}_{\text{swap}}(G,3) \)). 
Before constructing \( \phi \), we discuss the structure of automorphisms of \( G^* \).

Let \( \text{Aut}(G^*,v) \) denote the set of automorphisms \( \psi \) of \( G^* \) such that \( \psi \) fixes all vertices not in \( \text{chain}(v) \); i.e., \( \psi(x)=x \) for all \( x\in V(G^*)\setminus V(\text{chain}(v)) \). 
Since no two chain gadgets have the same number of terminals, each automorphism of \( G^* \) is in a sense composed of automorphisms of the chain gadgets. 
This is formally expressed as Claim~\ref{clm:aut G' made of auts of chains} below (see Section~5 in the supplementary material for proof). 
\setcounter{claim}{0}
\begin{claim}\label{clm:aut G' made of auts of chains}
For every automorphism \( \psi \) of \( G^* \), there exists an automorphism \( \psi_v \) for each \( v\in V(G) \) such that \( \psi \) equals the function composition of \( \psi_v \)'s\\
(i.e., if \( V(G)=\{v_1,v_2,\dots,v_n\} \), then for every automorphism \( \psi \) of \( G^* \), there exist\\ \( \psi_1 \in \text{Aut}(G^*,v_1) \), \dots, \( \psi_n \in \text{Aut}(G^*,v_n) \) such that \( \psi=\psi_1\circ \psi_2\circ \dots \circ \psi_n \)). 
\end{claim}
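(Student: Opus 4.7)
The plan is to exploit the rigidity of the chain gadgets inside $G^*$. First I would show that any automorphism $\psi$ of $G^*$ sends each $\text{chain}(v)$ to itself setwise. The chain gadgets can be identified intrinsically, for example by first singling out the connector vertices $e_j^{(uv)}$ (say, as those degree-$2$ vertices whose two neighbours each have degree $k$) and then taking the connected components obtained after their removal. Because $\lambda$ was chosen so that no two chain gadgets have the same number of terminals, they all have pairwise distinct sizes, and thus $\psi$ can only map each chain to itself. An analogous argument shows that $\psi$ permutes the connectors and, for each edge $uv$ of $G$, only within the triple $\{e_1^{(uv)},e_2^{(uv)},e_3^{(uv)}\}$; a corresponding permutation is forced on $\{u_{v1},u_{v2},u_{v3}\}\subseteq V(\text{chain}(u))$ and on $\{v_{u1},v_{u2},v_{u3}\}\subseteq V(\text{chain}(v))$.

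Next I would show that $\psi$ in fact fixes every terminal. Inside $\text{chain}(v)$ the odd-even bipartition has parts of different sizes $(k-1)t$ and $kt$, so $\psi$ respects it. The level structure can be pinned down by propagating degree information outward from level~$1$, whose small dots are the unique small dots of degree $k$ rather than $k+1$; using the alternating $K_{k-1,k}$-and-matching pattern, each subsequent level is forced into its original position. In any internal even level $2i<2t$ the terminal is the unique big dot of smaller degree in $\text{chain}(v)$, and in level $2t$ it is the unique big dot with an external neighbour in $G^*$; either way the terminal is distinguished. Since the three terminals $v_{u1},v_{u2},v_{u3}$ lie in three distinct even levels, each is individually fixed, which in turn forces each connector $e_j^{(uv)}$ to be fixed as well, being the unique common neighbour of the already-fixed $u_{vj}$ and $v_{uj}$.

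Having established that $\psi$ fixes every terminal and every connector, I would define, for each $v\in V(G)$, the map $\psi_v$ by $\psi_v(x)=\psi(x)$ when $x\in V(\text{chain}(v))$ and $\psi_v(x)=x$ otherwise. The map $\psi_v$ is a graph automorphism of $G^*$: edges inside $\text{chain}(v)$ are preserved because $\psi$ is an automorphism that sends $\text{chain}(v)$ to itself; edges disjoint from $\text{chain}(v)$ are trivially preserved; and each edge $v_{uj}\,e_j^{(uv)}$ is preserved because both of its endpoints are fixed. Hence $\psi_v\in\mathrm{Aut}(G^*,v)$. With $V(G)=\{v_1,\dots,v_n\}$, the identity $\psi=\psi_1\circ\psi_2\circ\dots\circ\psi_n$ follows vertex by vertex: on a connector every factor acts as the identity and $\psi$ also fixes it; on a vertex $x\in V(\text{chain}(v_i))$ all factors except $\psi_i$ fix $x$, and $\psi_i(x)=\psi(x)$ lies in $\text{chain}(v_i)$ and is therefore fixed by every other factor.

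The main obstacle will be the level-rigidity argument in step two: verifying rigorously that every automorphism of $\text{chain}(v)$ (viewed as a subgraph of $G^*$) preserves each level setwise. Degrees immediately single out level~$1$ and one end of the chain, but extending this through the middle of the chain requires a careful inductive argument that uses the alternation between complete-bipartite blocks and matching edges to rule out any exotic automorphism that relocates vertices across levels having the same degree sequence.
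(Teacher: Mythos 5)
Your proposal is correct and follows essentially the same route as the paper's (supplementary) proof: rigidity of the chain gadgets — pairwise-distinct terminal counts force each $\text{chain}(v)$ to map to itself, and bipartition/degree/distance arguments force each level to map to itself — followed by the fixing of attached terminals and connectors and the resulting factorisation of $\psi$ into the $\psi_v$'s. One small slip worth noting: since $\lambda(v)\ge 1$, the terminal at Level $2t$ is never attached to a connector, so it is \emph{not} distinguished among the three Level-$2t$ big dots (those three vertices share the same neighbourhood and can be freely permuted by automorphisms); this is harmless, because every attached terminal $v_{uj}$ sits at an internal even level $2i<2t$, which is all your argument actually uses to fix the terminals and hence the connectors.
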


Moreover, we have the following claim (for a proof, see Claim~1.6 in Section~5 of the supplementary material). 
\begin{claim}\label{clm:aut of G* maps level j of chain(v) to level j of chin(v)}
For each vertex \( v \) of \( G \), each automorphism \( \psi \) of \( G^* \) maps each vertex in Level~\( j \) of \( \text{chain}(v) \) to some vertex in Level~\( j \) of \( \text{chain}(v) \), where \( j\in \mathbb{N} \). 
\end{claim}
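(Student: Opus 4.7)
The plan is to reduce the claim to automorphisms that are localised at a single chain gadget, via Claim~\ref{clm:aut G' made of auts of chains}, and then prove level preservation inside that chain gadget by a short induction. By that earlier claim every automorphism of $G^*$ is a composition of elements of $\text{Aut}(G^*,v_i)$, and each element of $\text{Aut}(G^*,v_i)$ acts as the identity on $\text{chain}(v_j)$ for $j\ne i$, so it suffices to show that every $\psi\in \text{Aut}(G^*,v)$ preserves each Level~$j$ of $\text{chain}(v)$ setwise. Fix such a $\psi$; then its restriction to $V(\text{chain}(v))$ is a graph automorphism of $\text{chain}(v)$, and because $\psi$ fixes everything outside $\text{chain}(v)$ the number of external neighbours of any chain-gadget vertex, and hence its chain-gadget degree, is $\psi$-invariant.

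The argument then proceeds by induction on $j$. For $j=1$, note that $\text{chain}(v)$ is connected bipartite with parts formed by the union of odd levels (of total size $(k-1)t$) and the union of even levels (of total size $kt$), where $t=3\deg_G(v)+\lambda(v)$. These sizes differ, so $\psi$ maps odd levels to odd levels; among odd levels, Level~1 is the unique one whose vertices have chain-gadget degree $k$ (every other odd level picks up a matching edge from its preceding even level and therefore has chain-gadget degree $k+1$), so Level~1 is preserved. Assuming Levels $1,\dots,j$ are preserved, the inductive step splits on parity. If $j=2i-1$ is odd, then Level~$2i$ is characterised inside $\text{chain}(v)$ as the set of vertices whose chain-gadget neighbourhood contains \emph{all} of Level~$2i-1$ (witnessed by the $K_{k-1,k}$ between the two levels; a vertex of Level~$2i-2$ has at most one such neighbour via the matching), hence $\psi$ preserves Level~$2i$. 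If $j=2i$ is even with $i<t$, then chain-gadget degree separates the $k-1$ non-terminals of Level~$2i$ (degree $k$) from the unique terminal (degree $k-1$), and Level~$2i+1$ is exactly the set of matching partners of those non-terminals, so $\psi$ preserves Level~$2i+1$ as well.

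The main conceptual point to keep track of is that the distinguishing features used in the induction (adjacency within $\text{chain}(v)$, chain-gadget degree, terminal versus non-terminal) are genuine $\psi$-invariants of $G^*$, rather than merely invariants of the abstract graph $\text{chain}(v)$. The reduction to $\text{Aut}(G^*,v)$ at the start is what makes this work: for a $\psi$ that fixes all vertices outside $\text{chain}(v)$, the bookkeeping of which edges are chain-gadget edges and which are external edges is $\psi$-invariant for free. Once that reduction is made, the alternating $K_{k-1,k}$-and-matching architecture of the chain gadget essentially forces the level-by-level propagation and no additional obstacles arise.
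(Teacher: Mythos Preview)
Your proposal is correct and follows essentially the same route as the paper. The paper defers the full argument to the supplementary material (Claim~1.6, built on Claim~1.5 which asserts that every automorphism of the chain gadget preserves each level); your reduction via Claim~\ref{clm:aut G' made of auts of chains} to automorphisms in $\text{Aut}(G^*,v)$, followed by the observation that the restriction to $V(\text{chain}(v))$ is an automorphism of the induced subgraph $\text{chain}(v)$, and then the level-by-level induction using bipartition sizes and internal degrees, is exactly the argument one expects the supplement to contain.
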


Note that the chain gadget used here is the special case \( k=3 \) of the chain gadget in Lemma~\ref{lem:acyclic colouring chain gadget properties}. 
By Lemma~\ref{lem:acyclic colouring chain gadget properties}, this chain gadget has exactly one 3-acyclic colouring up to colour swaps and automorphisms. 
In particular, the terminals of a chain gadget get the same colour under a 3-acyclic colouring (and we shall call this colour as the \emph{colour of the chain gadget}). 
Note that if \( G \) is given as input, the output graph \( G' \) in Construction~\ref{make:acyclic} with \( k=3 \) is a subgraph of \( G^* \)  (compare Figure~\ref{fig:eg make acyclic v2} with Figure~\ref{fig:eg 3-acyclic R_swap_n_auto}). 
Hence, as in Construction~\ref{make:acyclic}, for each edge \( uv \) of \( G \), the colour of \( \text{chain}(u) \) differs from the colour of \( \text{chain}(v) \) under each 3-acyclic colouring \( f^* \) of \( G^* \). 
Hence, for every 3-acyclic colouring \( f^* \) of \( G^* \), there exists a corresponding 3-colouring \( f \) of \( G \) such that \( f(v) \) equals the colour of \( \text{chain}(v) \) for each \( v\in V(G) \).

Let \( \phi \) be the function that maps each 3-acyclic colouring \( f^* \) of \( G^* \) to this corresponding 3-colouring \( f \) of \( G \). 
The next claim shows that \( \phi \) is onto.
\begin{claim}\label{clm:phi is onto}
Every 3-colouring \( f \) of \( G \) has a preimage under \( \phi \).
\end{claim}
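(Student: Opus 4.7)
The plan is to mirror the ``if'' direction of Guarantee~1 in Construction~\ref{make:acyclic}: given a 3-colouring $f\colon V(G) \to \{0,1,2\}$, I will explicitly build a 3-acyclic colouring $f^*$ of $G^*$ satisfying $\phi(f^*) = f$. The key observation is that $G^*$ differs from the graph produced by Construction~\ref{make:acyclic} with $k=3$ only by additional ``padding'' terminals (those counted by $\lambda$) that have no neighbours outside their own chain gadget, so the colouring scheme from Construction~\ref{make:acyclic} transfers over essentially unchanged.

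First I would colour each $\text{chain}(v)$ by the canonical scheme of Figure~\ref{fig:k-acyclic colouring of chain gadget} with colours $0$ and $f(v)$ swapped. By Lemma~\ref{lem:acyclic colouring chain gadget properties} this restriction is a 3-acyclic colouring of the chain, and every terminal of $\text{chain}(v)$, reserved or padding, receives colour $f(v)$. Next, for each edge $e = uv$ of $G$, since $f$ is proper we have $f(u) \neq f(v)$, so there is a unique colour $c_e \in \{0,1,2\} \setminus \{f(u), f(v)\}$, and I set $f^*(e_1) = f^*(e_2) = f^*(e_3) = c_e$. Properness of $f^*$ is immediate: each $e_j$ has only the two neighbours $u_{vj}$ (coloured $f(u)$) and $v_{uj}$ (coloured $f(v)$), both different from $c_e$, and the chain gadgets are properly coloured by Lemma~\ref{lem:acyclic colouring chain gadget properties}.

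To verify that $f^*$ is 3-acyclic, I case-split on a cycle $C$ in $G^*$. If $C$ lies inside a single chain gadget, it is not bicoloured, since the gadget is 3-acyclic coloured. Otherwise $C$ contains some vertex $e_j$, and since $e_j$'s only two neighbours in $G^*$ are $u_{vj}$ and $v_{uj}$, the cycle $C$ contains the subpath $u_{vj}, e_j, v_{uj}$ coloured with the three distinct colours $f(u), c_e, f(v)$; hence $C$ is tricoloured. Since every terminal of $\text{chain}(v)$ is coloured $f(v)$, the colour of $\text{chain}(v)$ equals $f(v)$ and therefore $\phi(f^*) = f$, proving that $f$ has a preimage.

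No genuinely new obstacle arises relative to Construction~\ref{make:acyclic}; the only point that requires care is verifying that the padding terminals, which sit inside chain gadgets but have no neighbours outside them, do not enable any new cross-gadget cycle nor disturb the chain's own acyclic colouring. Both are immediate, since the padding terminals behave identically to reserved terminals within the chain and have no edges to the $e_j$ vertices, so the case analysis above is exhaustive.
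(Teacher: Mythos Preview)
Your proposal is correct and follows essentially the same approach as the paper: build $f^*$ by colouring each $\text{chain}(v)$ with the canonical scheme so that its terminals receive $f(v)$, colour each $e_j$ with the unique third colour, and argue that any cycle not confined to a chain gadget must traverse some tricoloured subpath $u_{vj},e_j,v_{uj}$. Your remarks on the padding terminals are a slight elaboration not explicit in the paper's version, but the argument is otherwise the same.
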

Let \( f \) be a 3-colouring of \( G \). 
We claim that the colouring \( f^* \) of \( G^* \) defined as follows is a preimage of \( f \) under \( \phi \): for each vertex \( v \) of \( G \), colour the chain gadget for vertex \( v \) by assigning the colour \( f(v) \) on vertices of Level \( 2j \) and the remaining two colours on vertices of Level \( 2j-1 \) for each \( j \); whenever \( e=uv \) is an edge in \( G \), colour the vertices \( e_1,e_2 \) and \( e_3 \) by the only colour different from both \( f(u) \) and \( f(v) \). 
Since the paths of the form \( u_{v\,j},e_j,v_{u\,j} \) are tricoloured, any bicoloured cycle in \( G^* \) must be entirely within a chain gadget. 
But, an acyclic colouring scheme is used on each chain gadget. 
Therefore, there is no cycle in \( G^* \) bicoloured by \( f^* \). 
This proves Claim~\ref{clm:phi is onto}. 

\begin{claim}\label{clm:same up to swaps and auto imply image is same}
If two 3-acyclic colourings \( f_1^* \) and \( f_2^* \) of \( G^* \) are the same up to automorphisms, then \( \phi(f_1^*)=\phi(f_2^*) \).
\end{claim}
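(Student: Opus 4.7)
The plan is to unpack the hypothesis and chase the definition of $\phi$ through the structure already established by Claims~\ref{clm:aut G' made of auts of chains} and~\ref{clm:aut of G* maps level j of chain(v) to level j of chin(v)}. Recall that $\phi(f^*)(v)$ is the common colour of the terminals of $\text{chain}(v)$ under $f^*$. Suppose $f_1^*$ and $f_2^*$ are the same up to automorphisms, so there is an automorphism $\psi$ of $G^*$ with $f_2^*(\psi(x)) = f_1^*(x)$ for every $x \in V(G^*)$. Fix an arbitrary vertex $v \in V(G)$ and a terminal $t$ of $\text{chain}(v)$; the goal will be to show $\phi(f_1^*)(v)=\phi(f_2^*)(v)$ by evaluating both sides at $t$ and $\psi(t)$ respectively.

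The first step is to locate $\psi(t)$. By Claim~\ref{clm:aut of G* maps level j of chain(v) to level j of chin(v)}, $\psi$ maps every vertex in Level~$j$ of $\text{chain}(v)$ to a vertex in Level~$j$ of $\text{chain}(v)$; in particular $\psi(V(\text{chain}(v))) = V(\text{chain}(v))$ and $\psi(t)$ lies at the same even level of $\text{chain}(v)$ as $t$. The second step is to argue that $\psi(t)$ is in fact a terminal of $\text{chain}(v)$. The key structural observation is that within each even level of $\text{chain}(v)$, the terminal is distinguished as the unique vertex having a neighbour outside $V(\text{chain}(v))$ (namely some $e_j^{(i)}$ coming from an incident edge of $v$ in $G$), whereas the remaining $k-1$ vertices of that level are matched only to the level below, which lies inside $\text{chain}(v)$. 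Since $\psi$ permutes $V(\text{chain}(v))$ and is an automorphism of $G^*$, it preserves the property of having a neighbour outside $V(\text{chain}(v))$, so $\psi(t)$ must also be a terminal.

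With $\psi(t)$ identified as a terminal of $\text{chain}(v)$, the computation is immediate: $\phi(f_1^*)(v) = f_1^*(t) = f_2^*(\psi(t)) = \phi(f_2^*)(v)$, where the last equality uses that under the 3-acyclic colouring $f_2^*$ all terminals of $\text{chain}(v)$ share the single ``colour of $\text{chain}(v)$'' (a consequence of Lemma~\ref{lem:acyclic colouring chain gadget properties}(i), already invoked when defining $\phi$). Since $v$ was arbitrary, $\phi(f_1^*) = \phi(f_2^*)$.

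There is no genuinely hard step in this proposal; all the substantive work lies in the earlier two claims. The only point that requires care is the terminal-vs.-non-terminal distinction, and I expect this to be the place where the proof must be written most carefully: one has to verify that terminals of $\text{chain}(v)$ really are detectable by an automorphism of $G^*$ (and not merely by an automorphism of the abstract chain gadget), which is why the ``external neighbour'' characterisation above is essential rather than an appeal to the intrinsic structure of $\text{chain}(v)$ alone.
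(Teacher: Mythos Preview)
Your overall strategy is sound, but the ``key structural observation'' fails as stated. In Construction~\ref{make:3-acyclic R_swap_n_auto} the chain gadget for $v$ has $3\deg_G(v)+\lambda(v)$ terminals, and only the first $3\deg_G(v)$ of them are attached to external vertices $e_j$; the remaining $\lambda(v)$ terminals were added precisely to make the chain sizes pairwise distinct and have \emph{no} neighbour outside $\text{chain}(v)$. So it is not true that within every even level the terminal is distinguished by possessing an external neighbour, and your argument that $\psi(t)$ must again be a terminal breaks down whenever $t$ is one of these free terminals (and breaks down entirely if $v$ happens to be isolated in $G$, which the construction does not forbid).

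The paper sidesteps this by using a slightly stronger consequence of Lemma~\ref{lem:acyclic colouring chain gadget properties}: under any 3-acyclic colouring of the chain gadget, \emph{all} even-level vertices receive the same colour, not just the terminals. Hence once Claim~\ref{clm:aut of G* maps level j of chain(v) to level j of chin(v)} places $\psi(t)$ at the same even level of $\text{chain}(v)$, one already has $f_2^*(\psi(t))=\phi(f_2^*)(v)$ with no need to argue that $\psi(t)$ is a terminal. Your computation $\phi(f_1^*)(v)=f_1^*(t)=f_2^*(\psi(t))=\phi(f_2^*)(v)$ then goes through verbatim. The fix is therefore minor: replace the terminal-detection step by the observation that the ``colour of $\text{chain}(v)$'' is in fact shared by every even-level vertex.
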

Let \( f_1^* \) and \( f_2^* \) be 3-acyclic colourings of \( G^* \) which are the same up to automorphisms.
That is, there exists an automorphism \( \psi \) of \( G^* \) such that \( f_1^*(\psi(x))=f_2^*(x) \) for all \( x\in V(G^*) \). 
By the special case \( k=3 \) of Lemma~\ref{lem:acyclic colouring chain gadget properties}, the chain gadget has exactly one 3-acyclic colouring up to colour swaps and automorphisms, namely the colouring in Figure~\ref{fig:k-acyclic colouring of chain gadget}. 
Observe that all even-level vertices have the same colour in Figure~\ref{fig:k-acyclic colouring of chain gadget}. 
Also, observe that every automorphism of the chain gadget maps vertices on Level~\( j \) to vertices on Level~\( j \) for each \( j\in \mathbb{N} \) (for a proof, see Claim~1.5 in Section~5 of the supplementary material). 
Hence, for every 3-acyclic colouring of the chain gadget, there is a colour \( c \) such that all even-level vertices of the chain gadget are coloured~\( c \). 
In particular, for each \( v\in V(G) \) and each \( i\in \{1,2\} \), there is a colour \( c_v^{(i)} \) such that \( f_i^* \) assigns colour \( c_v^{(i)} \) on all even-level vertices of \( \text{chain}(v) \). 
Consider an arbitrary vertex \( v \) of \( G \) and an arbitrary vertex \( x_v \) at Level~\( 2j \) of \( \text{chain}(v) \) for some \( j\in \mathbb{N} \). 
We know that \( f_i^* \) assigns colour \( c_v^{(i)} \) on even-level vertices of \( \text{chain}(v) \) for \( i\in \{1,2\} \). 
In particular, \( f_1^*(x_v)=c_v^{(1)} \). 
Since \( \psi \) maps \( x_v \) to a vertex in Level~\( 2j \) of \( \text{chain}(v) \), \( f_2^*(\psi(x_v))=c_v^{(2)} \). 
Since \( f_2^*(\psi(x))=f_1^*(x) \) for all \( x\in V(G^*) \), we have \( c_v^{(1)}=f_1^*(x_v)=f_2^*(\psi(x_v))=c_v^{(2)} \). 
For \( i\in \{1,2\} \), \( f_i^* \) assigns colour~\( c_v^{(i)} \) on even-level vertices of \( \text{chain}(v) \)  and in particular terminals of \( \text{chain}(v) \). 
Thus, \( c_v^{(i)} \) is the colour of \( \text{chain}(v) \) under \( f_i^* \) for \( i\in \{1,2\} \). 
Hence, \( (\phi(f_i^*))(v)=c_v^{(i)} \) for \( i\in \{1,2\} \). 
Since \( c_v^{(1)}=c_v^{(2)} \) , we have \( (\phi(f_1^*))(v)=(\phi(f_2^*))(v) \). 
Since \( v \) is arbitrary, \( \phi(f_1^*)=\phi(f_2^*) \). 
This proves Claim~\ref{clm:same up to swaps and auto imply image is same}.

\begin{claim}\label{clm:image is same imply same up to swaps and auto}
If \( \phi(f_1^*)=\phi(f_2^*) \) for two 3-acyclic colourings \( f_1^* \) and \( f_2^* \) of \( G^* \), then \( f_1^* \) and \( f_2^* \) are the same up to automorphisms.
\end{claim}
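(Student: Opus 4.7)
The plan is to construct, given two 3-acyclic colourings $f_1^*, f_2^*$ of $G^*$ with $\phi(f_1^*) = \phi(f_2^*) =: f$, an automorphism $\psi$ of $G^*$ satisfying $f_1^*(\psi(x)) = f_2^*(x)$ for every $x \in V(G^*)$, built by stitching together local automorphisms $\psi_v \in \text{Aut}(G^*,v)$ of the individual chain gadgets, one per vertex $v$ of $G$. Each $\psi_v$ will act only on $V(\text{chain}(v))$, and within it, will merely swap the two vertices of certain odd levels.

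First I would show that $f_1^*$ and $f_2^*$ already agree on every vertex outside the chain gadgets and on every terminal. By the definition of $\phi$, every terminal of $\text{chain}(v)$ is coloured $f(v)$ under both colourings, and $f$ is a proper 3-colouring of $G$. For each edge $e = uv$ of $G$ and each $j \in \{1,2,3\}$, the vertex $e_j$ is adjacent to the terminals $u_{vj}$ and $v_{uj}$, whose colours $f(u)$ and $f(v)$ are distinct; with only three colours available, $e_j$ is forced under both $f_1^*$ and $f_2^*$ to take the remaining third colour. Hence $f_1^*$ and $f_2^*$ agree on every $e_j$ as well.

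Next I would analyse the restrictions to a single chain gadget $\text{chain}(v)$. By Lemma~\ref{lem:acyclic colouring chain gadget properties}, both $f_1^*$ and $f_2^*$ colour every even-level vertex of $\text{chain}(v)$ by $f(v)$ and place a permutation of the two colours in $\{0,1,2\} \setminus \{f(v)\}$ on each odd level (which has $k-1=2$ vertices). Thus on each odd Level $2j-1$ of $\text{chain}(v)$ the two colourings either agree or swap the two non-$f(v)$ colours. I then define $\psi_v$ to swap the two vertices of every odd level where $f_1^*$ and $f_2^*$ disagree and fix all other vertices of $\text{chain}(v)$; extend $\psi_v$ by the identity outside $\text{chain}(v)$. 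By construction $f_1^*(\psi_v(x)) = f_2^*(x)$ for every $x \in V(\text{chain}(v))$. Composing the $\psi_v$ over all $v \in V(G)$ (which commute, because their non-identity supports, being sets of non-terminal odd-level vertices inside pairwise disjoint chain gadgets, are pairwise disjoint) then yields the desired automorphism $\psi$.

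The main obstacle is to verify that each $\psi_v$ is an automorphism of the full graph $G^*$, not merely of $\text{chain}(v)$. The key structural features I would rely on are that the two vertices of any odd Level $2j-1$ of $\text{chain}(v)$ share the same neighbourhood inside $\text{chain}(v)$, namely all vertices of Levels $2j-2$ and $2j$, and that odd-level vertices are never terminals, so they have no neighbours outside $\text{chain}(v)$. Hence swapping them preserves every edge of $G^*$; and since the terminals (being at even levels) are fixed by $\psi_v$, the edges from $\text{chain}(v)$ to the $e_j$'s are preserved too.
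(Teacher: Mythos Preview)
Your overall plan matches the paper's: both construct $\psi$ as a composition of local automorphisms $\psi_v\in\text{Aut}(G^*,v)$, one per chain gadget, chosen so that the induced permutation of odd-level vertices turns $f_2^*$ into $f_1^*$ on $\text{chain}(v)$; and both handle the vertices $e_j$ by observing that their colour is forced to be the unique element of $\{0,1,2\}\setminus\{f(u),f(v)\}$.

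There is, however, a concrete error in your verification that $\psi_v$ is an automorphism. You assert that ``the two vertices of any odd Level $2j-1$ of $\text{chain}(v)$ share the same neighbourhood inside $\text{chain}(v)$, namely all vertices of Levels $2j-2$ and $2j$''. This is false for $j\geq 2$. While both Level-$(2j-1)$ vertices are adjacent to all of Level~$2j$ (that is the $K_{2,3}$ block), each is adjacent to exactly \emph{one} vertex of Level~$2j-2$, via the matching that links consecutive blocks (vertex~$i$ of Level~$2j-2$ to vertex~$i$ of Level~$2j-1$ for $i=1,2$). Hence swapping only the two Level-$(2j-1)$ vertices does not give an automorphism of $\text{chain}(v)$, and your $\psi_v$ as defined is not an automorphism of $G^*$.

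The repair is straightforward: whenever you swap the two vertices at Level~$2j-1$, also swap their matched (non-terminal) partners at Level~$2j-2$. This enlarged swap \emph{is} an automorphism: the $K_{2,3}$ between Levels $2j-3$ and $2j-2$ is preserved because every Level-$(2j-3)$ vertex is adjacent to every Level-$(2j-2)$ vertex, the terminal at Level~$2j-2$ is fixed, and the matching edges to Level~$2j-1$ are carried to each other. Moreover, since both swapped Level-$(2j-2)$ vertices carry the common even-level colour $f(v)$ under each of $f_1^*$ and $f_2^*$, the identity $f_1^*(\psi_v(x))=f_2^*(x)$ still holds on all of $\text{chain}(v)$. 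With this correction your argument goes through and coincides with the paper's (which defers the explicit description of these automorphisms to its supplementary material).
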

Let \( f_1^* \) and \( f_2^* \) be two 3-acyclic colourings of \( G^* \), and let \( \phi(f_1^*)=\phi(f_2^*) \). 
Consider an arbitrary vertex \( v \) of \( G \). 
Suppose that \( \text{chain}(v) \) has \( \ell \) terminals. 
Since \( \phi(f_1^*)=\phi(f_2^*) \), the colour of \( \text{chain}(v) \) under \( f_1^* \) is equal to the colour of \( \text{chain}(v) \) under \( f_2^* \). 
By the special case \( k=3 \) of Lemma~\ref{lem:acyclic colouring chain gadget properties}, the chain gadget has exactly one 3-acyclic colouring up to colour swaps and automorphisms, namely the colouring in Figure~\ref{fig:k-acyclic colouring of chain gadget}. 
Observe that in Figure~\ref{fig:k-acyclic colouring of chain gadget}, (i)~all even-level vertices have colour~0, and (ii)~for \( k=3 \), for each \( j\in \{1,2,\dots,\ell\} \), vertices at Level~\( 2j-1 \) are assigned colours 1 and 2 from left to right. 
Also, observe that every automorphism of the chain gadget maps vertices on Level~\( j \) to vertices on Level~\( j \) for each \( j\in \mathbb{N} \) (for a proof, see Claim~1.5 in Section~5 of the supplementary material). 
Hence, for every 3-acyclic colouring of \( \text{chain}(v) \), there is a colour \( c \) such that (i)~all even-level vertices (of \( \text{chain}(v) \)) are coloured~\( c \), and (ii)~for each \( j\in \{1,2,\dots,\ell\} \), vertices at Level~\( 2j-1 \) are assigned colours 1 and 2 in some order. 
Thus, for \( i\in \{1,2\} \), \( f_i^* \) assigns the same colour, say colour \( c_i \), on even-level vertices of \( \text{chain}(v) \) and in particular on the terminals of \( \text{chain}(v) \). 
Since the colour on terminals of \( \text{chain}(v) \) under \( f_1^* \) (i.e., the colour of \( \text{chain}(v) \) under \( f_1^* \)) is equal to the colour on terminals of \( \text{chain}(v) \) under \( f_2^* \), we have \( c_1=c_2 \). 
That is, both \( f_1^* \) and \( f_2^* \) assign the same colour, say colour~\( c \), on even-level vertices of \( \text{chain}(v) \). 
Owing to this and the fact that both \( f_1^* \) and \( f_2^* \) assign a permutation of colours \( \{0,1,2\}\setminus \{c\} \) on vertices at Level~\( 2j-1 \) of \( \text{chain}(v) \) for each \( j\in \{1,2,\dots,\ell\} \), \( f_1^* \) restricted to the vertex set of \( \text{chain}(v) \)  (i.e., \( {f_1^*}_{\restriction{V(\text{chain}(v))}} \)) can be obtained from \( {f_2^*}_{\restriction{V(\text{chain}(v))}} \) by applying a permutation of colours on the set of vertices at Level~\( 2j-1 \) for each \( j\in \{1,2,\dots,\ell\} \). 
Since applying a permutation of colours on the set of vertices at Level~\( 2j-1 \) for each \( j\in \{1,2,\dots,\ell\} \) corresponds to an automorphism of \( \text{chain}(v) \) (see Figure~2 in the supplement for a demonstration), there exists an automorphism \( \psi_v^* \) of \( \text{chain}(v) \) such that \( {f_1^*}_{\restriction{V(\text{chain}(v))}}={f_2^*}_{\restriction{V(\text{chain}(v))}}\circ \psi_v^* \). 
Hence, \( f_1^*(x)=f_2^*(\psi_v^*(x)) \) for each \( x\in V(\text{chain}(v)) \). 
Define \( \psi_v\colon V(G^*)\to V(G^*) \) as \( \psi_v(x)=\psi_v^*(x) \) for each \( x\in V(\text{chain}(v)) \) and \( \psi_v(x)=x \) otherwise. 
Clearly, \( f_1^*(x)=f_2^*(\psi_v(x)) \) for each \( x\in V(\text{chain}(v)) \). 
Define \( \psi \) as the function composition of \( \psi_v \)'s (i.e., \( \psi=\psi_{v_1}\circ \psi_{v_2}\circ \dots \circ \psi_{v_n} \) if \( V(G)=\{v_1,v_2,\dots,v_v\} \)). 
Since \( f_1^*(x)=f_2^*(\psi_v(x)) \) for each \( x\in V(\text{chain}(v)) \) and \( \psi_v \) is an automorphism of \( G^* \) that fixes vertices not in \( \text{chain}(v) \) for each \( v\in V(G) \), we have \( f_1^*(x)=f_2^*(\psi(x)) \) for each vertex \( x \) in some chain gadget of \( G^* \). 
To prove that \( f_1^*=f_2^*\circ \psi \), it suffices to show that \( f_1^*(x)=f_2^*(x) \) for each vertex \( x \) of \( G^* \) which is not in any gadget; i.e., \( x \) is a vertex of the form \( e_j^{(t)} \) in Figure~\ref{fig:eg 3-acyclic R_swap_n_auto}, where \( e^{(t)}=uv \) is an edge in \( G \). 
Recall that the vertex \( e_j^{(t)} \) is adjacent to both \( u_{vj} \) and \( v_{uj} \) in \( G^* \). 
For \( i\in \{1,2\} \), \( \phi(f_i^*) \) is a 3-colouring of \( G^* \), and thus \( f_i^*(u_{vj})=(\phi(f_i^*))(u)\neq (\phi(f_i^*))(v)=f_i^*(v_{uj}) \). 
For \( i\in \{1,2\} \), since \( f_i^*(u_{vj})\neq f_i^*(v_{uj}) \), the colour of \( e_j^{(t)} \) under \( f_i^* \) is the unique colour in \( \{0,1,2\}\setminus \{f_i^*(u_{vj}),f_i^*(v_{uj})\} \). 
In other words, the colour of \( e_j^{(t)} \) under \( f_i^* \) is the unique colour in \( \{0,1,2\}\setminus \{(\phi(f_i^*))(u),(\phi(f_i^*))(v)\} \). 
Since \( \phi(f_1^*)=\phi(f_2^*) \), the unique colour in \( \{0,1,2\}\setminus \{(\phi(f_1^*))(u),(\phi(f_1^*))(v)\} \) is the same as the unique colour in \( \{0,1,2\}\setminus \{(\phi(f_2^*))(u),(\phi(f_2^*))(v)\} \). 
That is, \( f_1^*(e_j^{(t)})=f_2^*(e_j^{(t)}) \). 
This completes the proof of \( f_1^*=f_2^*\circ \psi \), and thus proves Claim~\ref{clm:image is same imply same up to swaps and auto}.

By Claim~\ref{clm:same up to swaps and auto imply image is same} and Claim~\ref{clm:image is same imply same up to swaps and auto}, \( \phi \) gives a bijection from the set of 3-acyclic colourings of \( G^* \) up to automorphisms to the set of 3\nobreakdash-colourings of~\( G \). 
Therefore, \( \phi \) also gives a bijection from the set of 3-acyclic colourings of \( G^* \) up to colour swaps and automorphisms to the set of 3-colourings of \( G \) up to colour swaps. 
In particular, the number of 3-colourings of \( G \) up to colour swaps is equal to the number of 3-acyclic colourings of \( G^* \) up to colour swaps and automorphisms. 
This proves the guarantee.
\end{proof}

Let us consider the time complexity of Construction~\ref{make:3-acyclic R_swap_n_auto}. 
Assume that \( V(G)=\{v_1,\allowbreak v_2,\dots,v_n\} \), where \( \deg_G(v_1)\leq \deg_G(v_2)\leq \dots \leq \deg_G(v_n) \) and \( \lambda(v_i)=i \) for \( 1\leq i\leq n \). 
Let \( m=|E(G)| \). 
For \( 1\leq i\leq n \), since \( \text{chain}(v_i) \) has \( \deg_G(v_i)+i \) terminals, there are at most \( 5(\deg_G(v_i)+i)\leq 5\deg_G(v_i)+5n \) vertices and at most \( 6(\deg_G(v_i)+i)+2(\deg_G(v_i)+i-1)\leq 8\deg_G(v_i)+8n \) edges in \( \text{chain}(v_i) \). 
There are \( 3m \) vertices and \( 6m \) edges that are not in any chain gadget in \( G' \). 
Thus, in \( G' \), there are at most \( \left(\sum_{i=1}^{n} (5\deg_G(v_i)+5n)\right)+3m\leq 10m+5n^2+3m\leq 18\,n^2 \) vertices and \( \left(\sum_{i=1}^{n} (8\deg_G(v_i)+8n)\right)+6m\leq 16m+8n^2+6m\leq 30\,n^2 \) edges. 
Hence, Construction~\ref{make:3-acyclic R_swap_n_auto} requires only time polynomial in \( n \). 
Construction~\ref{make:3-acyclic R_swap_n_auto} establishes a reduction from the problem \textsc{Another 3-Colouring}\,[\( \mathcal{R}_{\text{swap}} \)]\,(\( \Delta=8 \)) to the problem \textsc{Another 3-Acyclic Colouring}\,[\( \mathcal{R}_{\text{swap+auto}} \)]\,(\( \Delta=4 \)). 
Since \textsc{Another 3-Colouring}\,[\( \mathcal{R}_{\text{swap}} \)]\,(\( \Delta=8 \)) is NP-complete (see page~\pageref{lnk:another 3-colouring}), we have the following theorem. 
\begin{theorem}\label{thm:3-acyclic R_swap max degree 4}
For bipartite graphs of maximum degree~4, \textsc{Another 3-Acyclic Colouring}\,\( [\mathcal{R}_{\text{swap+auto}}] \) is NP-complete, and thus \textsc{Unique 3-Acyclic Colouring}\,\( [\mathcal{R}_{\text{swap+auto}}] \) is coNP-hard. 
\qed
\end{theorem}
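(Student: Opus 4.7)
The plan is to invoke Construction~\ref{make:3-acyclic R_swap_n_auto} as a polynomial-time many-one reduction from \textsc{Another 3-Colouring}\,[\( \mathcal{R}_{\text{swap}} \)]\,(\( \Delta=8 \)) --- which is NP-complete, see page~\pageref{lnk:another 3-colouring} --- to \textsc{Another 3-Acyclic Colouring}\,[\( \mathcal{R}_{\text{swap+auto}} \)] on the class of bipartite graphs of maximum degree~4. Essentially all the heavy lifting is already in the guarantee of the construction; only the output-class properties and the reduction-style packaging remain.

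First I would check that the output graph \( G^* \) lies in the target class. For bipartiteness, within each chain gadget the odd-level vertices form one side and the even-level vertices the other, and each new vertex \( e_j \) is joined only to the even-level terminals \( u_{vj} \) and \( v_{uj} \), so the bipartitions of the chain gadgets assemble coherently with the \( e_j \) placed on the odd side. For the maximum-degree bound, internal chain vertices have degree at most~3; a terminal acquires exactly one extra neighbour \( e_j \), raising its degree to~4; and each \( e_j \) has degree~2, so \( \Delta(G^*)\leq 4 \). The construction runs in polynomial time since \( |V(G^*)|, |E(G^*)|=O(n^2) \) by the counting argument already given just above the theorem.

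Given an instance \( (G,f) \) of the source problem, the reduction outputs \( (G^*, f^*) \), where \( G^* \) is produced by Construction~\ref{make:3-acyclic R_swap_n_auto} and \( f^* \) is the canonical preimage of \( f \) under \( \phi \) constructed in Claim~\ref{clm:phi is onto}: colour the even-level vertices of \( \mathrm{chain}(v) \) with \( f(v) \), colour the odd-level vertices with the other two colours in a fixed order, and colour each \( e_j \) (with \( e=uv \)) by the unique colour in \( \{0,1,2\}\setminus\{f(u),f(v)\} \). By Claims~\ref{clm:same up to swaps and auto imply image is same} and~\ref{clm:image is same imply same up to swaps and auto}, the map \( \phi \) descends to a bijection from 3-acyclic colourings of \( G^* \) modulo \( \mathcal{R}_{\text{swap+auto}} \) onto 3-colourings of \( G \) modulo \( \mathcal{R}_{\text{swap}} \), with \( \phi(f^*)=f \); hence \( (G,f) \) is a yes-instance of the source problem if and only if \( (G^*,f^*) \) is a yes-instance of the target problem, which gives NP-hardness.

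The main subtlety I expect is NP-membership: non-equivalence of a candidate certificate \( g \) with \( f^* \) under \( \mathcal{R}_{\text{swap+auto}} \) quantifies over \( \mathrm{Aut}(G^*) \), which for arbitrary graphs would be graph-isomorphism flavoured. Here, however, Claim~\ref{clm:aut G' made of auts of chains} factors \( \mathrm{Aut}(G^*) \) as a product of per-chain-gadget automorphisms (thanks to the distinct terminal counts enforced by \( \lambda \)), and within each chain gadget the automorphism group acts by independent permutations of the two vertices at each odd level, so equivalence is polynomial-time testable and the problem lies in NP. Finally, \textsc{Unique 3-Acyclic Colouring}\,[\( \mathcal{R}_{\text{swap+auto}} \)] on bipartite graphs of maximum degree~4 is immediately coNP-hard: on a 3-acyclic colourable input together with any one known 3-acyclic colouring, uniqueness is precisely the negation of admitting another solution, so a polynomial-time algorithm for uniqueness would decide the complement of \textsc{Another 3-Acyclic Colouring}\,[\( \mathcal{R}_{\text{swap+auto}} \)] in polynomial time.
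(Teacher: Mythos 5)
Your proposal is correct and follows essentially the same route as the paper: Construction~\ref{make:3-acyclic R_swap_n_auto} is invoked as a polynomial-time reduction from \textsc{Another 3-Colouring}\,[\( \mathcal{R}_{\text{swap}} \)]\,(\( \Delta=8 \)), the bijection \( \phi \) from its guarantee carries the equivalence-class correspondence, and the bipartiteness, degree, and size checks match the paper's. One minor slip in your degree audit: for \( k=3 \) the \emph{internal} odd-level chain vertices (Levels \( 3,5,\dots \)) are the ones of degree 4 (three neighbours in the adjacent even level plus one matching edge), while a terminal has degree \( 2+1=3 \); the conclusion \( \Delta(G^*)=4 \) is nonetheless correct.
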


Observe that there is no universal vertex (i.e., a vertex adjacent to every other vertex) in the output graph in Construction~\ref{make:3-acyclic R_swap_n_auto}. 
Hence, we have the following corollary. 
\begin{corollary}\label{cor:3-acyclic R_swap_n_auto no uni}
\textsc{Another 3-Acyclic Colouring}\,\( [\mathcal{R}_{\text{swap+auto}}] \) is NP-complete for graphs without any universal vertex. 
\end{corollary}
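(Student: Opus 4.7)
The plan is to reuse the reduction established in the proof of Theorem~\ref{thm:3-acyclic R_swap max degree 4} \emph{verbatim} and simply verify that every output graph $G^*$ of Construction~\ref{make:3-acyclic R_swap_n_auto} already lies in the target class of graphs without any universal vertex. Because NP-completeness is inherited by any subclass that contains the image of a polynomial-time many-one reduction from an NP-hard problem, this observation alone suffices.

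First, I would recall that Construction~\ref{make:3-acyclic R_swap_n_auto} guarantees $\Delta(G^*)\leq 4$. Since a universal vertex in a simple graph on $N$ vertices has degree $N-1$, any graph with maximum degree at most $4$ and at least $6$ vertices automatically contains no universal vertex. It therefore suffices to show that $|V(G^*)|\geq 6$ for every nontrivial input to the reduction. To this end, I would appeal to the vertex count performed right after Construction~\ref{make:3-acyclic R_swap_n_auto}: with $k=3$, each chain gadget $\text{chain}(v)$ contributes $(2k-1)(3\deg_G(v)+\lambda(v))=5(3\deg_G(v)+\lambda(v))\geq 5$ vertices, since $\lambda(v)\geq 1$ by definition of $\lambda$. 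Because instances of \textsc{Another 3-Colouring}\,$[\mathcal{R}_{\text{swap}}]\,(\Delta=8)$ on at most two vertices are trivial and can be decided in constant time, we may assume the input $G$ has at least two vertices; this yields at least two vertex-disjoint chain gadgets inside $G^*$ and hence $|V(G^*)|\geq 10>5$.

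The main (and essentially only) obstacle is this trivial-case bookkeeping for tiny inputs, which is absorbed into the polynomial-time preprocessing of the reduction. Once handled, the same reduction as in Theorem~\ref{thm:3-acyclic R_swap max degree 4} witnesses NP-completeness over the subclass of graphs without any universal vertex, giving the corollary with no further work.
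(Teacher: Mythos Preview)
The proposal is correct and follows the same approach as the paper: both observe that the output graph $G^*$ of Construction~\ref{make:3-acyclic R_swap_n_auto} has no universal vertex, so the reduction behind Theorem~\ref{thm:3-acyclic R_swap max degree 4} already lands in the desired subclass. The paper leaves this as a one-line observation, whereas you spell out the reason via $\Delta(G^*)\le 4$ together with a lower bound on $|V(G^*)|$.
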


Next, we show that for \( k\geq 4 \), \textsc{Another \( k \)-Acyclic Colouring}\,\( [\mathcal{R}_{\text{swap+auto}}] \) is NP-complete. 

\begin{figure}[hbt]
\centering
\begin{minipage}[b]{0.35\textwidth}
\centering
\begin{tikzpicture}
\coordinate (level1) node[left=0.5cm]{Level 1};
\path (level1)++(0,-1) coordinate (level2) node[left=0.5cm]{Level 2}++(0,-0.75) coordinate (level3) node[left=0.5cm]{Level 3}
++(0,-1) coordinate (level4) node[left=0.5cm]{Level 4}
++(0,-2) coordinate (level2n-1) node[left=0.5cm]{Level \( 2t-1 \)}
++(0,-1) coordinate (level2n) node[left=0.5cm]{Level \( 2t \)};
\path (level4)-- node[pos=0.5,xshift=0.4cm,font=\LARGE,rotate=90]{.\,.\,.} (level2n-1);

\path (level1) node(11)[bigdot]{}++(0.75,0) node(12)[bigdot]{};
\path (level2) node(21)[dot]{}++(0.75,0) node(22)[dot]{}++(0.75,0) node(2k)[dot]{} node[terminal]{};
\path (level3) node(31)[bigdot]{}++(0.75,0) node(32)[bigdot]{};
\path (level4) node(41)[dot]{}++(0.75,0) node(42)[dot]{}++(0.75,0) node(4k)[dot]{} node[terminal]{};
\path (level2n-1) node(2n-11)[bigdot]{}++(0.75,0) node(2n-12)[bigdot]{};
\path (level2n) node(2n1)[dot]{}++(0.75,0) node(2n2)[dot]{}++(0.75,0) node(2nk)[dot]{} node[terminal]{};

\draw
(11) -- (21)
(11) -- (22)
(11) -- (2k);
\draw
(12) -- (21)
(12) -- (22)
(12) -- (2k);
\draw (21)--(31)  (22)--(32);
\draw 
(31) -- (41)
(31) -- (42)
(31) -- (4k);
\draw 
(32) -- (41)
(32) -- (42)
(32) -- (4k);
\draw (41)--+(0,-0.5)  (42)--+(0,-0.5);
\draw (2n-11)--+(0,0.5)  (2n-12)--+(0,0.5);
\draw 
(2n-11) -- (2n1)
(2n-11) -- (2n2)
(2n-11) -- (2nk);
\draw 
(2n-12) -- (2n1)
(2n-12) -- (2n2)
(2n-12) -- (2nk);

\end{tikzpicture}
\captionsetup{width=0.8\textwidth}
\caption{Chain gadget in Construction~\ref{make:3-acyclic R_swap_n_auto}.}
\label{fig:3-acyclic chain gadget}
\end{minipage}\hfill
\begin{minipage}[b]{0.6\textwidth}
\centering
\begin{tikzpicture}
\coordinate (level1);
\path (level1)++(0,-1) coordinate (level2) ++(0,-0.75) coordinate (level3)
++(0,-1) coordinate (level4) 
++(0,-2) coordinate (level2n-1) 
++(0,-1) coordinate (level2n);

\path (level2) node(u1)[dot][label=below:\( u_1 \)]{};
\path (level4) node(u2)[dot][label=below:\( u_2 \)]{};
\path (level2n)+(0,0.25) node(uq)[dot][label=below:\( u_q \)]{};
\path (u2)-- node[font=\LARGE,rotate=90]{.\,.\,.} (uq);

\path (u1)--+(3.5,0) node(v1)[dot][label={[name=v1Label]below:\( v_1 \)}]{};
\path (u2)--+(3.5,0) node(v2)[dot][label=below:\( v_2 \)]{};
\path (level2n)--+(3.5,0) node(vn)[dot][label={[name=vnLabel]below:\( v_n \)}]{};
\path (v2)-- node[font=\LARGE,rotate=90]{.\,.\,.} (vn);

\node (mrFit)[fit={(v1Label)  (vn)},ellipse,inner sep=0pt,outer sep=0pt]{}; \path (mrFit) node[ellipse,draw,dotted,minimum width=1.25cm,minimum height=6cm][label=below:\( G \)]{};
\node (mrFit2)[fit={(u1)  (uq)},ellipse,inner sep=0pt,outer sep=0pt]{}; \path (mrFit2) node[ellipse,draw,dotted,minimum width=3.0cm,minimum height=6cm][label=below:\( K_q \)]{};
\draw (u1)--(v1)  (u1)--(v2)  (u1)--(vn);
\draw (u2)--(v1)  (u2)--(v2)  (u2)--(vn);
\draw (uq)--(v1)  (uq)--(v2)  (uq)--(vn);
\draw (u1) to[bend right=80] (u2);
\draw (u2) to[bend right=80] (uq);
\draw (u1) to[bend right=80] (uq);
\end{tikzpicture}
\caption[Graph \( G' \) in Construction~\ref{make:k-acyclic R_swap_n_auto}]{Graph \( G' \) in Construction~\ref{make:k-acyclic R_swap_n_auto} (edges within the copy of \( G \) are not displayed).}
\label{fig:G' k-acyclic R_swap_n_auto}
\end{minipage}\end{figure}

\begin{construct}\label{make:k-acyclic R_swap_n_auto}
\emph{Parameter:} A positive integer \( q \).\\
\emph{Input:} A graph \( G \) without any universal vertex.\\
\emph{Output:} A graph \( G' \).\\
\emph{Guarantee:} \( G \) has a unique 3-acyclic colouring up to colour swaps and automorphisms if and only if \( G' \) has a unique \( (q+3) \)-acyclic colouring up to colour swaps and automorphisms.\\
\emph{Steps:}\\
Let \( v_1,v_2,\dots,v_n \) be the vertices in \( G \). 
Let \( H \) be a graph isomorphic to \( K_q \) with vertex set \( \{u_1,u_2,\dots,u_q\} \). 
To construct \( G' \), introduce a copy of \( G \) and a copy of \( H \), and join each \( u_j \) to each \( v_i \) for \( 1\leq i\leq n \) and \( 1\leq j\leq q \) (i.e., \( G' \) is the graph join of \( G \) and \( K_q \); see Figure~\ref{fig:G' k-acyclic R_swap_n_auto}). 
\end{construct}
\begin{proof}[Proof of Guarantee]
Let \( U=\{u_1,u_2,\dots,u_q\} \) and \( V=\{v_1,v_2,\dots,v_n\} \). 
Since there is no universal vertex in \( G \), the set \( U \) is precisely the set of universal vertices in \( G' \).

First, let us consider the structure of automorphisms of \( G' \). 
Let \( \psi' \) be an automorphism of \( G' \). 
Since automorphisms preserve the vertex degrees \cite[Lemma~1.3.1]{godsil_royle}, \( \psi' \) maps each universal vertex in \( G' \) to a universal vertex in \( G' \) (i.e., \( \psi'(u_j)\in U \) for all \( u_j\in U \)). 
Hence, \( \psi' \) maps each vertex in \( V \) to a vertex in \( V \) (i.e., \( \psi'_{\restriction V} \) is a bijection from \( V \) to itself). 
Since \( \psi' \) is an automorphism of \( G' \) and its restriction to \( V \) is a bijection from \( V \) to itself, \( \psi' \) restricted to \( V \) is an automorphism of \( G'[V] \). 
Since \( G\cong G'[V] \), we have the following. 
\setcounter{claim}{0}
\begin{claim}\label{clm:aut G' restricted to G}
For every automorphism \( \psi' \) of \( G' \), the restriction of \( \psi' \) to \( V(G) \) is an automorphism of \( G \). 
\end{claim}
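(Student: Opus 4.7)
The approach is to exploit the fact that the vertices of $U=\{u_1,\dots,u_q\}$ are distinguished from those of $V=\{v_1,\dots,v_n\}$ purely by a degree-theoretic (hence automorphism-invariant) property in $G'$. Once we show that every automorphism of $G'$ setwise fixes $V$, the claim follows almost immediately by restriction.

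The plan is as follows. First, I will identify $U$ as exactly the set of universal vertices of $G'$. Each $u_j$ is adjacent to every other $u_{j'}$ (because $H\cong K_q$) and to every $v_i$ (by construction of the join), so $\deg_{G'}(u_j)=(q-1)+n=|V(G')|-1$, which shows $U$ consists of universal vertices of $G'$. Conversely, a vertex $v_i\in V$ has degree $\deg_G(v_i)+q$ in $G'$, and since $G$ has no universal vertex we have $\deg_G(v_i)<n-1$, whence $\deg_{G'}(v_i)<n+q-1=|V(G')|-1$, so no vertex in $V$ is universal in $G'$. Thus $U$ is precisely the set of universal vertices of $G'$.

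Next, I invoke the fact that automorphisms preserve vertex degrees \cite[Lemma~1.3.1]{godsil_royle}; in particular, they permute the universal vertices among themselves. Hence for any automorphism $\psi'$ of $G'$, we have $\psi'(U)=U$, and because $V(G')=U\cup V$ is a disjoint union, we also get $\psi'(V)=V$. Now $\psi'$ restricted to $V$ is a bijection $V\to V$, and since edges among $V$ in $G'$ are exactly edges of $G$, the map $\psi'_{\restriction V}$ preserves adjacency within $G$ and therefore is an automorphism of the induced subgraph $G'[V]\cong G$.

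There is no real obstacle here: the only subtlety worth guarding against is the direction ``$U$ is \emph{exactly} the universal vertices'', which would fail without the hypothesis that $G$ has no universal vertex. Everything else is standard. I would present the argument in two short paragraphs matching the two steps above, referencing the degree computation for the first step and the degree-preservation property of automorphisms for the second step.
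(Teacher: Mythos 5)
Your proposal is correct and follows essentially the same route as the paper: identify \( U \) as precisely the set of universal vertices of \( G' \) (which requires the hypothesis that \( G \) has no universal vertex), use degree-preservation of automorphisms to conclude \( \psi'(U)=U \) and hence \( \psi'(V)=V \), and then observe that the restriction is an automorphism of \( G'[V]\cong G \). The only difference is that you spell out the degree computation that the paper leaves implicit, which is a harmless (indeed helpful) addition.
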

In the reverse direction, let \( \psi \) be an automorphism of \( G \). 
Define \( \psi'\colon V(G')\to V(G') \) as \( \psi'(v_i)=\psi(v_i) \) for \( 1\leq i\leq n \) and \( \psi'(u_j)=u_j \) for \( 1\leq j\leq q \). 
Clearly, \( \psi' \) is a bijection from \( V(G') \) to itself. 
Next, we show that \( \psi' \) preserves adjacency as well as non-adjacency. 
Since \( \psi=\psi'_{\restriction V(G)} \), it is easy to verify that for \( 1\leq i<j\leq n \),\; \( \psi'(v_i)\psi'(v_j)\in E(G') \) if and only if \( v_iv_j\in E(G') \). 
Each \( u_j\in U \) is a universal vertex in \( G' \) and \( \psi'(u_j)=u_j \); as a result, \( u_jx\in E(G') \) and \( \psi'(u_j)\psi'(x)\in E(G') \) for all \( x\in V(G') \). 
Therefore, \( \psi' \) preserves adjacency as well as non-adjacency, and thus \( \psi' \) is an automorphism of \( G' \). 
Thus, we have the following claim. 
\begin{claim}\label{clm:aut G extension to G'}
For every automorphism \( \psi \) of \( G \), the extension \( \psi' \) of \( \psi \) into \( V(G') \) defined as \( \psi'(v_i)=\psi(v_i) \) for \( 1\leq i\leq n \) and \( \psi'(u_j)=u_j \) for \( 1\leq j\leq q \) is an automorphism of \( G' \). 
\end{claim}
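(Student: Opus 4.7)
The plan is to verify directly that the map $\psi'$ defined by $\psi'(v_i)=\psi(v_i)$ for $1\leq i\leq n$ and $\psi'(u_j)=u_j$ for $1\leq j\leq q$ is (i)~a bijection on $V(G')$ and (ii)~preserves adjacency and non-adjacency. Since $V(G')=U\cup V$ with $U\cap V=\emptyset$, and $\psi'$ restricts to a bijection on each part ($\psi$ is a bijection on $V$ by hypothesis and the identity is a bijection on $U$), the bijectivity of $\psi'$ is immediate.

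For edge preservation, I would split the argument according to which of the three edge types an edge $xy$ of $G'$ belongs to: (a)~both endpoints in $V$, (b)~both in $U$, or (c)~one in $U$ and one in $V$. In case~(a), the edge $v_iv_j$ lies in $G'[V]\cong G$, so $v_iv_j\in E(G')$ iff $v_iv_j\in E(G)$ iff $\psi(v_i)\psi(v_j)\in E(G)$ iff $\psi'(v_i)\psi'(v_j)\in E(G')$, using that $\psi$ is an automorphism of $G$. In case~(b), every edge $u_ju_{j'}$ lies in the copy of $K_q$, and since $\psi'$ fixes $U$ pointwise, $\psi'(u_j)\psi'(u_{j'})=u_ju_{j'}$ is trivially preserved. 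In case~(c), for any $u_j\in U$ and any $x\in V(G')$, the edge $u_jx$ exists in $G'$ because $u_j$ is a universal vertex of $G'$, and similarly $\psi'(u_j)\psi'(x)=u_j\psi'(x)$ is also an edge because $u_j$ remains universal. The same argument applied in reverse (or applied to $\psi^{-1}$) handles non-edges.

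Thus $\psi'$ is adjacency- and non-adjacency-preserving on all pairs of vertices, which together with bijectivity establishes that $\psi'$ is an automorphism of $G'$. There is no substantial obstacle here: the main point is the rigid structure of $G'$ as the join of $G$ with $K_q$, which forces the automorphism $\psi$ of $G$ to extend in exactly the prescribed way once we demand that $U$ be fixed pointwise. Note that the symmetric companion statement (Claim~\ref{clm:aut G' restricted to G}) combined with the present claim effectively identifies $\mathrm{Aut}(G')$ with the product of $\mathrm{Aut}(G)$ and the symmetric group on $U$, though we do not need the full identification for the present argument.
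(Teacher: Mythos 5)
Your proposal is correct and follows essentially the same route as the paper: check bijectivity, then verify adjacency and non-adjacency preservation by using that $\psi$ is an automorphism of $G\cong G'[V]$ for edges inside $V$ and that the vertices of $U$ are universal (and fixed pointwise) for all pairs meeting $U$. The only cosmetic difference is that you split the latter into two cases (both endpoints in $U$ versus one in each part) where the paper handles them in one stroke via the universal-vertex observation.
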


We are ready to prove the guarantee. 
Since \( G' \) is the graph join of \( K_q \) and \( G \), we have \( \chi_a(G')=\min\{\chi_a(K_q)+n,q+\chi_a(G)\} \) by \cite[Lemma~2.1]{lyons} and thus \( \chi_a(G')=\min\{q+n,q+\chi_a(G)\}=q+\chi_a(G) \). 
Hence, \( G \) is 3-acyclic colourable if and only if \( G' \) is \( (q+3) \)-acyclic colourable. 
To complete the proof of the guarantee, it suffices to prove the following
claim.
\begin{claim}\label{clm:3-acyclic col to q+3 acyclic col}
\( G \) has two 3-acyclic colourings \( f_1 \) and \( f_2 \) such that \mbox{\( (f_1,f_2)\notin \mathcal{R}_{\text{swap+auto}}(G,3) \)} if and only if \( G' \) has two \( (q+3) \)-acyclic colourings \( f'_1 \) and \( f'_2 \) such that\\ \( (f'_1,f'_2)\notin \mathcal{R}_{\text{swap+auto}}(G',q+3) \).
\end{claim}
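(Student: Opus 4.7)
The plan is to exploit the rigid structure of the join $G' = G + K_q$: because $U$ induces $K_q$ and every $u_j$ is universal in $G'$, every $(q+3)$-acyclic colouring of $G'$ must assign $q$ pairwise distinct colours to $U$ and the remaining three colours to $V = V(G)$. Combined with Claims~\ref{clm:aut G' restricted to G} and \ref{clm:aut G extension to G'}, this lets me transport the $\mathcal{R}_{\text{swap+auto}}$-equivalence relation between $G$ and $G'$. I prove both directions by contrapositive.

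For the forward direction, given 3-acyclic colourings $f_1, f_2$ of $G$, I extend each $f_i$ to a colouring $f_i'$ of $G'$ by assigning the new colours $3, 4, \ldots, q+2$ to $u_1, u_2, \ldots, u_q$ respectively. A case analysis on pairs of colour classes verifies that $f_i'$ is $(q+3)$-acyclic: two singletons from $\{3,\ldots,q+2\}$ contribute a single edge; a singleton paired with an old colour gives a star rooted at a universal vertex; and two old colours induce exactly $G[V_a \cup V_b]$, a forest by hypothesis. If $(f_1',f_2') \in \mathcal{R}_{\text{swap+auto}}(G', q+3)$ via $(\sigma',\psi')$, then $\psi'(U) = U$ (automorphisms preserve the set of universal vertices, which equals $U$), and inspecting the equivalence condition at vertices of $U$ forces $\sigma'(\{3,\ldots,q+2\}) = \{3,\ldots,q+2\}$ and hence $\sigma'(\{0,1,2\}) = \{0,1,2\}$. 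The pair $(\sigma'|_{\{0,1,2\}}, \psi'|_V)$, with $\psi'|_V$ an automorphism of $G$ by Claim~\ref{clm:aut G' restricted to G}, is then an equivalence of $f_1$ and $f_2$, as required.

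For the reverse direction, starting from inequivalent $(q+3)$-acyclic colourings $f_1', f_2'$ of $G'$, I first apply colour swaps (which preserve $\mathcal{R}_{\text{swap+auto}}$-equivalence classes) so that in both colourings $U$ uses the palette $\{3,\ldots,q+2\}$ and $V$ uses $\{0,1,2\}$; this is possible because $G'[U]\cong K_q$ forces $q$ distinct colours on $U$ and each $v \in V$ is adjacent to every $u_j$. Let $f_i = f_i'|_V$, which is a 3-acyclic colouring of $G$. If $(f_1,f_2)\in \mathcal{R}_{\text{swap+auto}}(G,3)$ via $(\sigma,\psi)$, I extend $\psi$ to an automorphism $\psi'$ of $G'$ fixing $U$ pointwise using Claim~\ref{clm:aut G extension to G'}, and extend $\sigma$ to a permutation $\sigma'$ of $\{0,\ldots,q+2\}$ by setting $\sigma'(f_1'(u_j)) = f_2'(u_j)$ on $\{3,\ldots,q+2\}$, which is well-defined since $f_1'|_U$ is a bijection onto $\{3,\ldots,q+2\}$. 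Verifying $f_2'(\psi'(x)) = \sigma'(f_1'(x))$ piece by piece on $V$ (from the equivalence of $f_1,f_2$) and on $U$ (by construction of $\sigma'$) shows $(f_1',f_2')\in \mathcal{R}_{\text{swap+auto}}(G',q+3)$, contradicting inequivalence.

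The work is essentially bookkeeping rather than a serious obstacle; the main delicacy is the palette-normalization step in the reverse direction, which crucially uses the clean $q$-versus-$3$ split of colours forced by the join structure. Once this is in place, Claims~\ref{clm:aut G' restricted to G} and \ref{clm:aut G extension to G'} translate automorphisms in opposite directions between $G$ and $G'$, and the rest of the argument is a direct check of the equivalence condition.
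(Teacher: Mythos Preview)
Your proof is correct and follows essentially the same approach as the paper's: both directions are argued by contrapositive via the same extension/restriction maps, using Claims~\ref{clm:aut G' restricted to G} and \ref{clm:aut G extension to G'} to transport automorphisms, and the universality of the $u_j$'s to force the $q$-versus-$3$ colour split. Your version is slightly more explicit in verifying that the extended colourings $f_i'$ are acyclic and uses a marginally weaker palette normalization in the reverse direction (compensated by defining $\sigma'$ on $\{3,\dots,q+2\}$ via $f_1'|_U$ and $f_2'|_U$ rather than fixing it as the identity), but these are cosmetic differences.
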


To prove Claim~\ref{clm:3-acyclic col to q+3 acyclic col}, suppose that \( G \) admits two 3-acyclic colourings \( f_1 \) and \( f_2 \) which are not the same up to colour swaps and automorphisms (i.e., \( (f_1,f_2)\notin \mathcal{R}_{\text{swap+auto}}(G,3) \)). 
Without loss of generality, assume that \( f_1 \) and \( f_2 \) use colours \( 0,1 \) and~\( 2 \). 
For \( \ell \in \{1,2\} \), define \( f'_\ell \colon V(G')\to V(G') \) as \( f'_\ell(v_i)=f_\ell(v_i) \) for \( 1\leq i\leq n \) and \( f'_\ell(u_j)=j+2 \) for \( 1\leq j\leq q \). 
Clearly, \( f'_1 \) and \( f'_2 \) are \( (q+3) \)-colourings of \( G' \), and \( f_\ell={f'_\ell}_{\,\restriction V(G)} \) for \( \ell \in \{1,2\} \). 
We claim that \( f'_1 \) and \( f'_2 \) are not the same up to colour swaps and automorphisms. 
To produce a contradiction, assume the contrary. 
That is, there exists a permutation \( \sigma' \) of colours \( 0,1,\dots,q+2 \) and an automorphism \( \psi' \) of \( G' \) such that \( f'_1(\psi'(x))=\sigma'(f'_2(x)) \) for all \( x\in V(G') \). 
Thus, \( f'_1(\psi'(u_j))=\sigma'(f'_2(u_j)) \) for \( 1\leq j\leq q \), and \( f'_1(\psi'(v_i))=\sigma'(f'_2(v_i)) \) for \( 1\leq i\leq n \). 
For \( 1\leq j\leq q \),\; \( f'_1(\psi'(u_j))=\sigma'(j+2) \) since \( f'_2(u_j)=j+2 \). 
Since \( \psi' \) is an automorphism of \( G' \) and \( U \) is the set of universal vertices in \( G' \), \( \psi' \) maps vertices in \( U \) to vertices in \( U \). 
Thus, for each \( j\in \{1,2,\dots,q\} \), there exists a unique \( \ell\in \{1,2,\dots,q\} \) such that \( \psi'(u_j)=u_\ell \). 
Since \( f'_1(u_\ell)=\ell+2 \) for \( 1\leq \ell\leq q \), this implies that for each \( j\in \{1,2,\dots,q\} \), there exists a unique \( \ell\in \{1,2,\dots,q\} \) such that \( \sigma'(j+2)=\ell+2 \). 
That is, \( \sigma' \) restricted to \( \{3,4,\dots,q+2\} \) is a permutation of \( \{3,4,\dots,q+2\} \). 
Hence, \( \sigma' \) restricted to \( \{0,1,2\} \) is a permutation of \( \{0,1,2\} \). 
Let \( \sigma \) be the restriction of \( \sigma' \) to \( \{0,1,2\} \). 
Let \( \psi=\psi'_{\restriction V(G)} \). 
By Claim~\ref{clm:aut G' restricted to G}, \( \psi \) is an automorphism of \( G \). 
Since \( \sigma=\sigma'_{\restriction \{0,1,2\}} \), \( \psi=\psi'_{\restriction V(G)} \), \( f_\ell={f'_\ell}_{\,\restriction V(G)} \) and \( f_\ell' \) use only colours \( 0,1,2 \) on \( V(G) \) for \( \ell \in \{1,2\} \), we can rewrite ``\( f'_1(\psi'(v_i))=\sigma\,'(f'_2(v_i)) \)'' as ``\( f_1(\psi(v_i))=\sigma(f_2(v_i)) \)'' for \( 1\leq i\leq n \). 
Since \( \sigma \) is a permutation of \( \{0,1,2\} \) and \( \psi \) is an automorphism of \( G \), \( f_1 \) and \( f_2 \) are the same up to colour swaps and automorphisms. 
This is a contradiction. 
Thus, by contradiction, \( f'_1 \) and \( f'_2 \) are not the same up to colour swaps and automorphisms. 
Therefore, \( G' \) admits two \( (q+3) \)-acyclic colourings which are not the same up to colour swaps and automorphisms.

Conversely, suppose that \( G' \) has two \( (q+3) \)-acyclic colourings \( f'_1 \) and \( f'_2 \) that are not the same up to colour swaps and automorphisms (i.e., \( (f'_1,f'_2)\notin \mathcal{R}_{\text{swap+auto}}(G',q+3) \)). 
Let \( 0,1,\dots,q+2 \) be the colours used by \( f'_1 \) and \( f'_2 \). 
For \( \ell \in \{1,2\} \) and \( 1\leq j\leq q \), since \( u_j \) is a universal vertex in \( G' \), \( u_j \) is the only vertex coloured \( f'_\ell(u_j) \) by \( f'_\ell \). 
Without loss of generality, assume that \( f'_1(u_j)=f'_2(u_j)=j+2 \) for \( 1\leq j\leq q \). 
Since each \( u_j\in U \) is a universal vertex in \( G' \), we have \( f'_\ell(v_i)\in \{0,1,2\} \) for \( \ell \in \{1,2\} \) and \( 1\leq i\leq n \). 
For \( \ell \in \{1,2\} \), let \( f_\ell={f'_\ell}_{\,\restriction V(G)} \). 
Clearly, \( f_\ell(v_i)\in \{0,1,2\} \) for \( 1\leq i\leq n \), and \( f_\ell \) is a 3-colouring of \( G \) for \( \ell \in \{1,2\} \). 
For \( \ell \in \{1,2\} \), since \( f_\ell \) is the restriction of an acyclic colouring to \( V(G) \), \( f_\ell \) is an acyclic colouring of \( G \). 
Hence, \( f_\ell \) is a 3-acyclic colouring of \( G \) for \( \ell \in \{1,2\} \). 
We claim that \( f_1 \) and \( f_2 \) are not the same up to colour swaps and automorphisms. 
To produce a contradiction, assume the contrary. 
That is, there exists a permutation \( \sigma \) of colours \( 0,1,2 \) and an automorphism \( \psi \) of \( G \) such that \( f_1(\psi(v_i))=\sigma(f_2(v_i)) \) for \( 1\leq i\leq n \). 
Define a permutation \( \sigma' \) of colours \( 0,1,\dots,q+2 \) as \( \sigma'(c)=\sigma(c) \) for \( 0\leq c\leq 2 \) and \( \sigma'(c)=c \) for \( 3\leq c\leq q+2 \). 
By Claim~\ref{clm:aut G extension to G'}, the extension \( \psi' \) of \( \psi \) into \( V(G') \) defined as \( \psi'(v_i)=\psi(v_i) \) for \( 1\leq i\leq n \) and \( \psi'(u_j)=u_j \) for \( 1\leq j\leq q \) is an automorphism of \( G' \). 
For \( 1\leq j\leq q \),\; \( f'_1(\psi'(u_j))=\sigma'(f'_2(u_j)) \) because \( \psi'(u_j)=u_j \), \( f'_1(u_j)=f'_2(u_j)=j+2 \) and \( \sigma'(j+2)=j+2 \). 
We know that \( f_1(\psi(v_i))=\sigma(f_2(v_i)) \) for \( 1\leq i\leq n \). 
Since \( \psi \), \( f_1 \) and \( f_2 \) are restrictions of \( \psi' \), \( f'_1 \) and \( f'_2 \) respectively to \( V(G) \), and \( \sigma \) is the restriction of \( \sigma' \) to \( \{0,1,2\} \), it follows that \( f'_1(\psi'(v_i))=\sigma'(f_2'(v_i)) \) for \( 1\leq i\leq n \). 
Thus, \( f'_1(\psi'(x))=\sigma'(f'_2(x)) \) for all \( x\in V(G') \). 
That is, \( f'_1 \) and \( f'_2 \) are the same up to colour swaps and automorphisms. 
This is a contradiction. 
Hence, by contradiction, \( f_1 \) and \( f_2 \) are not the same up to colour swaps and automorphisms. 
Therefore, \( G \) admits two 3-acyclic colourings which are not the same up to colour swaps and automorphisms. 
This completes the proof of Claim~\ref{clm:3-acyclic col to q+3 acyclic col}. 
\end{proof}

Let us consider the time complexity of Construction~\ref{make:k-acyclic R_swap_n_auto}. 
Let \( m=|E(G)| \). 
Clearly, \( G' \) has only \( n+q \) vertices and \( m+nq+\binom{q}{2}\leq m+nq+q^2 \) edges. 
Since \( q \) is fixed, Construction~\ref{make:k-acyclic R_swap_n_auto} requires only time polynomial in the input size. 
By Corollary~\ref{cor:3-acyclic R_swap_n_auto no uni}, \textsc{Another 3-Acyclic Colouring}\,\( [\mathcal{R}_{\text{swap+auto}}] \) is NP-complete for graphs without any universal vertex. 
For each \( q\in \mathbb{N} \), Construction~\ref{make:k-acyclic R_swap_n_auto} establishes a reduction from this problem to \textsc{Another \( (q+3) \)-Acyclic Colouring}\,\( [\mathcal{R}_{\text{swap+auto}}] \). 
Thus, we have the following.
\begin{theorem}\label{thm:k-acyclic R_swap_n_auto k>=4}
For \( k\geq 4 \), \textsc{Another \( k \)-Acyclic Colouring}\,\( [\mathcal{R}_{\text{swap+auto}}] \) is NP-complete, and thus \textsc{Unique \( k \)-Acyclic Colouring}\,\( [\mathcal{R}_{\text{swap+auto}}] \) is coNP-hard.
\qed
\end{theorem}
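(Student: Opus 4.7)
The plan is to use Construction~\ref{make:k-acyclic R_swap_n_auto} as a polynomial-time reduction. For each fixed $k\geq 4$, I would set $q=k-3\geq 1$ and reduce from \textsc{Another 3-Acyclic Colouring}$\,[\mathcal{R}_{\text{swap+auto}}]$ on graphs without any universal vertex, which is NP-complete by Corollary~\ref{cor:3-acyclic R_swap_n_auto no uni}, to \textsc{Another $k$-Acyclic Colouring}$\,[\mathcal{R}_{\text{swap+auto}}]$.

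Given an input $(G,f)$ for the source problem, where $f\colon V(G)\to\{0,1,2\}$ is a 3-acyclic colouring, I would produce the join $G'=G+K_q$ with new $q$-clique vertices $u_1,\dots,u_q$ via Construction~\ref{make:k-acyclic R_swap_n_auto} and extend $f$ to a $k$-colouring $f'$ of $G'$ by setting $f'(v)=f(v)$ for $v\in V(G)$ and $f'(u_j)=j+2$ for $1\leq j\leq q$. Because the universal vertices $u_j$ use pairwise distinct fresh colours, every cycle in $G'$ that touches any $u_j$ is at least tricoloured by $f'$; the remaining cycles lie entirely in $V(G)$ and are not bicoloured since $f$ is a 3-acyclic colouring. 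Hence $f'$ is a $k$-acyclic colouring of $G'$, and since $q=k-3$ is a constant, the transformation $(G,f)\mapsto(G',f')$ runs in polynomial time.

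The key step is invoking the guarantee of Construction~\ref{make:k-acyclic R_swap_n_auto}, which through its Claim~3 (traced through the specific extension I use to pair $f$ with $f'$) shows that $(G,f)$ is a yes-instance of \textsc{Another}$\,[\mathcal{R}_{\text{swap+auto}}]$ if and only if $(G',f')$ is. This gives NP-hardness; NP membership is inherited from the analogous arguments used for \textsc{Another 3-Acyclic Colouring}$\,[\mathcal{R}_{\text{swap+auto}}]$ in Theorem~\ref{thm:3-acyclic R_swap max degree 4}. Finally, the coNP-hardness of \textsc{Unique $k$-Acyclic Colouring}$\,[\mathcal{R}_{\text{swap+auto}}]$ follows immediately: the graph $G'$ always admits the colouring $f'$, so $G'$ has exactly one $\mathcal{R}_{\text{swap+auto}}$-class of $k$-acyclic colourings precisely when $(G',f')$ is a no-instance of \textsc{Another}, and so the map $(G,f)\mapsto G'$ reduces the complement of \textsc{Another}$\,[\mathcal{R}_{\text{swap+auto}}]$ to \textsc{Unique}$\,[\mathcal{R}_{\text{swap+auto}}]$.

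The main point to watch is that the join operation may create automorphisms of $G'$ with no counterpart in $G$, potentially collapsing distinct classes of colourings and breaking the bijection between classes. I rely on Claims~1 and~2 inside the guarantee proof of Construction~\ref{make:k-acyclic R_swap_n_auto}, which establish that every automorphism of $G'$ permutes the universal vertices among themselves and restricts to an automorphism of $G$; this prevents any such spurious collapse. Beyond this book-keeping, no serious obstacle is anticipated.
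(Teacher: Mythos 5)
Your proposal is correct and follows essentially the same route as the paper: apply Construction~\ref{make:k-acyclic R_swap_n_auto} with \( q=k-3 \) to reduce from \textsc{Another 3-Acyclic Colouring}\,\( [\mathcal{R}_{\text{swap+auto}}] \) on graphs without a universal vertex (Corollary~\ref{cor:3-acyclic R_swap_n_auto no uni}), relying on the construction's guarantee and its internal claims about automorphisms of the join. Your explicit extension of the witness colouring \( f \) to \( f' \) on the clique vertices is a detail the paper leaves implicit, but it does not change the argument.
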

We have the following theorem by combining Theorem~\ref{thm:3-acyclic R_swap max degree 4} and Theorem~\ref{thm:k-acyclic R_swap_n_auto k>=4}. 
\begin{theorem}\label{thm:k-acyclic R_swap_n_auto}
For \( k\geq 3 \), \textsc{Another \( k \)-Acyclic Colouring}\,\( [\mathcal{R}_{\text{swap+auto}}] \) is NP-complete, and thus \textsc{Unique \( k \)-Acyclic Colouring}\,\( [\mathcal{R}_{\text{swap+auto}}] \) is coNP-hard.
\qed
\end{theorem}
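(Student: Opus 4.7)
The plan is a straightforward aggregation: the statement splits cleanly along the boundary $k=3$ versus $k\geq 4$, and both cases have already been handled. For $k=3$, Theorem~\ref{thm:3-acyclic R_swap max degree 4} gives that \textsc{Another 3-Acyclic Colouring}\,$[\mathcal{R}_{\text{swap+auto}}]$ is NP-complete even when restricted to bipartite graphs of maximum degree~4, so in particular it is NP-complete on the class of all graphs. For $k\geq 4$, Theorem~\ref{thm:k-acyclic R_swap_n_auto k>=4} supplies NP-completeness directly (obtained via Construction~\ref{make:k-acyclic R_swap_n_auto}, which reduces the $k=3$ case on graphs with no universal vertex to the $(q+3)$-coloured case by joining $K_q$). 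The union of these two ranges covers all $k\geq 3$, so the NP-completeness half of the statement follows immediately.

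For the coNP-hardness of \textsc{Unique $k$-Acyclic Colouring}\,$[\mathcal{R}_{\text{swap+auto}}]$, the plan is to use the standard bridge between an ``another-solution'' problem and its ``unique-solution'' counterpart. Given an instance $(G,f)$ of \textsc{Another $k$-Acyclic Colouring}\,$[\mathcal{R}_{\text{swap+auto}}]$, where $f$ is a $k$-acyclic colouring of $G$, the answer is \emph{yes} precisely when $G$ has strictly more than one equivalence class under $\mathcal{R}_{\text{swap+auto}}(G,k)$, i.e., precisely when $G$ is a \emph{no}-instance of \textsc{Unique $k$-Acyclic Colouring}\,$[\mathcal{R}_{\text{swap+auto}}]$. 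Thus the map $(G,f)\mapsto G$ is a polynomial-time reduction from \textsc{Another $k$-Acyclic Colouring}\,$[\mathcal{R}_{\text{swap+auto}}]$ to the complement of \textsc{Unique $k$-Acyclic Colouring}\,$[\mathcal{R}_{\text{swap+auto}}]$, so the latter is coNP-hard.

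There is no genuine obstacle to overcome here, since the entire construction-level machinery (the chain gadget, Construction~\ref{make:3-acyclic R_swap_n_auto}, and the graph-join trick in Construction~\ref{make:k-acyclic R_swap_n_auto}) was already carried out in the proofs of Theorems~\ref{thm:3-acyclic R_swap max degree 4} and~\ref{thm:k-acyclic R_swap_n_auto k>=4}. The proof therefore reduces to two lines: quote both theorems and invoke the standard ``another vs.\ unique'' complementation, closing with \qed.
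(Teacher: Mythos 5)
Your proposal is correct and matches the paper's argument exactly: the paper obtains this theorem by combining Theorem~\ref{thm:3-acyclic R_swap max degree 4} (the case \( k=3 \)) with Theorem~\ref{thm:k-acyclic R_swap_n_auto k>=4} (the case \( k\geq 4 \)), with the coNP-hardness of the unique-solution version following by the standard complementation you describe (already built into those two theorems' statements). Nothing further is needed.
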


\iftoggle{forThesis}
{}{\section{Open Problems and Related Works}\label{sec:acyclic open}
Many problems related to acyclic colouring are open even for the class of cubic graphs. Gr\"{u}nbaum~\cite{grunbaum} proved that cubic graphs are 4-acyclic colourable (see also \cite{skulrattanakulchai}). So, \( 3\leq \chi_a(G)\leq 4 \) for every cubic graph \( G \) which is not a forest. Yet, it is unknown whether we can distinguish between the cases \( \chi_a(G)=3 \) and \( \chi_a(G)=4 \) in polynomial time.
\begin{problem}[\cite{angelini_frati}]
What is the complexity of \textsc{3-Acyclic Colourability} in cubic graphs?
\end{problem}
We know that there are infinitely many cubic graphs that are 3-acyclic colourable and infinitely many that are not \cite[Lemma~2]{angelini_frati}. 
Cheng et al.~\cite{cheng} designed a polynomial-time algorithm that finds an optimal acyclic colouring of a subcubic claw-free graph. 
They also proved that there are exactly three subcubic claw-free graphs that require four colours for acyclic colouring. 
Zhang and Bylka~\cite{zhang_bylka} proved that every cubic line graph except \( K_4 \) is 3-acyclic colourable. 
According to a conjecture of Zhu et al.~\cite{zhu2016b}, \textsc{3-Acyclic Colourability} is in P when restricted to cubic planar 3-connected graphs. 
\begin{conjecture}[\cite{zhu2016b}]\label{conj:zhu's}
Every cubic planar 3-connected graph except \( K_4 \), \( Q_3 \) and the dual graph of \( P_4 \)\raisebox{-2pt}{\scalebox{0.8}{\rotatebox{45}{\( \boxtimes \)}}}\( K_2 \) \( ( \)see Figure~\ref{fig:3rd in zhu's conjecture}\( ) \) is 3-acyclic colourable. 
\end{conjecture}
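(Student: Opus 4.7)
The plan is to attempt the conjecture by strong induction on $n = |V(G)|$, where the base cases are verified by direct enumeration: every cubic planar 3-connected graph of order at most some threshold $N_0$ (chosen so that it comfortably contains the three claimed exceptions) is checked computationally, and the three exceptions are confirmed to be the only non-3-acyclic-colourable graphs in that range. For the inductive step, Euler's formula applied to cubic planar graphs gives $\sum_F (\deg(F) - 6) = -12$, so every cubic planar graph (and hence $G$) has at least one face of length at most $5$. I would split the inductive step into cases according to whether $G$ has a triangular, quadrilateral, or pentagonal face, and in each case define a local reduction that produces a smaller cubic planar 3-connected graph $G'$.

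In the triangle case, let $F = abc$ be a 3-face with outer neighbours $a', b', c'$. Contract $F$ to a single vertex $v$ adjacent to $a', b', c'$, obtaining a cubic planar graph $G'$ on $n - 2$ vertices. The 3-connectivity of $G$ together with $G \neq K_4$ forces $a', b', c'$ to be distinct, and a standard cut-argument shows $G'$ remains 3-connected. The inductive hypothesis yields a 3-acyclic colouring $f'$ of $G'$ (provided $G'$ is not an exception); extend $f'$ to $G$ by choosing the unique rainbow colouring of $\{a,b,c\}$ that matches $f'$ on $a', b', c'$ in the sense $f(a) \neq f'(a')$, $f(b) \neq f'(b')$, $f(c) \neq f'(c')$. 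The 4-face and 5-face cases follow the same template: identify a local operation (edge-contraction along a diagonal for quadrilaterals; a more intricate two-vertex reduction for pentagons) that produces a smaller cubic planar 3-connected graph to which the inductive hypothesis applies, and extend the colouring locally.

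Two structural subtleties arise. First, a reduction may land on one of the three exceptional graphs; since this forces $n$ to be bounded, such finitely many cases can be enumerated and handled in the base step. Second, a given small face may fail to yield a 3-connected $G'$ (for instance, when two triangles share an edge, or when a 4-face enters an internal separator); a discharging argument assigning initial charge $\deg(F) - 6$ to each face and redistributing via local rules should establish that at least one \emph{reducible} small face always exists, thereby covering every $G$.

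The hard part, almost certainly, is the colouring extension step. A 3-acyclic colouring is constrained globally by the no-bicoloured-cycle condition, and a locally-correct extension around the removed face may inadvertently complete a bicoloured cycle threading through $G'$. To manage this, I would exploit a chain-gadget-style analysis in the spirit of Lemma~\ref{lem:acyclic colouring chain gadget properties}: any new bicoloured cycle must cross the reinserted face, and the freedom to choose which cyclic rainbow to place on the face (for triangles, three orientation-compatible rotations; for 4- and 5-faces, correspondingly more options) should provide enough flexibility to avoid every bad configuration. A secondary obstacle is iterated encounter with the exceptional graphs during induction, which may force allowing a multi-face reduction in a single step or a more delicate discharging scheme that trades off the three small face types against each other.
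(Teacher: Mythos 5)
The statement you are trying to prove is Conjecture~\ref{conj:zhu's}, an open conjecture attributed to Zhu et al.\ \cite{zhu2016b}; the paper does not prove it, it merely records it in the open-problems section. So there is no proof in the paper to compare against, and your proposal must stand on its own as a complete argument. It does not: it is an outline whose central difficulty is explicitly deferred. You yourself identify the colouring-extension step as ``the hard part, almost certainly,'' and resolve it only with the hope that the available rotations of a rainbow on the reinserted face ``should provide enough flexibility to avoid every bad configuration.'' That is precisely the step where such inductions on acyclic colouring break down: a bicoloured cycle is a global object, and after reinserting a triangle \( abc \) a cycle of the form \( a,b,b',\dots,a',a \) can be bicoloured in \( G \) even though the corresponding closed walk through the contracted vertex \( v \) in \( G' \) was not bicoloured under \( f' \) (the colours involved are \( f(a) \) and \( f(b) \), neither of which need equal \( f'(v) \)). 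Nothing in the inductive hypothesis controls which pairs of external neighbours are joined by paths bicoloured in a given colour pair, so the finitely many local choices cannot be shown to dodge all of them without substantial additional machinery. The analogy with Lemma~\ref{lem:acyclic colouring chain gadget properties} does not help here: that lemma exploits a rigid gadget with an essentially unique colouring, whereas your \( G' \) is arbitrary.

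There are also concrete defects in the reductions themselves. For a quadrilateral face \( abcd \) of a cubic graph, \( a \) and \( c \) are non-adjacent, so ``edge-contraction along a diagonal'' either contracts a non-edge (identifying \( a \) and \( c \) yields a vertex of degree~4, destroying cubicity) or is undefined; the pentagon reduction is not specified at all. The discharging argument that is supposed to guarantee a \emph{reducible} small face is asserted, not carried out, and the base case rests on an unperformed computation up to an unspecified \( N_0 \). Finally, the possibility that a reduction lands on \( K_4 \), \( Q_3 \), or the dual of \( P_4 \)\raisebox{-2pt}{\scalebox{0.8}{\rotatebox{45}{\( \boxtimes \)}}}\( K_2 \) does not force \( n \) to be bounded in the way you claim for all three face types, since the reduced graph has \( n-2 \) (or fewer) vertices only relative to the particular reduction used, and the exceptional outcomes must be handled by exhibiting an alternative reducible face, which again requires the unproved discharging claim. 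In short, this is a plausible programme for attacking the conjecture, not a proof of it.
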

\begin{figure}[hbt]
\centering
\begin{tikzpicture}[scale=0.5]
\coordinate (origin);
\draw (origin) node(a)[bigdot]{}--++(-1,-1.5) node(b)[bigdot]{}--++(2,0) node(c)[bigdot]{}--(a);
\draw
(b)--+(0,-2) node(d)[bigdot]{}
(c)--+(0,-2) node(e)[bigdot]{};
\draw (d)--(e);
\draw
(a)--+(90:3) node(f)[bigdot]{}
(d)--+(-150:3) node(g)[bigdot]{}
(e)--+(-30:3) node(h)[bigdot]{};
\draw (f)--(g)--(h)--(f);
\end{tikzpicture}

\caption{The dual graph of \( P_4 \)\raisebox{-2pt}{\scalebox{0.8}{\rotatebox{45}{\( \boxtimes \)}}}\( K_2 \), where \raisebox{-2pt}{\scalebox{0.8}{\rotatebox{45}{\( \boxtimes \)}}} denotes the graph join operation.}
\label{fig:3rd in zhu's conjecture}
\end{figure}

Regarding the class of bounded-degree graphs, the problem of determining the exact value of \( L_a^{(k)} \) for each \( k\geq 3 \) remains open. 
For \( k\geq 4 \), finding the value of \( L_a^{(k)} \) suffices to characterise the values \( d \) for which \textsc{\( k \)-Acyclic Colourability} in \( d \)-regular graphs is NP-complete (see Theorem~\ref{thm:k acyclic regular}). 

\section{Acknowledgement}
We thank  Emil Je\v{r}\'{a}bek for his valuable comments. }

 \bibliographystyle{plain}

\end{document}